\documentclass[a4paper,11pt]{amsart}
\usepackage{amsmath,amsfonts,amsthm,amssymb,enumerate}
\usepackage{graphicx}
\usepackage{comment}
\usepackage{mathtools}
\usepackage{tikz-cd}
\usepackage[setpagesize=false]{hyperref}

\numberwithin{equation}{section}

\newtheorem{theorem}{Theorem}[section]
\newtheorem{corollary}[theorem]{Corollary}
\newtheorem{proposition}[theorem]{Proposition}

\newtheorem{lemma}[theorem]{Lemma}

{
\theoremstyle{definition}
\newtheorem{definition}[theorem]{Definition}
\newtheorem{example}[theorem]{Example}
\newtheorem{remark}[theorem]{Remark}
}

\title{Chirality of torus-covering $T^2$-links of degree three}

\author{Hohto Bekki}
\address{Department of Mathematics, Information Science and  Engineering,  
Saga University,  
1 Honjomachi, Saga, 840-8502, Japan.  
}
\email{bekki@cc.saga-u.ac.jp}

\author{Teruhisa Kadokami}
\address{School of Mechanical Engineering,
College of Science and Engineering,
Kanazawa University,
Kakuma-machi, Kanazawa, Ishikawa, 920-1192, Japan}
\email{kadokami@se.kanazawa-u.ac.jp}

\author{Inasa Nakamura}
\address{Department of Mathematics, Information Science and  Engineering,  
Saga University,  
1 Honjomachi, Saga, 840-8502, Japan.  
}
\email{inasa@cc.saga-u.ac.jp}

\subjclass[2020]{Primary: 57K45, Secondary: 57M05,57K12}
\keywords{surface-links; 2-dimensional braids; knots; braids; triple linking number; linking number; quandle; quandle cocycle invariant; Gauss sum}

\begin{document}  
\begin{abstract}
A torus-covering $T^2$-link of degree $n$ is a surface-link consisting of tori, in the form of an unbranched covering of degree $n$ over the standard torus. We focus on a torus-covering $T^2$-link of degree 3, which is determined by a pair $(a,b)$ of 3-braids satisfying $ab=ba$, denoted by $\mathcal{S}_3(a,b)$. 
We investigate to what extent  the chirality of $\mathcal{S}_3(a,b)$ is detected by invariants such as the triple linking numbers, the number of Fox $p$-colorings, and the quandle cocycle invariant associated with $p$-colorings. In particular, we determine the quandle cocycle invariant for $\mathcal{S}_3(a,b)$ associated with tri-colorings. 
\end{abstract}
\maketitle

\section{Introduction}\label{sec1}

A surface-link is the image of a smooth embedding of a closed surface into the Euclidean 4-space $\mathbb{R}^4$. In this paper, classical links/braids and surface-links are smooth and oriented. 
We treat a certain type of surface-link, called torus-covering $T^2$-links. 
A {\it $T^2$-link} is a surface-link each of whose components is an embedded torus.
A {\it torus-covering $T^2$-link of degree $n$} is a $T^2$-link determined by a pair of commuting $n$-braids $(a,b)$, i.e., satisfying $ab=ba$, called basis $n$-braids, where $n$ is a positive integer.
We denote by $\mathcal{S}_n(a,b)$ the torus-covering $T^2$-link of degree $n$ with basis $n$-braids $(a,b)$. 
The aim of this paper is to investigate the chirality of $\mathcal{S}_n(a,b)$, especially for the case $n=3$. 

Let $F=\mathcal{S}_n(a,b)$. 
First we observe the presentation of the orientation-reversal of $-F$, the mirror image $F^*$, and the orientation-reversed mirror image $-F^*$ of $F$, in terms of basis $n$-braids. Let $\sigma_i$ $(i=1, \ldots, n-1)$ be the $i$-th standard generator of the $n$-braid group. We denote by $e$ and $\tilde{\Delta}$ the trivial $n$-braid and a full twist $(\sigma_1 \sigma_2 \cdots \sigma_{n-1})^n$ of $n$ parallel strings, respectively. 
For an $n$-braid $c$, we denote by $-c$, $c^*$ and $-c^*$ the orientation-reversal, the mirror image, and the orientation-reversed mirror image of $c$, respectively; see Lemma \ref{lemma3-3}.

\begin{theorem}\label{theorem3-2}
Let $(a,b)$ be any $n$-braids which commute. Then we have the following:

\begin{align}
 &-\mathcal{S}_n(a,b) \sim \mathcal{S}_n(-a,b^*)\sim\mathcal{S}_n(a^*, -b), \label{eq3-1}\tag{1}\\
 &\mathcal{S}_n(a,b)^* \sim \mathcal{S}_n(a^*,b^*)\sim\mathcal{S}_n(-a, -b), \label{eq3-2}\tag{2}\\
& -\mathcal{S}_n(a,b)^* \sim \mathcal{S}_n(-a^*,b)\sim\mathcal{S}_n(a, -b^*). \label{eq3-3}\tag{3}
\end{align}
In particular, if $a = -a$, then, for any integer $m$, 
\begin{align}
\mathcal{S}_n(a,\tilde{\Delta}^{m}) \sim \mathcal{S}_n(-a,-\tilde{\Delta}^{m})\sim-\mathcal{S}_n(a, \tilde{\Delta}^{-m}).  \label{eq3-4}\tag{4}
\end{align}
\end{theorem}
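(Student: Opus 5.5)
I will work directly from the construction of $\mathcal{S}_n(a,b)$. Let $T=S^1\times S^1\subset\mathbb{R}^4$ be the standard torus with tubular neighbourhood $N(T)=D^2\times S^1\times S^1$, and regard $I^2$ as a fundamental domain with coordinates $(s,t)$. Then $\mathcal{S}_n(a,b)$ is obtained from $Q_n\times I\times I\subset D^2\times I\times I$, where $Q_n$ is a fixed set of $n$ interior points, by closing up. Along the first $S^1$-factor the points follow the braid $a$, along the second they follow $b$, and $ab=ba$ guarantees that the braided surface over $\partial I^2$ closes consistently.

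Each operation on $F$ will be realised by a self-diffeomorphism of $\mathbb{R}^4$ or of $N(T)$ that maps the standard torus to itself, and I will then read off what happens to the two basis braids. Reversing the orientation of $F$ amounts to reversing the orientation of the base torus. I can do this by reversing the first circle factor $s\mapsto -s$. This reverses the direction in which $a$ is traversed but keeps the $D^2$-fibre, giving $a\mapsto -a$. To stay in $\mathbb{R}^4$ with ambient orientation preserved, I must simultaneously reflect one coordinate of the $D^2$ fibre. That reflection sends every braid to its mirror image, so the net effect is $(a,b)\mapsto(-a^*, b^*)$. Combining with $(-a)^*=-(a^*)$ and with the rule of Lemma~\ref{lemma3-3} (reversal of a braid word with inverted exponents), I then adjust which factor carries the reversal.

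This gives the following pattern, to be checked by writing each map explicitly:
\begin{itemize}
\item reflecting $D^2$ only (mirror of $F$, orientation of $F$ kept) gives $(a^*,b^*)$;
\item the reflection $s\mapsto -s$ composed with the $s\mapsto -s$ coordinate of $\mathbb{R}^4$ gives $-a$;
\item and so on for the remaining combinations.
\end{itemize}

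To obtain the two equivalent forms in each line, I will use one extra equivalence: $\mathcal{S}_n(a,b)\sim\mathcal{S}_n(-a^*,-b^*)$. This is realised by reversing both circle factors, which preserves the orientation of the torus, combined with rotating the $D^2$-fibre by $\pi$ in the plane spanned by one fibre direction and the radial direction of the embedding of $T$. I expect this identity to be the main obstacle: one must verify precisely how the standard embedding $T\subset\mathbb{R}^4$ extends these reflections to ambient diffeomorphisms, and with which orientation signs. My first attempt will be to embed $T$ as $\{|z_1|=|z_2|=1\}\subset\mathbb{C}^2$, thickened, and use complex conjugations $z_j\mapsto\bar z_j$, whose action on the normal $D^2$ (the radial directions) is explicit. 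Once this identity is in hand, applying it to the first form of each line of (1)--(3) yields the second form.

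Finally, (4) follows by specialisation. Set $b=\tilde\Delta^m$ and use $a=-a$ together with the facts $-\tilde\Delta^m=\tilde\Delta^m$, since $\tilde\Delta$ is a palindromic word up to the Garside relation, and $(\tilde\Delta^m)^*=\tilde\Delta^{-m}$. Then (2) gives $\mathcal{S}_n(a,\tilde\Delta^m)\sim\mathcal{S}_n(-a,-\tilde\Delta^m)$ up to mirror, and (1) with $b^*=\tilde\Delta^{-m}$ gives $-\mathcal{S}_n(a,\tilde\Delta^{-m})\sim\mathcal{S}_n(-a,\tilde\Delta^{m})$. Matching these two equivalences yields the chain in (4).
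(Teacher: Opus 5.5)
Your plan has the same architecture as the paper's proof. You use explicit self-maps of $\mathbb{R}^4$ preserving $T$, and then (E1), $\mathcal{S}_n(a,b)\sim\mathcal{S}_n(a^{-1},b^{-1})=\mathcal{S}_n(-a^*,-b^*)$, to pass to the second form in each line. Your treatment of the mirror image, giving $(a^*,b^*)$, is correct. The gap is in the rule you use for the orientation reversal, which is wrong.

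Reversing the $\mathbf{m}$-circle while keeping the $D^2$-fibre fixed reverses the loop in configuration space. Applied to $-F$, it therefore turns the $\mathbf{m}$-braid into $\widehat{a^{-1}}$, so the effect is $a\mapsto a^{-1}=-a^*$, not $a\mapsto -a$. The braid $-a$ (whose closure is $-\hat a$) arises only when the circle reversal is combined with a reflection of one fibre coordinate. This is exactly why (E1) holds: $(x,y,z,t)\mapsto(x,-y,-z,t)$ reverses both circles, fixes the fibre coordinates $(r,t)$, and yields $(a^{-1},b^{-1})$, with no rotation of the fibre needed. Under your rule the same double reversal would give $(-a,-b)$ (a fibre rotation by $\pi$ does not mirror anything). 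By (2) that pair represents $F^*$, not $F$, so your two computations contradict each other. With the correct rule, the orientation-preserving map for $-F$ is for instance $(x,y,z,t)\mapsto(x,y,-z,-t)$: it reverses $\mathbf{m}$, fixes $\mathbf{l}$ and reflects $t$. It gives $((a^{-1})^*,b^*)=(-a,b^*)$, which is (1). Your $(-a^*,b^*)$ applies the mirror to $a$ twice.

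This cannot be fixed by ``adjusting which factor carries the reversal'' afterwards, because $(-a^*,b^*)=(a^{-1},b^*)$ is in general not even a commuting pair. For $a=b=\sigma_1\sigma_2$ it is $(\sigma_2^{-1}\sigma_1^{-1},\sigma_1^{-1}\sigma_2^{-1})$, and these do not commute (already $A_{\sigma_1\sigma_2}$ and $A_{\sigma_2\sigma_1}$ fail to commute). So $\mathcal{S}_n(-a^*,b^*)$ need not exist, and applying (E1) to it would give $(a,-b)$, which is not in line (1) either. As written, (1) and hence (3) are not established. The items ``the reflection $s\mapsto -s$ composed with the $s\mapsto -s$ coordinate'' and ``and so on'' do not specify maps whose effect can be read off. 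No $\mathbb{C}^2$ model is needed: in the paper's coordinates all three maps are sign changes of coordinates.

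For (4), the first equivalence is a literal equality of pairs, since $-a=a$ and $-\tilde\Delta^m=\tilde\Delta^m$. Invoking (2) ``up to mirror'' would relate $\mathcal{S}_n(-a,-\tilde\Delta^m)$ to $\mathcal{S}_n(a,\tilde\Delta^m)^*$, not to $\mathcal{S}_n(a,\tilde\Delta^m)$. Your second step, applying (1) to $(a,\tilde\Delta^{-m})$, is correct and is the paper's argument.
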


We investigate invariants such as the triple linking numbers, the number of $p$-colorings and the quandle cocycle invariant associated with $p$-colorings, where $p$ is an odd prime. 

For a braid $a$, the {\it closure} of $a$, or the {\it closed braid} $\hat{a}$, is the link obtained from $a$ by connecting each $i$-th initial point and $i$-th terminal point by a trivial arc. 
For a surface-link $F$ with equal to or more than three components, 
the triple liking number $\mathrm{Tlk}_{i,j,k}(F)$ $(i \neq j, j \neq k)$ is an invariant of $F$ defined as the total sum of the number of positive triple points of type $(i,j,k)$ minus the number of negative triple points of type $(i,j,k)$; see Section \ref{sec4}. 
We consider three-component $\mathcal{S}_3(a,b)$, which is given by pure 3-braids $a$ and $b$. 
For each $i=1,2,3$, we define the $i$-th component of $\hat{a}$, $\hat{b}$, and $\mathcal{S}_3(a,b)$ to be the component corresponding to the $i$-th strand of $a$ and $b$. 
Then the triple linking numbers are determined from the linking numbers of $\hat{a}$ and $\hat{b}$ (Theorem \ref{thm-tlk}). Using Theorem \ref{thm-tlk}, we have the following corollary. We denote by $\mathrm{Lk}_{i,j}(L)$ the linking number between the $i$-th and $j$-th components of a classical link $L$, and we say that $L$ has {\it non-trivial linking numbers} if $\mathrm{Lk}_{i,j}(L)\neq 0$ for some $i,j$. A surface-link $F$ is said to be {\it reversible} (respectively, {\it $(-)$-amphicheiral}) if $F$ is equivalent to $-F$ (respectively, $-F^*$).

\begin{corollary}\label{cor1-2}
Let $(a, b)$ be pure $3$-braids which commute, such that the closures $\hat{a}$ and $\hat{b}$ have non-trivial linking numbers, and for any given real number $\lambda \neq 0$, $\mathrm{Lk}_{i,j}(\hat{a})\neq \lambda \cdot \mathrm{Lk}_{i,j}(\hat{b})$ for some $i,j \in \{1,2,3\}$, and for any $i,j,k$ with $\{i,j,k\}=\{1,2,3\}$, $\mathrm{Lk}_{i,j}(\hat{a})\neq \mathrm{Lk}_{j,k}(\hat{a})$ or $\mathrm{Lk}_{i,j}(\hat{b})\neq \mathrm{Lk}_{j,k}(\hat{b})$. 
Then $\mathcal{S}_3(a,b)$ is neither reversible nor $(-)$-amphicheiral. 
\end{corollary}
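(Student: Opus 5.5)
The plan is to compare the triple linking numbers of $F=\mathcal{S}_3(a,b)$ with those of $-F$ and $-F^*$, using Theorem \ref{theorem3-2} to present these as torus-covering links and Theorem \ref{thm-tlk} to compute their triple linking numbers from linking numbers of closed $3$-braids. I am assuming Theorem \ref{thm-tlk} has the bilinear, antisymmetric shape I expect, namely
\[
\mathrm{Tlk}_{i,j,k}(\mathcal{S}_3(a,b))=\pm\bigl(\mathrm{Lk}_{i,j}(\hat a)\mathrm{Lk}_{j,k}(\hat b)-\mathrm{Lk}_{j,k}(\hat a)\mathrm{Lk}_{i,j}(\hat b)\bigr),
\]
with $\mathrm{Tlk}_{i,j,k}$ depending only on the pair of edges $\{i,j\},\{j,k\}$ of the triangle on $\{1,2,3\}$. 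If the actual formula differs, the same strategy applies with that formula in place of this one.

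First I record how linking numbers of closed braids change. Reversing the orientation of every component of $\hat c$, which gives $\widehat{-c}$, leaves every $\mathrm{Lk}_{i,j}$ unchanged. Taking the mirror image $c^*$ negates every $\mathrm{Lk}_{i,j}$. Hence $\widehat{-c^*}$ also has all linking numbers negated. The $i$-th strand stays the $i$-th strand in each case.

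Next, by (\ref{eq3-1}) we have $-F\sim\mathcal{S}_3(-a,b^*)$, and by (\ref{eq3-3}) we have $-F^*\sim\mathcal{S}_3(a,-b^*)$. In both cases the linking numbers of the first braid are unchanged and those of the second braid are negated. Since the formula is bilinear, this gives $\mathrm{Tlk}_{i,j,k}(-F)=\mathrm{Tlk}_{i,j,k}(-F^*)=-\mathrm{Tlk}_{i,j,k}(F)$ for all $i,j,k$.

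Now suppose $F\sim -F$ or $F\sim -F^*$. An equivalence may permute the components by some $\tau\in S_3$, so we would get $\mathrm{Tlk}_{\tau(i),\tau(j),\tau(k)}(F)=-\mathrm{Tlk}_{i,j,k}(F)$ for all $i,j,k$. Write $u=(\mathrm{Lk}_{1,2},\mathrm{Lk}_{2,3},\mathrm{Lk}_{1,3})(\hat a)$ and $v$ for the same vector for $\hat b$. The triple linking numbers are then the $2\times2$ minors of the $2\times 3$ matrix with rows $u$ and $v$, indexed by pairs of edges sharing a vertex, with signs.

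The hypothesis that $\hat a$ and $\hat b$ have nontrivial linking numbers, together with the hypothesis $\mathrm{Lk}(\hat a)\neq\lambda\,\mathrm{Lk}(\hat b)$ for every $\lambda\ne0$, should say that $u$ and $v$ are linearly independent. The one case this does not literally cover is when one of $u,v$ is zero; I would have to check whether the intended reading of "nontrivial linking numbers" excludes it. Given independence, not all minors vanish, so $F$ has some nonzero triple linking number.

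Case $\tau=\mathrm{id}$: every Tlk would equal its own negative, hence be zero. This is a contradiction.

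Case $\tau\neq\mathrm{id}$: $\tau$ permutes the edges of the triangle, so $\tau$ permutes the columns of the matrix. I would work out the sign behaviour of the minors under each transposition and each $3$-cycle. The expected outcome is that $\mathrm{Tlk}\circ\tau=-\mathrm{Tlk}$ with $\mathrm{Tlk}\neq 0$ forces $\tau$ to be a transposition fixing some index $j$, and forces the columns for the edges $\{i,j\}$ and $\{j,k\}$ to coincide. That means $\mathrm{Lk}_{i,j}(\hat a)=\mathrm{Lk}_{j,k}(\hat a)$ and $\mathrm{Lk}_{i,j}(\hat b)=\mathrm{Lk}_{j,k}(\hat b)$, which is excluded by the last hypothesis.

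The main obstacle is this permutation case analysis. It needs the exact signs in Theorem \ref{thm-tlk}, and the symmetry relations among the $\mathrm{Tlk}_{i,j,k}$, such as how $\mathrm{Tlk}_{i,j,k}$ relates to $\mathrm{Tlk}_{k,j,i}$. I would first tabulate the three independent minors $D_1,D_2,D_3$ and how each element of $S_3$ acts on them (by permutation and sign). Then I would solve $D\circ\tau=-D$ case by case. For $3$-cycles I expect this to force $D=0$. For transpositions I expect it to force the column coincidence above.
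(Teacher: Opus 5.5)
\textbf{Verdict: the transposition step fails, so there is a genuine gap.} Your route is the paper's route. Theorem~\ref{thm7-1} gives $\mathrm{Tlk}(-F)=\mathrm{Tlk}(-F^*)=-\mathrm{Tlk}(F)$, and the corollary is the contrapositive of Theorem~\ref{thm7-2}, which allows a relabeling $\tau$ of the components. Your reading of the hypotheses as linear independence of the two linking vectors is also the paper's reading.

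Your identity case is correct, and so is your $3$-cycle case: the equations force $\mathrm{Tlk}_{1,2,3}=-\mathrm{Tlk}_{2,3,1}=\mathrm{Tlk}_{3,1,2}=-\mathrm{Tlk}_{1,2,3}$, so everything vanishes, which contradicts independence. The transposition case, however, is not merely unfinished; the outcome you expect is false. Suppose $\tau$ swaps $i,k$ and fixes $j$. Using $\mathrm{Tlk}_{z,y,x}=-\mathrm{Tlk}_{x,y,z}$, the system $\mathrm{Tlk}_{\tau(x),\tau(y),\tau(z)}(F)=-\mathrm{Tlk}_{x,y,z}(F)$ collapses to the single equation $\mathrm{Tlk}_{k,i,j}(F)=\mathrm{Tlk}_{j,k,i}(F)$. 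By Theorem~\ref{thm-tlk} this reads
\[
\bigl(\mathrm{Lk}_{i,j}(\hat a)+\mathrm{Lk}_{j,k}(\hat a)\bigr)\mathrm{Lk}_{i,k}(\hat b)=\bigl(\mathrm{Lk}_{i,j}(\hat b)+\mathrm{Lk}_{j,k}(\hat b)\bigr)\mathrm{Lk}_{i,k}(\hat a).
\]
In your matrix language, the sum of the two columns at $j$ must be parallel to the column of the opposite edge. This is one linear condition. It neither implies nor is implied by equality of the two columns at $j$.

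\textbf{A counterexample inside the hypotheses.} Take $a=\sigma_1^4\sigma_2^2$ and $b=\tilde{\Delta}$. This is Example~\ref{eg4-4} with $n_1=2$, $n_2=1$, $m=1$, which the paper itself offers as satisfying the hypotheses.
\begin{itemize}
\item The vectors $(\mathrm{Lk}_{3,1},\mathrm{Lk}_{1,2},\mathrm{Lk}_{2,3})$ are $(0,2,1)$ for $\hat a$ and $(1,1,1)$ for $\hat b$, and all hypotheses of the corollary hold.
\item The triple linking vector is $(\mathrm{Tlk}_{1,2,3},\mathrm{Tlk}_{2,3,1},\mathrm{Tlk}_{3,1,2})(F)=(-1,-1,2)$.
\item With $\tau=(2\,3)$ one checks $\mathrm{Tlk}_{\tau(x),\tau(y),\tau(z)}(-F)=\mathrm{Tlk}_{x,y,z}(F)$ for all $x,y,z$, and the same holds for $-F^*$.
\end{itemize}
So the triple linking numbers cannot distinguish $F$ from $-F$ or from $-F^*$ here. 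No case analysis along your lines can prove the statement as given.

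\textbf{The paper's proof has the same hole.} In the proof of Theorem~\ref{thm7-2}, a transposition of labels turns the cyclic triples into anticyclic ones. It therefore sends the multiset $\{\mathrm{Tlk}_{1,2,3},\mathrm{Tlk}_{2,3,1},\mathrm{Tlk}_{3,1,2}\}$ to its negative. Hence the identity $Tlk(F)=-Tlk(F)$ is only obtained for even relabelings, and there all three numbers in fact vanish.

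\textbf{What your method does prove.} It gives the conclusion under the hypothesis that $\mathrm{Tlk}_{1,2,3}(F)$, $\mathrm{Tlk}_{2,3,1}(F)$ and $\mathrm{Tlk}_{3,1,2}(F)$ are pairwise distinct. The identity and the $3$-cycles require all three to vanish, and a transposition requires two of them to coincide, so neither can occur.
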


Let $p$ be an odd prime. 
We further study our theme using Fox $p$-colorings. A {\it quandle} is a set with a binary operation satisfying certain conditions, and a {\it $p$-coloring} for a classical link diagram or a surface-link diagram $D$ is a certain map which assign an element of a dihedral quandle $R_p=\mathbb{Z}/p\mathbb{Z}$ to each arc or sheet of $D$. 
We discuss the number of $p$-colorings of the closure of a 3-braid, and we observe the quandle cocycle invariant of $\mathcal{S}_3(a,b)$ associated with $p$-colorings. 
More precisely, we consider the \textit{reduced quandle cocycle invariant} (see Definition \ref{reduced}), which is sufficient to determine the original quandle cocycle invariant, and compute it for torus-covering $T^2$-links of a special form as follows. 
An integer $\nu$ is called a {\it quadratic residue} mod ${p}$ if $\nu\equiv \mu^2 \pmod{p}$ for some $\mu \in \mathbb{Z}/p\mathbb{Z}$, and $\nu$ is called a {\it quadratic non-residue} if it is not a quadratic residue. 
Let $\Big(\frac{~}{p}\Big)$ denote the Legendre symbol, i.e., for $\nu \in \mathbb{Z}$, we have
\begin{align*}
  \Big(\frac{\nu}{p}\Big)
=
\begin{dcases}
  1 &\text{ if $\nu$ is a quadratic residue mod ${p}$ and $\nu \not\equiv 0 \pmod{p}$} \\
  -1 &\text{ if $\nu$ is a quadratic non-residue mod ${p}$ and $\nu \not\equiv 0 \pmod{p}$} \\
  0 &\text{ if $\nu \equiv 0 \pmod{p}$}. 
\end{dcases}
\end{align*}
Furthermore, set 
\begin{align*}
  \varepsilon_p 
=
\begin{dcases}
  1 &\text{ if $p\equiv 1 \pmod{4}$}\\ 
  \sqrt{-1} &\text{ if $p\equiv 3 \pmod{4}$}. 
\end{dcases}
\end{align*}
Then we have the following
 
\begin{theorem}
\label{thm0304}
Let $p\geq 3$ be an odd prime. 
Let $n$ be an odd integer and let $a$ be an $n$-braid presented by 
\[
a=\prod_{j=1}^N \sigma_1^{pk_{1,j}} \sigma_2^{pk_{2,j} } \cdots \sigma_{n-1}^{pk_{n-1,j} }    
\]
for some integer $N>0$ and $k_{1,1}, \ldots, k_{n-1,N} \in \mathbb{Z}$. 
Let $m$ be any integer, and set 
\begin{itemize}
\item 
$\nu_i =\sum_{j=1}^N k_{i,j}$ $(i=1, \ldots, n-1)$, 
\item 
$J = \{ i \in\{1, \dots, n-1\} \mid 2mn \nu_i \not\equiv 0 \pmod{p}\}$. 
\end{itemize}
Then the reduced quandle cocycle invariant $\tilde{\Phi}_p(\mathcal{S}_n(a,\tilde{\Delta}^{2m})) \in \mathbb{C}$ is computed as
\begin{align*}
  \tilde{\Phi}_p(\mathcal{S}_n(a,\tilde{\Delta}^{2m}))
=
p^{n-\frac{1}{2}\#J}\varepsilon_p^{\#J}
\prod_{i \in J}\Big(\frac{2mn\nu_i}{p}\Big). 
\end{align*}
 \end{theorem}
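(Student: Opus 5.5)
The plan is to compute $\tilde{\Phi}_p$ directly from the chart/braid description of $\mathcal{S}_n(a,\tilde{\Delta}^{2m})$. I would reduce it to a Gauss sum over the colorings of the initial points, and then evaluate that sum by the classical formula $\sum_{y\in\mathbb{Z}/p\mathbb{Z}} \exp(2\pi\sqrt{-1}\,c y^2/p)=\big(\frac{c}{p}\big)\varepsilon_p\sqrt{p}$ for $c\not\equiv 0$, together with $\sum_y 1 = p$ for $c\equiv 0$.

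\emph{Step 1: the colorings.} For $\mathcal{S}_n(a,b)$, $p$-colorings correspond to colorings $(x_1,\dots,x_n)\in R_p^n$ of the initial points that are fixed by the actions of both $a$ and $b$ on $R_p^n$ induced by the braid action. In $R_p$ one has $x*y=2y-x$. Hence $\sigma_i^{p}$ acts on the pair $(x_i,x_{i+1})$ as an affine map whose $p$-th iterate is the identity; concretely, $\sigma_i^{2}$ translates both entries by multiples of $x_{i+1}-x_i$, and $p$ such steps vanish mod $p$. So $a$ fixes every coloring. I would then check that the full twist $\tilde{\Delta}^{2m}$ also acts trivially on $R_p^n$: its action is conjugation by a word in the $x_i$, which is an affine map of $R_p$, and raising it to an even power should give the identity. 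If instead it turns out to be a translation, one restricts to its kernel, and the count must then be adjusted. Granting this, all $p^n$ colorings are admissible, which accounts for the factor $p^n$ when $J=\emptyset$.

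\emph{Step 2: the weight of a coloring.} Fix a coloring. The triple points of $\mathcal{S}_n(a,\tilde{\Delta}^{2m})$ arise where the crossings of the $\sigma_i^{pk_{i,j}}$ blocks of $a$ are swept around the $b$-direction by the strands of $\tilde{\Delta}^{2m}$, each of the other strands passing through each crossing. I would compute the Boltzmann weight of each crossing block $\sigma_i^{pk}$ against a strand colored $z$ using Mochizuki's $3$-cocycle
\[
\theta(x,y,z)=(x-y)\frac{(2z^p-y^p)-(2z-y)^p}{p}.
\]
Summing over the $p$ consecutive crossings telescopes via Fermat's little theorem to a quadratic expression of the form $k\,(x_i-x_{i+1})^2$ times a constant, independent of $z$ after summing over all $n$ passing strands. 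This is where the factor $n$ (odd, so the passes do not cancel) and the factor $2m$ (the number of full-twist passes) enter. Altogether I expect the weight to be $\exp\big(\tfrac{2\pi\sqrt{-1}}{p}\sum_{i=1}^{n-1} 2mn\nu_i (x_i-x_{i+1})^2\big)$, possibly up to a global sign in the exponent. Such a sign would not change the final formula, because replacing $c$ by $-c$ changes $\big(\frac{c}{p}\big)$ by $\big(\frac{-1}{p}\big)$ and turns $\varepsilon_p$ into $\bar\varepsilon_p=\big(\frac{-1}{p}\big)\varepsilon_p$, so the two effects cancel. This step is the main obstacle: one must get the bookkeeping of triple-point signs and of the colors on the three sheets right. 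I would first do the case $n=3$, $m=1$, $a=\sigma_1^p$ by hand to fix normalizations, and then use additivity over blocks (the $j$-index) to pass to $\nu_i$.

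\emph{Step 3: the Gauss sum.} Change variables to $x_1$ and $y_i=x_i-x_{i+1}$ for $i=1,\dots,n-1$, which is a bijection of $(\mathbb{Z}/p\mathbb{Z})^n$. The sum then factors as
\[
p\cdot\prod_{i=1}^{n-1}\sum_{y_i}\exp\Big(\tfrac{2\pi\sqrt{-1}}{p}\,2mn\nu_i y_i^2\Big).
\]
A factor with $i\notin J$ equals $p$, and a factor with $i\in J$ equals $\big(\frac{2mn\nu_i}{p}\big)\varepsilon_p p^{1/2}$. Multiplying gives
\[
p^{\,1+(n-1-\#J)+\#J/2}\,\varepsilon_p^{\#J}\prod_{i\in J}\Big(\frac{2mn\nu_i}{p}\Big),
\]
in which the power of $p$ is $n-\tfrac12\#J$, as claimed. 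Finally, I would check against Definition~\ref{reduced} that the reduced invariant is exactly this sum, with no extra normalizing factor.
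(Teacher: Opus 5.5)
Your Step 3 is the paper's entire proof. The paper takes the multiset of Theorem \ref{thm6-2} (the values $2mn\sum_i\nu_i x_i^2$ for $x\in(\mathbb{Z}/p\mathbb{Z})^{n-1}$, each repeated $p$ times), writes $\tilde{\Phi}_p=p\prod_{i=1}^{n-1}G(2mn\nu_i,p)$, and applies Proposition \ref{prop:gauss sum}. Your Steps 1--2 instead re-derive Theorem \ref{thm6-2} by hand, and Step 2 rests on a false claim. You say a global sign in the exponent ``would not change the final formula''. In fact $G(-c,p)=\overline{G(c,p)}=\Big(\frac{-1}{p}\Big)G(c,p)$. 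The expressions $\Big(\frac{-c}{p}\Big)\varepsilon_p$ and $\Big(\frac{c}{p}\Big)\overline{\varepsilon_p}$ are two descriptions of the same number $G(-c,p)/\sqrt{p}$, not two effects that cancel; for example, $G(1,3)=\sqrt{-3}$ but $G(-1,3)=-\sqrt{-3}$. So a sign flip multiplies your answer by $\Big(\frac{-1}{p}\Big)^{\#J}$, i.e.\ replaces it by its complex conjugate. This differs from the claimed value exactly when $p\equiv 3\pmod 4$ and $\#J$ is odd. That is precisely the case that matters: negating all weights is the passage from $F$ to $-F^*$ (Lemma \ref{lem:complex conjugate}), and Corollary \ref{thm1-3} depends on these two values being different.

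This is a real problem, not a hypothetical one. The cocycle you wrote, $(x-y)\frac{(2z^p-y^p)-(2z-y)^p}{p}$, equals $-\theta_p$ for the paper's $\theta_p(s,t,u)=\frac{(s-t)((2u-t)^p+t^p-2u^p)}{p}$. With the paper's other conventions kept, your computation would therefore output $\overline{\tilde{\Phi}_p}$ rather than $\tilde{\Phi}_p$. More generally, Step 2 must determine the coefficient of each $y_i^2$ modulo nonzero squares in $\mathbb{Z}/p\mathbb{Z}$, sign included, under the paper's conventions for $\theta_p$, triple-point signs and orientations. Your sketch (``telescopes'', ``independent of $z$ after summing over all $n$ passing strands'') leaves exactly this undetermined. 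The actual mechanism is Theorem \ref{thm6-1}, which expresses the weights as $-mn\Psi^*_p(\hat{a};C,0)$, here with $2m$ in place of $m$.

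The repair is simply to cite Theorem \ref{thm6-2}. It fixes the normalization $2mn\nu_i$ for the paper's $\theta_p$ and supplies the multiplicity $p$, which corresponds to your free variable $x_1$; your Step 3 then goes through verbatim. Your hedge in Step 1 is also unnecessary. For odd $n$, $A_{\tilde{\Delta}^2}\equiv I \pmod p$ is Proposition \ref{claim-1}, and the argument you gesture at does work: an odd product of point reflections $x\mapsto 2y-x$ is again a point reflection, hence an involution.
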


Using Theorem \ref{thm0304}, we obtain the following 
\begin{corollary}
\label{thm1-3}
Let the notation be the same as in Theorem \ref{thm0304}. Then we have 
\begin{align*}
  \tilde{\Phi}_p(\mathcal{S}_n(a,\tilde{\Delta}^{2m})) \neq \overline{\tilde{\Phi}_p(\mathcal{S}_n(a,\tilde{\Delta}^{2m}))}
\end{align*}
if and only if $p\equiv 3 \pmod{4}$ and $\#J$ is odd. 
In particular, $\mathcal{S}_n(a,\tilde{\Delta}^{2m})$ is not $(-)$-amphicheiral if $p\equiv 3 \pmod{4}$ and $\#J$ is odd. 
\end{corollary}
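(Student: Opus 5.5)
The plan is to read Corollary \ref{thm1-3} directly off the closed formula in Theorem \ref{thm0304}. After that, we combine it with the standard behaviour of the quandle cocycle invariant under orientation-reversed mirror images.

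\emph{Step 1: when is the value real?} Write $\Phi=\tilde{\Phi}_p(\mathcal{S}_n(a,\tilde{\Delta}^{2m}))$. By Theorem \ref{thm0304},
\[
\Phi = p^{n-\frac{1}{2}\#J}\,\varepsilon_p^{\#J}\prod_{i\in J}\Big(\frac{2mn\nu_i}{p}\Big).
\]
The factor $p^{n-\frac12\#J}$ is a positive real number. For $i\in J$ we have $2mn\nu_i\not\equiv 0 \pmod p$, so each Legendre symbol in the product is $\pm1$. Hence the product is $\pm1$ and, in particular, $\Phi\neq 0$. It follows that $\Phi$ is a nonzero real multiple of $\varepsilon_p^{\#J}$, and so $\Phi\neq\overline{\Phi}$ exactly when $\varepsilon_p^{\#J}\notin\mathbb{R}$. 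If $p\equiv1\pmod 4$, then $\varepsilon_p=1$ and $\Phi$ is real. If $p\equiv 3\pmod 4$, then $\varepsilon_p^{\#J}=(\sqrt{-1})^{\#J}$, which is $\pm1$ when $\#J$ is even and $\pm\sqrt{-1}$ when $\#J$ is odd. This proves the ``if and only if'' statement.

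\emph{Step 2: behaviour under $F\mapsto -F^*$.} For the amphicheirality statement, I will use the property, to be established or cited from the definition of $\tilde{\Phi}_p$ (Definition \ref{reduced}) in the later section, that $\tilde{\Phi}_p(-F^*)=\overline{\tilde{\Phi}_p(F)}$. The argument runs as follows.
\begin{enumerate}[(a)]
\item A $p$-coloring of $F$ corresponds bijectively to a $p$-coloring of $-F^*$. Indeed, the mirror image together with orientation reversal preserves the dihedral coloring condition, since $R_p$ is involutory.
\item Under this correspondence the sign of every triple point is negated.
\item The state-sum weight $\theta(x,y,z)^{\pm1}$ is therefore replaced by its inverse. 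The cocycle takes values in $p$-th roots of unity via $\mathbb{Z}/p\mathbb{Z}\hookrightarrow\mathbb{C}^\times$, so the inverse of each weight is its complex conjugate.
\item Hence each summand, and so the full sum, is conjugated. The normalizing factor in the reduced invariant is real, so it does not affect this.
\end{enumerate}

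\emph{Step 3: conclusion.} Given Step 2, if $\mathcal{S}_n(a,\tilde{\Delta}^{2m})\sim -\mathcal{S}_n(a,\tilde{\Delta}^{2m})^*$, then invariance of $\tilde{\Phi}_p$ gives $\Phi=\overline{\Phi}$. By Step 1 this contradicts the hypothesis that $p\equiv3\pmod4$ and $\#J$ is odd, so $\mathcal{S}_n(a,\tilde{\Delta}^{2m})$ is not $(-)$-amphicheiral in that case.

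The main obstacle is Step 2: checking carefully that the sign convention for triple points under orientation-reversed mirroring really inverts every weight, and that the reduction in Definition \ref{reduced} commutes with complex conjugation. Step 1 is purely arithmetic.
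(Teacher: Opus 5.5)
Your proposal is correct and follows the paper's proof. The reality criterion is read off from the formula in Theorem \ref{thm0304}; your remark that the Legendre symbols over $J$ are $\pm1$, so the value is nonzero, makes explicit a point the paper leaves implicit. The non-amphicheirality then follows from $\tilde{\Phi}_p(-F^*)=\overline{\tilde{\Phi}_p(F)}$, which the paper records as Lemma \ref{lem:complex conjugate}, derived from $\Phi_p(-F^*)=-\Phi_p(F)$ as multi-sets; the only slip is cosmetic, since $\tilde{\Phi}_p(F)$ is simply $\Phi_p(F)(v)$ evaluated at $v=\zeta_p$ and has no normalizing factor.
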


A $p$-coloring for $p=3$ is called a {\it tri-coloring}. 
We investigate tri-colorings, and we classify $\mathcal{S}_3(a,b)$ under an equivalence relation, called the {\it qdl-equivalence} relation (Definirion \ref{def6-6}), which is invariant with respect to the quandle cocycle invariant. 
Though we cannot distinguish the chirality of $\mathcal{S}_3(a,b)$ by quandle cocycle invariant associated with tri-colorings, 
we determine the quandle cocycle invariant as follows. 

\begin{theorem}\label{thm1-4}
For arbitrary 3-braids $(a,b)$ which commute, 
the quandle cocycle invariant $\Phi_3(\mathcal{S}_3(a,b))$ in $\mathbb{Z}[v, v^{-1}]/(v^3-1)$ associated with tri-colorings and the Mochizuki 3-cocycle is the number of tri-colorings of $\mathcal{S}_3(a,b)$, determined as 
\[
\Phi_3(\mathcal{S}_3(a,b))=\begin{cases} 27  & \text{if $\mathcal{S}_3(a,b)$ is qdl-equivalent to $\mathcal{S}_3(e, e)$}\\ 
 9 & \text{if $\mathcal{S}_3(a,b)$ is qdl-equivalent to $\mathcal{S}_3(\sigma_1^{\pm1}, e)$} \\ 
 3 & \text{otherwise.} 
 \end{cases}
 \]
\end{theorem}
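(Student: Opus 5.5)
We plan to first reduce the computation of $\Phi_3$ to a count of tri-colorings, and then to classify pairs $(a,b)$ up to qdl-equivalence.

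\emph{Step 1: the cocycle contributes trivially.} For a torus-covering $T^2$-link, a broken surface diagram of $\mathcal{S}_3(a,b)$ is obtained by sweeping the closed braid of $a$ along the braid $b$. Its triple points arise from the braid-relation moves (type $\sigma_1\sigma_2\sigma_1\leftrightarrow\sigma_2\sigma_1\sigma_2$) in the motion realizing $ab=ba$. The plan is to use the qdl-equivalence of Definition \ref{def6-6} to reduce to representatives, and to show that for each tri-coloring the Boltzmann weight is $1\in\mathbb{Z}[v,v^{-1}]/(v^3-1)$.

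The key observation is about colorings: a tri-coloring of $\mathcal{S}_3(a,b)$ corresponds to a coloring vector $x\in R_3^3$ that is fixed by the actions of both $a$ and $b$. For the dihedral quandle $R_3$ these actions factor through $B_3\to GL$-type affine maps on $(\mathbb{Z}/3)^3$ preserving the diagonal. We would show that the Mochizuki 3-cocycle evaluated over the triple points of one sheet cycle is a homomorphism-like quantity. It is additive under concatenation of $b$, and it vanishes whenever $x$ is either trivial (monochromatic, so all weights are $0$) or nontrivial with finite orbit structure.

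Concretely, we would work on the representatives $(e,e)$, $(\sigma_1^{\pm1},e)$ and the remaining classes. In the last case only the three trivial colorings survive, so the weight is automatically trivial. In the first two cases there are no triple points, hence the invariant is exactly the count.

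\emph{Step 2: counting.} The tri-colorings are the common fixed vectors of $\rho(a),\rho(b)$, where $\rho:B_3\to \mathrm{Aff}((\mathbb{Z}/3)^3)$ is given by $\sigma_i$ acting as $(x_i,x_{i+1})\mapsto(x_{i+1},2x_{i+1}-x_i)$. Modulo the diagonal, the image is essentially $B_3\to SL_2(\mathbb{Z}/3)$ (the reduced Burau representation at $t=-1$). The fixed space of a commuting pair in $SL_2(\mathbb{F}_3)$ has dimension $2$ iff both elements are trivial, dimension $1$ iff the pair generates a nontrivial unipotent group, and dimension $0$ otherwise. This gives $27$, $9$ and $3$ colorings respectively, after adding the diagonal factor $3$.

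We then identify the first two cases with the qdl-classes of $\mathcal{S}_3(e,e)$ and $\mathcal{S}_3(\sigma_1^{\pm1},e)$. This uses the invariance of qdl-equivalence and the conjugacy/$SL_2(\mathbb{Z})$-action on basis pairs (Theorem \ref{theorem3-2} type moves).

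\emph{Main obstacle.} The hard step is Step 1: proving that the Mochizuki cocycle sum vanishes for every class, not merely for the representatives. The difficulty is that one must know qdl-equivalence preserves $\Phi_3$ and that every commuting pair reduces to a representative. I would attempt this by classifying commuting pairs in $B_3$ as powers of a common root times central $\tilde{\Delta}$-powers. Then one would check, via Theorem \ref{thm0304}-style Gauss-sum computation with $p=3$, that the relevant quadratic form vanishes on colorings of each type.
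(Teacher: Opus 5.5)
\textbf{Verdict: there is a genuine gap.} Your argument handles only the representatives. The two steps you set aside as the ``main obstacle'' are the actual content of the proof, and the proposal does not supply them.

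\textbf{Representatives.} Here your reasoning agrees with the paper. $\mathcal{S}_3(c^{\pm1},e)$ has no triple points, so $\Phi_3$ is the number of tri-colorings of the closure of $c^{\pm1}$. $\mathcal{S}_3(c^{\pm1},\tilde{\Delta})$ admits only the three trivial tri-colorings.

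\textbf{First gap: invariance of $\Phi_3$ on qdl-classes} (Theorem~\ref{thm6-6}). Your heuristic for why weights vanish (a ``homomorphism-like'' sum that vanishes for colorings ``with finite orbit structure'') is not an argument. The Gauss-sum route of Theorem~\ref{thm0304} only covers $a$ with all syllable exponents divisible by $3$ and $b=\tilde{\Delta}^{2m}$. The ingredient you are missing is Theorem~\ref{thm6-1}: if $A_{\tilde{\Delta}}^m\equiv I \pmod p$, then $\Phi_p(\mathcal{S}_n(a,\tilde{\Delta}^m))=\{-mn\Psi_p^*(\hat a;C,0)\}$. For $n=p=3$ the factor $-3m$ vanishes mod $3$ for \emph{every} $a$. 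Combine this with two further facts:
\begin{enumerate}
\item odd $m$ allows only trivial colorings;
\item $A_{\sigma_i^3}\equiv A_{\tilde{\Delta}^2}\equiv I \pmod 3$.
\end{enumerate}
Together these give Proposition~\ref{a-prop6-5} and invariance under (Q2)--(Q4). Invariance under (Q1) needs a separate observation: the triple points of $\mathcal{S}_3(a,b)$ and $\mathcal{S}_3(a,ab)$ come from the same transformation $ab\to ba$, with the same colors.

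\textbf{Second gap: reduction to finitely many representatives} (Theorem~\ref{thm6-9}). The ``otherwise $3$'' case depends on every commuting pair being qdl-equivalent to one of finitely many explicit representatives. The paper proceeds as follows:
\begin{enumerate}
\item write $(a,b)=(c_0^{l_1}\tilde{\Delta}^{m_1},c_0^{l_2}\tilde{\Delta}^{m_2})$ (Proposition~\ref{prop6-5});
\item run the Euclidean algorithm on $(l_1,l_2)$ with (E3) and (Q1) to reach $(c_1\tilde{\Delta}^{l},\tilde{\Delta}^m)$;
\item reduce $m$ to $0$ or $1$ by (Q3);
\item shorten $c_2$ with (Q4) and braid relations, e.g.\ $(\sigma_1\sigma_2^{-1})^3\sim_{qdl}(\sigma_1\sigma_2^{-1})^{-1}$, to one of $e,\sigma_1,\sigma_1\sigma_2,\sigma_1\sigma_2^{-1},(\sigma_1\sigma_2^{-1})^2$ or their inverses;
\item read off the counts from Propositions~\ref{prop5-2-28} and~\ref{prop5-2}.
\end{enumerate}

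\textbf{On your $SL_2(\mathbb{F}_3)$ count.} The fixed-space count is a correct and tidy way to obtain $27$, $9$, $3$ in terms of the images of $a$ and $b$. It does not, however, connect these cases to qdl-classes.
\begin{itemize}
\item Conjugation and the moves (E1)--(E4), (Q1) preserve $\langle a,b\rangle\subset B_3$ up to conjugacy. So they alone cannot carry, say, $\mathcal{S}_3(\sigma_1^3,e)$ to $\mathcal{S}_3(e,e)$.
\item The moves that change a braid mod $3$, namely (Q2)--(Q4), are available only after $b$ has been made a power of $\tilde{\Delta}$.
\end{itemize}
To finish along your lines you would need three things:
\begin{enumerate}
\item that reduction of $b$ to a power of $\tilde{\Delta}$;
\item the fact that two $3$-braids with the same image in $SL_2(\mathbb{F}_3)$ are related by repeated (Q4), for instance via $B_3/\langle\langle\sigma_1^3\rangle\rangle\cong SL_2(\mathbb{F}_3)$;
\item the fact that every nontrivial unipotent element of $SL_2(\mathbb{F}_3)$ is conjugate to the image of $\sigma_1$ or $\sigma_1^{-1}$.
\end{enumerate}
With those supplied, your route would be a genuine alternative to the paper's word-by-word reduction. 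Without them, the proposal establishes the formula only for the representatives.
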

 
The paper is organized as follows. In Section \ref{sec2}, we review torus-covering $T^2$-links. In Section \ref{sec2}, we discuss equivalence of $\mathcal{S}_n(a,b)$ and we show Theorem \ref{theorem3-2}. In Section \ref{sec3}, we review $p$-colorings. In Section \ref{sec4}, we discuss the triple linking numbers, and we show Corollary \ref{cor1-2}. In Section \ref{sec5}, we observe the number of $p$-colorings. In Section \ref{sec6-1}, we review the quandle cocycle invariant associated with $p$-colorings. 
In Section \ref{0306-new-sec6-2}, we define the reduced quandle cocycle invariant and prove Theorem \ref{thm0304} and Corollary \ref{thm1-3}. 
In Section \ref{sec6-2}, 
we focus on tri-colorings and classify $\mathcal{S}_3(a,b)$ under the qdl-equivalence relation, which is invariant under the quandle cocycle invariant, 
and then we prove Theorem \ref{thm1-4}. 
In Section \ref{sec8}, we discuss other results derived from Theorem \ref{thm1-4}. 
We refer to \cite{CKS, CF, Kawauchi} for basics of classical knot and surface-knot theory. 

\section{Torus-covering $T^2$-links}\label{sec2}
In this section, we review torus-covering $T^2$-links \cite{N}. 
 A {\it surface-link} is an oriented closed surface smoothly embedded in $\mathbb{R}^4$, and two surface-links are said to be {\it equivalent} if one is carried to the other by an orientation-preserving self-homeomorphism of $\mathbb{R}^4$. 

Let $T$ be a torus standardly embedded in $\mathbb{R}^4$, i.e., $T$ is the boundary of an unknotted solid torus in $\mathbb{R}^3 \times \{0\} \subset \mathbb{R}^4$. Let $N(T)$ be a tubular neighborhood of $T$ in $\mathbb{R}^4$. Let $n$ be a positive integer. 
\begin{definition}
A surface-link $F$ in $\mathbb{R}^4$ is called a {\it torus-covering $T^2$-link of degree $n$} if it is contained in $N(T) \subset \mathbb{R}^4$ and 
$\mathbf{p} |_F: F \to T$ is an orientation-preserving unbranched covering map of degree $n$, where $\mathbf{p}: N(T) \to T$ is the natural projection. 
\end{definition}

Let $F$ be a torus-covering $T^2$-link. We identify $T=S^1 \times S^1$ with $S^1=[0,1]/(0 \sim 1)$ and $N(T)=D^2 \times T$. Let $\mathbf{m}=S^1 \times \{0\}$ and $\mathbf{l}= \{0\} \times S^1$, a meridian and a longitude of $T$ with the base point $x_0=(0,0)$.  The condition that $F$ is an unbranched covering over $T$ implies that the intersections $F \cap \mathbf{p}^{-1}(\mathbf{m})$ and $F \cap \mathbf{p}^{-1}(\mathbf{l})$ are closures of classical $n$-braids in solid tori $\mathbf{p}^{-1}(\mathbf{m})=D^2 \times S^1 \times \{0\}$ and $\mathbf{p}^{-1}(\mathbf{l})=D^2 \times \{0\} \times S^1$, respectively. 
Taking the starting/terminal point set of the $n$-braids in the 2-disk $\mathbf{p}^{-1}(x_0)=D^2 \times \{(0,0)\}$, we have a pair of $n$-braids, called {\it basis $n$-braids}.

For $n$-braids $a$ and $b$, we say that $a$ and $b$ {\it commute} if $ab=ba$ as elements of the $n$-braid group. 
For a torus-covering $T^2$-link, basis $n$-braids commute, and for any 
pair of $n$-braids $(a,b)$ which commute, there exists a unique torus-covering $T^2$-link of degree $n$ with basis $n$-braids $(a,b)$. For $n$-braids $(a,b)$ which commute, we denote by $\mathcal{S}_n(a,b)$ the torus-covering $T^2$-link with basis $n$-braids $(a,b)$.

\section{Chirality derived from the structure of $\mathcal{S}_n(a,b)$}\label{sec3}
In this section, we discuss the equivalence of $\mathcal{S}_n(a,b)$ and prove Theorem \ref{theorem3-2}. 
We assume that $N(T)=D^2 \times T$ is embedded in $\mathbb{R}^4$ as follows: 
\begin{itemize}
\item
$S^1=\{(u,v) \in \mathbb{R}^2 \mid u^2+v^2=1\}$, 

\vspace{0.2cm}
\item
$T=\left\{\bigl(q_1(2+p_1), q_2(2+p_1), p_2, 0\bigl) \mid (p_1, p_2), (q_1, q_2) \in S^1\right\}$,
\vspace{0.2cm}
\item
$\mathbf{m}=\{\left(2+p_1, 0, p_2, 0\right) \mid (p_1, p_2)\in S^1 \}$,
\vspace{0.2cm}
\item
$\mathbf{l} =\{\left(3q_1, 3q_2, 0, 0\right) \mid (q_1, q_2) \in S^1\}$,
\vspace{0.2cm}
\item
$x_0=(3,0,0,0)$,
\vspace{0.2cm}
\item
For $x =\bigl((\cos \phi) (2+\cos \theta), (\sin \phi) (2+\cos \theta), \sin \theta, 0\bigl)  \in T$, 
\begin{align*}
\quad \quad \quad &D^2 \times \{ x\}  &\\
\quad \quad \quad & =
\left\{\bigl((\cos \phi) (2+r\cos \theta), (\sin \phi) (2+r\cos \theta), r\sin \theta, t\bigl) \mid 1/4 \leq r \leq 5/4, -1 \leq t \leq 1 \right\} &\\
\quad \quad \quad &=\mathbf{p}^{-1}( x ). &
\end{align*}
\end{itemize}

We equip $D^2$ or $T$ with positive orientation, and we denote by $-D^2$ or $-T$  the manifolds obtained from $D^2$ or $T$ by reversing the orientation. 
We denote by $(-\mathbf{m}, -\mathbf{l})$ the orientation-reversal of $(\mathbf{m}, \mathbf{l})$. 

\subsection{Equvalence of $\mathcal{S}_n(a,b)$}
\begin{theorem}\label{thm3-1}
Let $(a,b)$ and $(a', b')$  be pairs of $n$-braids which commute. 
We have 
\begin{equation}
\mathcal{S}_n(a,b) \sim \mathcal{S}_n(a^{-1}, b^{-1}). \label{E0} \tag{E1}\\
\end{equation}
If $(a,b)$ and $(a', b')$ are conjugate, 
then $\mathcal{S}_n(a,b)$ and $\mathcal{S}_n(a',b')$ are equivalent, i.e., 

\begin{equation}
\mathcal{S}_n(c^{-1}ac, c^{-1}bc) \sim \mathcal{S}_n(a,b) \label{eq1121-1}\tag{E2}
\end{equation}
for any $n$-braid $c$. 

Further, we have the following relations. 
\begin{align}
& \mathcal{S}_n(a,b) \sim \mathcal{S}_n(b^{-1}, a), \label{eq1121-2} \tag{E3}\\
& \mathcal{S}_n(a,b) \sim \mathcal{S}_n(a, a^2b). \label{eq1121-3} \tag{E4}
\end{align}
\end{theorem}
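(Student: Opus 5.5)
All four relations will come from one mechanism. A torus-covering $T^2$-link is the closure of a monodromy: the braids $a$ and $b$ are the monodromies of the covering along the loops $\mathbf{m}$ and $\mathbf{l}$, read off in the fiber disk $\mathbf{p}^{-1}(x_0)$. So $\mathcal{S}_n(a,b)$ is determined, up to fiber-preserving isotopy of $N(T)$, by the homomorphism
\[
\rho:\pi_1(T,x_0)\cong\mathbb{Z}^2\to B_n, \qquad \rho(\mathbf{m})=a,\ \rho(\mathbf{l})=b.
\]
Any self-diffeomorphism $f$ of $N(T)=D^2\times T$ that preserves the product structure and extends to an orientation-preserving self-homeomorphism of $\mathbb{R}^4$ carries $\mathcal{S}_n(a,b)$ to the torus-covering link whose monodromy is $\rho$ precomposed with $f_*^{-1}$ on $\pi_1(T)$, possibly composed with an automorphism of $B_n$ coming from the action of $f$ on the fiber $D^2$. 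So I will prove each relation by exhibiting a suitable ambient map.

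\textbf{(E2).} This comes from a fiberwise isotopy, with no change to $T$. Take an isotopy of $D^2$ rel boundary realizing the braid $c$, that is, a path of configurations. Use it to change the identification of $\mathbf{p}^{-1}(x_0)$ with the standard disk by sliding the base fiber along the path. Equivalently, push $F$ in a collar neighborhood of the fiber over $x_0$ so that both monodromies are conjugated by $c$. This is an isotopy of $\mathbb{R}^4$ supported in $N(T)$. Concretely, $\mathcal{S}_n(a,b)$ is determined only up to the choice of base-point identification, and different choices give simultaneous conjugation.

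\textbf{(E1), (E3), (E4).} These need self-maps of $T$ that extend to orientation-preserving diffeomorphisms of $\mathbb{R}^4$ preserving $N(T)$. With respect to the basis $(\mathbf{m},\mathbf{l})$ I use the following maps.
\begin{itemize}
\item For (E1): the map $(\mathbf{m},\mathbf{l})\mapsto(-\mathbf{m},-\mathbf{l})$, realized as a rotation of $\mathbb{R}^4$ by $\pi$ in a suitable 2-plane. It reverses both circle factors, preserves the orientation of $T$, and acts trivially on the normal $D^2$ orientation. Pulling back the monodromy along loops traversed backwards gives $(a^{-1},b^{-1})$.
\item For (E3): the map $\mathbf{m}\mapsto\mathbf{l}$, $\mathbf{l}\mapsto-\mathbf{m}$, which is a $90^\circ$ rotation of $T$. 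The standard torus in $\mathbb{R}^4$ is symmetric under such a map, since $T$ is also the boundary of the complementary unknotted solid torus after pushing into the fourth coordinate.
\item For (E4): a Dehn twist of the form $\mathbf{l}\mapsto\mathbf{l}+2\mathbf{m}$. It is known (Montesinos, Hirose) that the mapping classes of the standard $T^2$ extending to $\mathbb{R}^4$ form the Rokhlin subgroup, corresponding to preserving the mod 2 quadratic form. The twist along $\mathbf{m}$ squared extends, because $\mathbf{m}$ bounds a disk in the unknotted solid torus and the double twist is realized by rotating the fiber $\mathbb{R}^2$-direction by $2\pi$ around that disk. Under this map the monodromy along the new $\mathbf{l}$ becomes $a^2b$ (or $a^{-2}b$, depending on orientation conventions, and then I use $a^2\leftrightarrow a^{-2}$ by symmetry), while the monodromy along $\mathbf{m}$ stays $a$.
\end{itemize}

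\textbf{Main obstacle.} The hard part is checking that these extensions preserve the fiber structure of $N(T)$ and the orientation of $\mathbb{R}^4$, and that the induced action on the fiber disk is trivial, so the braids are not also mirrored or reversed. For (E4) in particular, I first plan to write the extension explicitly in the given coordinates. The map is
\[
(\phi,\theta,r,t)\mapsto(\phi,\theta+2\phi,\ldots),
\]
and the obstruction is making it continuous off $N(T)$. I would handle this by cite-level appeal to the extendability of spin-preserving torus diffeomorphisms, and then verify the fiber action directly.
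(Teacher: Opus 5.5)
Your (E1) is the paper's argument: the rotation $(x,y,z,t)\mapsto(x,-y,-z,t)$ fixes $x_0$, reverses $\mathbf{m}$ and $\mathbf{l}$, and is the identity in the fiber coordinates.

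Your (E2) argues differently. The paper reads the longitude word $c^{-1}bc$ from a new base point just past $c^{-1}$ and then rotates $\mathbb{R}^4$ in the $(x,y)$-plane. Your bump-function push over a small disk $U\ni x_0$ is instead an isotopy supported in $\mathbf{p}^{-1}(U)$ that conjugates both monodromies at once. Combined with uniqueness of $\mathcal{S}_n$ for given basis braids, it is a valid and more local proof.

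For (E3) and (E4) the paper gives no argument, only the citation \cite[Corollary~2.9]{N}. Extending the maps $\begin{pmatrix}0&-1\\1&0\end{pmatrix}$ and $\begin{pmatrix}1&2\\0&1\end{pmatrix}$ of $T$ is the right idea; both preserve the Rokhlin form, since $q(\mathbf{m})=q(\mathbf{l})=0$. However, your sketch has a real gap exactly at the step you postpone, and you have misidentified what can go wrong there.

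An extension theorem only gives some orientation-preserving $f$ of $\mathbb{R}^4$ with $f|_T=g$. It says nothing about $f$ on the normal bundle. After straightening $f$ near $T$, its fiber action is a map $T\to SO(2)$. If this map winds $w(\gamma)$ times along a loop $\gamma$, then $f(\mathcal{S}_n(a,b))$ has monodromy $\gamma\mapsto\rho(g_*^{-1}\gamma)\,\tilde{\Delta}^{\pm w(\gamma)}$. That is neither an automorphism of $B_n$ nor a mirroring or reversal, and it is not harmless. For example, by Propositions~\ref{newprop5-1} and~\ref{prop5-2-28}, $\mathcal{S}_3(e,e)$ has $27$ tri-colorings while $\mathcal{S}_3(e,\tilde{\Delta})$ has only $3$. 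So you must prove $w\equiv 0$.

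The missing idea is that the fiber framing of $N(T)$, given by the $t$-direction, is canonical. It is the unique framing for which the push-off of every loop of $T$ has linking number $0$ with $T$: indeed $\gamma\times\{\epsilon\}$ bounds a $2$-chain in $\mathbb{R}^3\times\{\epsilon\}$ disjoint from $T$. One extra turn along $\gamma$ changes that linking number by $\pm1$. Since $f$ preserves linking numbers, $w\equiv0$. Hence the fiber action is null-homotopic and can be isotoped away, leaving at worst a conjugation (and a base-point shift) that (E2) absorbs.

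Alternatively, use explicit extensions in which this is visible.
\begin{itemize}
\item For (E4), rotate each $3$-ball $D_{\mathbf{m}}\times[-1,1]$ about the $t$-axis, where $D_{\mathbf{m}}$ is a slightly enlarged meridian disk. Let the angle increase from $0$ to $4\pi$ across a short arc of $\mathbf{l}$, and cut off to the identity near the boundary using $\pi_1(SO(3))\cong\mathbb{Z}/2$ (the belt trick). Near $T$ this map is a rotation about the $t$-axis, so it is the identity in the fiber coordinates $(r,t)$.
\item For (E3), the Clifford-torus symmetry $(z_1,z_2)\mapsto(\bar z_2,z_1)$ reverses the orientation of $S^3$. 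Composed with $t\mapsto -t$, it acts on the normal plane by the constant rotation by $\pi$, which costs only a conjugation by the half twist.
\end{itemize}
Your phrases about `rotating the fiber direction by $2\pi$' and `pushing into the fourth coordinate' do not supply either of these. With this step filled in, your route gives a self-contained proof of (E3) and (E4), which the paper itself does not contain.
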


\begin{proof}
The relations (\ref{eq1121-2}) and (\ref{eq1121-3}) are shown in \cite[Corollary 2.9]{N}.

We show (\ref{E0}). Put $F=\mathcal{S}_n(a,b)$. Let $f$ be an orientation-preserving self-homeomorphism of $\mathbb{R}^4$ given by $f(x,y,z,t)=(x, -y, -z, t)$. Note that $f^2=\mathrm{id}$. Then $f(T)=T$ as oriented manifolds and $(f(\mathbf{m}), f(\mathbf{l}))=(-\mathbf{m}, -\mathbf{l})$. 
Since $\mathbf{p}|_F: F \to T$ is an orientation-preserving unbranched covering map, so is $f \circ \mathbf{p} \circ f^{-1}|_{f(F)}: f(F) \to f(T)=T$. Note that the restriction of $f$ to $\mathbf{p}^{-1}(x)=D^2 \times \{x\}$ for any $x \in T$ is an orientation-preserving homeomorphism $D^2 \times \{x\} \to D^2 \times \{f(x)\}$. 
We see that $f(F) \cap \mathbf{p}^{-1}(\mathbf{m})=f(F) \cap (D^2 \times \mathbf{m})=f(F) \cap f(D^2 \times (-\mathbf{m}))=f(F \cap (D^2 \times (-\mathbf{m})))$. 
Let $a=\sigma_{i_1}^{\epsilon_1} \sigma_{i_2}^{\epsilon_2} \cdots \sigma_{i_k}^{\epsilon_k}$  $(i_1, \ldots, i_k \in \{1, \ldots, n-1\}$, $\epsilon_1, \ldots, \epsilon_k \in \{+1, -1\})$. 
The order of crossings of the closed braid $\hat{a'} \coloneqq f(F) \cap (D^2 \times \mathbf{m})$ is reversed from that of $\hat{a}$, and 
the sign of each crossing of $a'$ 
is changed from that of the corresponding crossing of $a$; so the braid $a'$ is $\sigma_{i_k}^{-\epsilon_k} \sigma_{i_{k-1}}^{-\epsilon_{k-1}} \cdots \sigma_{i_{1}}^{-\epsilon_1}$, which is $a^{-1}$. 
Similarly, 
we see that $f(F) \cap \mathbf{p}^{-1}(\mathbf{l})$ is  the closure of $b^{-1}$. Thus, $f(F)$, which is equivalent to $F$, is $\mathcal{S}_n(a^{-1}, b^{-1})$.

We show (\ref{eq1121-1}). 
Put $F=\mathcal{S}_n(c^{-1}ac, c^{-1}bc)$. 
We consider $N(T)=D^2 \times S^1 \times S^1$. 
Let $t_0=(\cos \theta_0, \sin \theta_0)$ be a point in $S^1$ 
such that the closure of $c^{-1}bc$ in $D^2 \times  \{(\cos 0, \sin 0)\} \times \mathbf{l}$ with the starting point set in $D^2 \times \{x_0\}=D^2 \times \{(\cos 0, \sin 0)\} \times \{(\cos 0, \sin 0)\}$ is interpreted as the closure of $bcc^{-1}$ with the starting point set in $D^2 \times \{(\cos 0, \sin 0)\} \times \{t_0\}$. 
Then, taking $((\cos 0, \sin 0), t_0)$ as a new base point of $S^1 \times S^1=T$, we see that $\mathcal{S}_n(c^{-1}ac, c^{-1}bc) \sim \mathcal{S}_n(a,bcc^{-1})$. More precisely, let $f$ be a  linear transformation of $\mathbb{R}^4$ given by $\begin{pmatrix} \cos \theta_0 & -\sin \theta_0 & 0 & 0 \\
\sin \theta_0 & \cos \theta_0 & 0 & 0 \\
0 & 0 & 1 & 0 \\
0 & 0 & 0 & 1
\end{pmatrix}$, which is an orientation-preserving self-homeomorphism of $\mathbb{R}^4$. Then $f(F)=\mathcal{S}_n(a, bcc^{-1})= \mathcal{S}_n(a, b)$. 
\end{proof}

\subsection{Proof of Theorem \ref{theorem3-2}}

\begin{lemma}\label{lemma3-3}
Let $a$ be an $n$-braid with the presentation $a=\sigma_{i_1}^{\epsilon_1} \sigma_{i_2}^{\epsilon_2} \cdots \sigma_{i_k}^{\epsilon_k}$  $(i_1, \ldots, i_k \in \{1, \ldots, n-1\}$, $\epsilon_1, \ldots, \epsilon_k \in \{+1, -1\})$. Then 
\begin{eqnarray*}
-a &=& \sigma_{i_k}^{\epsilon_k} \sigma_{i_{k-1}}^{\epsilon_{k-1}} \cdots \sigma_{i_1}^{\epsilon_1}, \\
a^* &=& \sigma_{i_1}^{-\epsilon_1} \sigma_{i_2}^{-\epsilon_2} \cdots \sigma_{i_k}^{-\epsilon_k}, \\
-a^* &=& \sigma_{i_k}^{-\epsilon_k} \sigma_{i_{k-1}}^{-\epsilon_{k-1}} \cdots \sigma_{i_1}^{-\epsilon_1}=a^{-1}. 
\end{eqnarray*}
\end{lemma}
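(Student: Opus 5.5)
The approach is to read off the three formulas directly from the geometric meaning of orientation reversal and mirror image for a braid, viewed as a collection of strands in the cylinder $D^2\times[0,1]$. Fix conventions. The closure $\hat a$ lives in a solid torus, and $a$ is read from bottom ($t=0$) to top ($t=1$). Each letter $\sigma_{i_j}^{\epsilon_j}$ occupies a horizontal slab $D^2\times[(j-1)/k,\,j/k]$, where strands $i_j$ and $i_j+1$ cross with sign $\epsilon_j$. We interpret $-a$ as the braid obtained by reversing the orientation of every strand, and $a^*$ as the image of $a$ under a reflection of the ambient space fixing the braid direction (so that $\widehat{a^*}=\hat a^{\,*}$ and $\widehat{-a}=-\hat a$).

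\textbf{Orientation reversal.} Reversing the strand orientations, we read the braid from top to bottom. Apply the rotation by $\pi$ about an axis perpendicular to the braid direction; this swaps $t\leftrightarrow 1-t$ and restores upward orientation. The slabs now appear in the order $k,k-1,\dots,1$. In each slab the crossing sign is unchanged, since the sign of a crossing in an oriented link does not depend on simultaneously reversing both strands, and a rotation preserves signs. We have to check that the rotation also sends the pair of positions $(i,i+1)$ to $(i,i+1)$ rather than to $(n-i,n+1-i)$. If the axis of rotation is chosen parallel to the line containing the endpoints, positions are preserved; otherwise one composes with the flip $\sigma_i\mapsto\sigma_{n-i}$. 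That flip is conjugation by the half twist $\Delta$, so it is harmless up to conjugacy. This gives $-a=\sigma_{i_k}^{\epsilon_k}\cdots\sigma_{i_1}^{\epsilon_1}$.

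\textbf{Mirror image.} Reflect in a plane containing the braid direction, for instance the plane spanned by the $t$-axis and the line of endpoints. This keeps the order of slabs and the positions of the strands but exchanges over- and under-crossings, so every $\epsilon_j$ becomes $-\epsilon_j$. Hence $a^*=\sigma_{i_1}^{-\epsilon_1}\cdots\sigma_{i_k}^{-\epsilon_k}$.

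\textbf{Combining.} Applying the two operations in succession gives $-a^*=\sigma_{i_k}^{-\epsilon_k}\cdots\sigma_{i_1}^{-\epsilon_1}$, which is the word for $a^{-1}$. Since $a^{-1}$ is well defined in the braid group, this also shows that $-a^*$ is independent of the chosen word. For $-a$ and $a^*$ separately, well-definedness follows because the maps $\sigma_i\mapsto\sigma_i^{-1}$ and the word reversal are respectively an automorphism and an anti-automorphism of $B_n$. Both preserve the braid relations $\sigma_i\sigma_{i+1}\sigma_i=\sigma_{i+1}\sigma_i\sigma_{i+1}$ and $\sigma_i\sigma_j=\sigma_j\sigma_i$ for $|i-j|\ge 2$.

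The only real care needed is the choice of the rotation and the reflection, so that the strand positions and the meaning of ``positive crossing'' are preserved exactly as used in the proof of (E1) of Theorem \ref{thm3-1}. I expect this convention-matching to be the main point to get right; the rest is bookkeeping.
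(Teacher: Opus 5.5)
Your argument is correct and is essentially the paper's proof, which simply says the three formulas hold by definition of $-a$ and $a^*$; the paper in fact uses the lemma as its working definition in the proof of Theorem~\ref{theorem3-2}. You make that definition explicit through the rotation about the axis parallel to the endpoint line and the reflection in the plane containing the endpoints, and you add the well-definedness check the paper omits. Only that particular choice of axis gives the stated equality of braids, so the aside about composing with $\sigma_i\mapsto\sigma_{n-i}$ ``up to conjugacy'' is not needed, and on its own it would not prove an equality.
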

\begin{proof}
By definition of the orientation reversed image $-a$ and the mirror image $a^*$ of an $n$-braid $a$, we have the requires result. 
\end{proof}

\begin{proof}[Proof of Theorem \ref{theorem3-2}]
By (\ref{E0}) in Theorem \ref{thm3-1} and $c^{-1}=-c^*$ $(c=a,b)$ (Lemma \ref{lemma3-3}), it suffices to show the first equivalence for each case. 
Put $F=\mathcal{S}_n(a,b)$, which is an orientation-preserving unbranched covering over $T$ with the meridian $\mathbf{m}$ and the longitude $\mathbf{l}$. %

We show (\ref{eq3-1}).  Note that $-F$ is a surface-link in $(-D^2) \times (-T)$ which is in the form of an orientation-preserving unbranched covering over $-T$, with the projection $\mathbf{p}': (-D^2) \times (-T) \to -T$, and $-F$ coincides with $F$ when we forget the orientation. 
We take an orientation-preserving self-homeomorphism $f$ of $\mathbb{R}^4$ satisfying $f((-D^2) \times \{x\})=D^2 \times \{f(x)\}$ $(x \in T)$ and $f(-T)=T$ as the homeomorphism given by $f(x,y,z,t)=(x, y, -z, -t)$. Note that $f^2=\mathrm{id}$. 
Then $f(\mathbf{m})=-\mathbf{m}$ and $f(\mathbf{l})=\mathbf{l}$, and the restriction of $f$ to $(\mathbf{p}')^{-1}(x)=(-D^2) \times \{x\}$ for any $x \in T$ is an orientation-preserving homeomorphism $(-D^2) \times \{x\} \to D^2 \times \{f(x)\}$. 
We see that $f(-F) \cap (D^2 \times \mathbf{m})=f(-F) \cap f((-D^2) \times (-\mathbf{m}))=f((-F) \cap ((-D^2) \times (-\mathbf{m})))$. Let $a=\sigma_{i_1}^{\epsilon_1} \sigma_{i_2}^{\epsilon_2} \cdots \sigma_{i_k}^{\epsilon_k}$  $(i_1, \ldots, i_k \in \{1, \ldots, n-1\}$, $\epsilon_1, \ldots, \epsilon_k \in \{+1, -1\})$. 
The order of crossings of the closed braid $\hat{a'} \coloneqq f(-F) \cap (D^2 \times \mathbf{m})$ is reversed from that of $\hat{a}$, and 
the sign of each crossing of $a'$ 
is unchanged from that of the corresponding crossing of $a$; so the braid $a'$ is $\sigma_{i_k}^{\epsilon_k} \sigma_{i_{k-1}}^{\epsilon_{k-1}} \cdots \sigma_{i_{1}}^{\epsilon_1}$, which is $-a$ by Lemma \ref{lemma3-3}. 
Similarly, $f(-F) \cap (D^2 \times \mathbf{l})=f((-F) \cap f((-D^2) \times \mathbf{l})=f((-F) \cap ((-D^2) \times \mathbf{l})$. 
Let $b=\sigma_{j_1}^{\delta_1} \sigma_{j_2}^{\delta_2} \cdots \sigma_{j_l}^{\delta_l}$  $(j_1, \ldots, j_l \in \{1, \ldots, n-1\}$, $\delta_1, \ldots, \delta_l \in \{+1, -1\})$. 
Then $f(-F) \cap (D^2 \times \mathbf{l})$ is the closure of an $n$-braid $b'$ 
presented by $b'=\sigma_{j_1}^{-\delta_1} \sigma_{j_2}^{-\delta_2} \cdots \sigma_{j_l}^{\delta_l}$, which is $b^*$ by Lemma \ref{lemma3-3}. 

We show (\ref{eq3-2}). 
The mirror image $F^*$ is the image of $F$ by an orientation-reversing self-homeomorphism $f$ of $\mathbb{R}^4$. We take $f$ which maps $(x,y,z,t)$ to $(x,y,z, -t)$. Then $F^*=f(F) \subset (-D^2) \times T$, which is in the form of an orientation-preserving unbranched covering of $T$. 
Then we see that the signs of crossings of $F^* \cap \mathbf{p}^{-1}(\mathbf{m})$ and $F^* \cap \mathbf{p}^{-1}(\mathbf{l})$ are reversed, and they are the closures of the mirror images $a^*$ and $b^*$, respectively. 

The equivalence  
 (\ref{eq3-3}) is obtained from the combination of (\ref{eq3-1}) and (\ref{eq3-2}). 
 The equivalence  (\ref{eq3-4}) follows from the fact that $\tilde{\Delta}=-\tilde{\Delta}$ and $-b^*=b^{-1}$ for any $n$-braid $b$, and  (\ref{eq3-1}).  
\end{proof}

\begin{remark}
Any torus-covering $T^2$-link $\mathcal{S}_n(a,b)$ is presented by a certain finite oriented graph on $T$ called a \lq\lq chart'' \cite{N}. Let $\Gamma$ be a chart on $T$ presenting $\mathcal{S}_n(a,b)$. 
We denote by $-\Gamma$ the chart obtained from $\Gamma$ by reversing the orientation of every edge in $\Gamma$, and we denote by $\Gamma^*$ the mirror image of $\Gamma \subset T \subset \mathbb{R}^3 \times \{0\}$, given by $\Gamma^*=f(\Gamma)$ where $f$ is an orientation-reversing self-homeomorphism of $\mathbb{R}^3 \times \{0\}$. 
Then, $-\mathcal{S}_n(a,b)$ is presented by the chart $\Gamma^*$, and $\mathcal{S}_n(a,b)^*$ is presented by the chart $-\Gamma$. 
\end{remark}

\section{Invariants of torus-covering $T^2$-links of degree 3. \newline (I) Triple linking numbers}\label{sec4}

\subsection{Triple linking numbers}\label{sec4-1}

For a link $L$ or a surface-link $F$, we obtain a diagram of $L$ or $F$ by a method as follows. 
We take the image of $L$ or $F$ by a generic projection to $\mathbb{R}^2$ or $\mathbb{R}^3$. Around a crossing or a double point curve, the image consists of an {\it over-arc/sheet} and an {\it under-arc/sheet} with respect to the projection. 
In order to equip the image with crossing information, we break each under-arc or under-sheet into two pieces around each crossing or double point curve. A {\it diagram} of $L$ (respectively, $F$) is the set consisting of resultant arcs (respectively, compact surfaces), which are also called {\it over-arcs/under-arcs}, or simply {\it arcs} (respectively,  {\it over-sheets/under-sheets}, or simply {\it sheets}). Around a triple point, a diagram consists of a single {\it top sheet}, two {\it middle sheets}, and four {\it bottom sheets}. 
A crossing is called a {\it positive} (respectively, {\it negative}) crossing if the pair of normal vectors $(\mathbf{v}_o, \mathbf{v}_u)$ of the over-arc and under-arcs coincides with the right-handed orientation of $\mathbb{R}^2$. Similarly, a triple point is called a {\it positive} (respectively, {\it negative}) triple point if the triple of normal vectors $(\mathbf{v}_t, \mathbf{v}_m, \mathbf{v}_b)$ of the top, middle, and bottom sheets coincides with the right-handed orientation of $\mathbb{R}^3$. 

For a link $L$ with at least two components, the {\it linking number} of $L$, denoted by $\mathrm{Lk}_{i,j}(L)$ for positive integers $i,j$ with $i \neq j$, is given by 

\[
\mathrm{Lk}_{i,j}(L)=\sum_{\tau \in X_2(i,j)} \epsilon(\tau), 
\]
where $X_2(i,j)$ is the set of crossings of a diagram of $L$ such that the over-arc (respectively, under-arcs) is from the $i$-th (respectively, $j$-th) component, and $\epsilon(\tau)=+1$ (respectively, $-1$) if $\tau$ is a positive (respectively, negative) crossing. 

For a surface-link $F$ with three components (or at least three components), the triple linking numbers $\mathrm{Tlk}_{i,j,k}(F)$ are defined as follows. 
Let $i,j, k$ be positive integers with $i \neq j$ and $j \neq k$. Let $X_3(i,j,k)$ be the set of triple points of a diagram of $F$ such that the top, middle, and bottom sheets are from the $i$-th, $j$-th and $k$-th components, respectively, called  triple points {\it of type $(i,j,k)$}, and for each triple point $\tau$, put $\epsilon(\tau)=+1$ (respectively, $-1$) if $\tau$ is a positive (respectively, negative) triple point; see Figure \ref{fig20251220}. Then, the \textit{triple linking number} between the $i$-th, $j$-th, and $k$-th components of $F$, denoted by $\mathrm{Tlk}_{i,j,k}(F)$, is given by 
\[
\mathrm{Tlk}_{i,j,k}(F)=\sum_{\tau \in X_3(i,j,k)} \epsilon(\tau). 
\]
It is known \cite{CJKLS} that $\mathrm{Tlk}_{k,j,i}(F)=-\mathrm{Tlk}_{i,j,k}(F)$ if $i,j,k$ are mutually distinct, and $\mathrm{Tlk}_{i,j,k}(F)=0$ otherwise. 

\begin{figure}[ht]
\includegraphics[height=4cm]{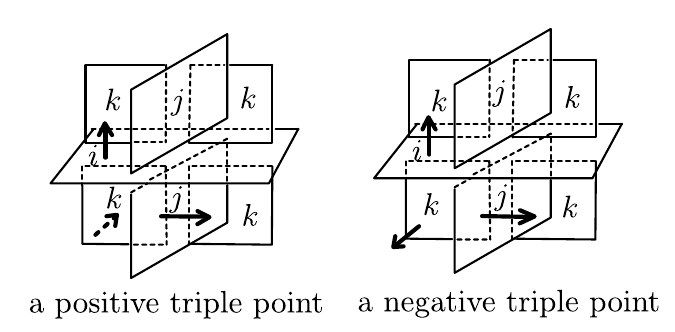}
\caption{A positive triple point and a negative triple point of type $(i,j,k)$.}\label{fig20251220}
\end{figure}

When basis $n$-braids $a$ and $b$ are pure $n$-braids, 
we define, for each $i=1,\ldots, n$, 
the $i$-th component of $\hat{a}$, $\hat{b}$, and $\mathcal{S}_n(a,b)$ to be the component corresponding to the $i$-th strand of $a$ and $b$. 
\begin{theorem}[{\cite[Theorem 1.1]{N2}}]\label{thm-tlk}
Let $(a,b)$ be pure $n$-braids which commute $(n \geq 3)$. 
Then the triple linking numbers of $F=\mathcal{S}_n(a,b)$ are computed as 
\begin{equation*}
\mathrm{Tlk}_{i,j,k}(F)=-\mathrm{Lk}_{i,j}(\hat{a})\mathrm{Lk}_{j,k}(\hat{b})+\mathrm{Lk}_{i,j}(\hat{b})\mathrm{Lk}_{j,k}(\hat{a}). 
\end{equation*}
In particular, 
\begin{equation*}
\begin{pmatrix}
\mathrm{Tlk}_{1,2,3}(F)\\
\mathrm{Tlk}_{2,3,1}(F)\\
\mathrm{Tlk}_{3,1,2}(F)\\
\end{pmatrix}
=
-\begin{pmatrix}
 \mathrm{Lk}_{3,1}(\hat{a}) \\
\mathrm{Lk}_{1,2}(\hat{a})\\
\mathrm{Lk}_{2,3}(\hat{a})
\end{pmatrix}
\times 
\begin{pmatrix}
 \mathrm{Lk}_{3,1}(\hat{b}) \\
\mathrm{Lk}_{1,2}(\hat{b})\\
\mathrm{Lk}_{2,3}(\hat{b})
\end{pmatrix}, 
\end{equation*}
where $\times$ denotes the outer product. 
\end{theorem}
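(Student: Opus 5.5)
The plan is to compute the triple points of a concrete diagram of $F=\mathcal{S}_n(a,b)$ coming from its chart description, and to organize the signed count by a ``potential function'' on braid words. The difference of that potential between the words $ab$ and $ba$ should produce the bilinear expression in the linking numbers. Since $\mathrm{Tlk}_{i,j,k}$ is an invariant, any convenient diagram may be used.

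\textbf{Step 1: a diagram with only chart-controlled triple points.} Project $N(T)=D^2\times T$ to $I\times T$ by projecting $D^2$ onto an interval. Then $F$ becomes a braided surface diagram, described by a chart $\Gamma$ on $T$ as in the Remark of Section \ref{sec3}. Cut $T$ along $\mathbf{m}\cup\mathbf{l}$ into a square. Its four sides read the words $a$, $b$, $a$, $b$, so the chart in the interior of the square realizes a sequence of moves transforming the word $ab$ into the word $ba$. These moves are free insertions and cancellations $\sigma_i^{\pm1}\sigma_i^{\mp1}$, far commutations $\sigma_i\sigma_j=\sigma_j\sigma_i$ with $|i-j|\ge 2$, and braid relations $\sigma_i\sigma_{i+1}\sigma_i=\sigma_{i+1}\sigma_i\sigma_{i+1}$. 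Only the last kind, the degree-$6$ vertices of the chart, create triple points of the diagram. The type $(i,j,k)$ of such a triple point is read off from the strand labels of the three strands involved; these labels are well defined because $a$ and $b$ are pure. Its sign is read off from the orientation of the vertex, that is, from which side of the relation is replaced by the other.

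\textbf{Step 2: a potential function.} For a braid word $w$ on labelled strands, list its crossings in order. For each crossing record the unordered pair of strands, which strand is over, and the sign. Define $\Phi_{i,j,k}(w)$ as a signed count of ordered pairs of crossings $c<c'$ in $w$ with the following property: $c$ is a crossing between strands $i$ and $j$, $c'$ is a crossing between strands $j$ and $k$, and the over/under data are consistent with $i$ above $j$ above $k$, each pair weighted by $\epsilon(c)\epsilon(c')$. I will then check the following behaviour under the moves.
\begin{itemize}
\item $\Phi_{i,j,k}$ is unchanged by far commutations, because those swap crossings with disjoint strand sets.
\item Insertion or cancellation of $\sigma^{\pm1}\sigma^{\mp1}$ changes $\Phi_{i,j,k}$ by contributions that cancel in pairs.
\item A braid relation of type $(i,j,k)$ changes $\Phi_{i,j,k}$ by exactly $\pm1$, with the same sign as the corresponding triple point. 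Only the relative order of the $(i,j)$-crossing and the $(j,k)$-crossing among the three crossings flips.
\end{itemize}
Summing over the chart gives
\[
\mathrm{Tlk}_{i,j,k}(F)=\pm\bigl(\Phi_{i,j,k}(ba)-\Phi_{i,j,k}(ab)\bigr).
\]

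\textbf{Step 3: evaluating the difference.} In $\Phi(ab)-\Phi(ba)$, pairs of crossings lying both in $a$ or both in $b$ cancel. The surviving terms are the pairs with one crossing in $a$ and one in $b$. They give
\[
\bigl(\text{signed count of $(i,j)$-crossings of $a$}\bigr)\cdot\bigl(\text{signed count of $(j,k)$-crossings of $b$}\bigr)
\]
minus the same expression with $a$ and $b$ interchanged. For a pure braid, the signed count of crossings of strand $i$ over strand $j$ equals $\mathrm{Lk}_{i,j}$ of the closure, because crossings of the two orders contribute equally. This yields $-\mathrm{Lk}_{i,j}(\hat a)\mathrm{Lk}_{j,k}(\hat b)+\mathrm{Lk}_{i,j}(\hat b)\mathrm{Lk}_{j,k}(\hat a)$, up to fixing the global sign. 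The cross-product form for $n=3$ then follows by writing out the three cyclic types and using $\mathrm{Lk}_{i,j}=\mathrm{Lk}_{j,i}$.

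\textbf{Main obstacle.} The hardest part will be Step 2: choosing the over/under convention in $\Phi$ so that it matches the top/middle/bottom convention for triple points, and verifying the local sign at each degree-$6$ vertex. A related subtlety is that a pure-braid crossing between strands $i$ and $j$ may have either strand over. My first attempt is to define $\Phi$ using only the unordered strand pairs and signs, and then fix the sign of the identity by checking one example, such as $\mathcal{S}_3(\sigma_1^2,\sigma_2^2\cdot\text{central})$, or a simple case where the answer is known from \cite{CJKLS}.
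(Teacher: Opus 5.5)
Your argument is correct in outline, and it takes a genuinely different route from the one behind this statement. The paper does not reprove the theorem; it quotes it from \cite{N2}. The remark in the next subsection indicates that the proof there is built on the decomposition $P_3=UZ\cong U\times Z$: a commuting pair of pure $3$-braids is written as $a=d^{l_1}\tilde{\Delta}^{m_1}$, $b=d^{l_2}\tilde{\Delta}^{m_2}$, and the computation is carried out for this normal form. Your potential $\Phi_{i,j,k}$ works instead for an arbitrary chart and arbitrary $n$, and needs nothing about centralizers in $P_3$. It also explains structurally why the answer is an antisymmetric bilinear form. In $\Phi_{i,j,k}(ba)-\Phi_{i,j,k}(ab)$ only the cross pairs between $a$ and $b$ survive; purity of $a$ and $b$ is what makes the strand labels in each block the same in $ab$ and $ba$. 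The identity $\sum_{i \text{ over } j}\epsilon=\mathrm{Lk}_{i,j}$ then converts these cross terms into linking numbers. The normal-form approach, in exchange, confines the sign bookkeeping to one concrete family of braids and meshes with the $(\alpha,\beta,\delta)$ description of Proposition \ref{prop4-2a}.

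Your first two bullets are right. So is the fact that a relation of type $(i,j,k)$ moves $\Phi_{i,j,k}$ by $\pm1$ while relations of other types do not, since only the middle strand is under in one crossing and over in another. However, the sign correlation in your third bullet is the actual content, and it is not yet argued; the order flip alone only gives $|\Delta\Phi_{i,j,k}|=1$. It can be proved uniformly:
\begin{itemize}
\item Near a vertex, write the three sheets as graphs $r=f_X$ over $T$ and set $\alpha=\nabla(f_T-f_M)$, $\beta=\nabla(f_M-f_B)$.
\item The triple point sign is a convention constant times $\mathrm{sign}\det(\alpha,\beta)$.
\item Replacing a straight path $x_0+sd$ by $-x_0+sd$ changes $\Phi_{T,M,B}$ by $-\mathrm{sign}\det(\alpha,\beta)\,\mathrm{sign}\det(x_0,d)$. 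The crossing-sign convention drops out, because only the product $\epsilon(c)\epsilon(c')$ enters.
\end{itemize}
Hence you must use an explicitly monotone sweep, so that every vertex is crossed in the same sense. For example, take arcs from $(0,0)$ to $(1,1)$ through a point running along the anti-diagonal from $(1,0)$ to $(0,1)$. The leftover global sign then depends only on orientation conventions, which you have to match with those of \cite{N2}.

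Two smaller points. First, your test case $\mathcal{S}_3(\sigma_1^2,\sigma_2^2\cdot\text{central})$ is not admissible: $\sigma_1^2$ and $\sigma_2^2$ generate a free group, so these braids do not commute. An admissible check is $\mathcal{S}_3(\sigma_1^2,\tilde{\Delta})$, with predicted $\mathrm{Tlk}_{1,2,3}=-1$, but verifying it still requires computing vertex signs geometrically, so it is no shortcut. Second, the fallback with unordered strand pairs is not a drop-in replacement. That potential jumps at braid relations of all six types on $\{i,j,k\}$, so it only yields a combination of $\mathrm{Tlk}_{i,j,k}$, $\mathrm{Tlk}_{j,k,i}$ and $\mathrm{Tlk}_{k,i,j}$, not $\mathrm{Tlk}_{i,j,k}$ itself.
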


\subsection{Triple linking numbers of $\mathcal{S}_3(a,b)$}
We consider the case of degree 3. 
Put $\Sigma_1=\sigma_1^2$ and $\Sigma_2=\sigma_2^2$. Let $U$ be the free group with two generators $\Sigma_1$ and $\Sigma_2$, and let $Z$ be an infinite cyclic group generated by $\tilde{\Delta}$.  In the proof of \cite[Theorem 1.1]{N2}, we showed that 
the pure 3-braid group $P_3$ is decomposed as the internal direct product $U \times Z$ via $(d, h) \mapsto dh$ ($d \in U$, $h \in Z$): 
\[
P_3=UZ
\cong U \times Z. 
\]

Hence, any pure 3-braids $(a,b)$ which commute are presented by  
\[
a=d^{l_1} \tilde{\Delta}^{m_1},\ b=d^{l_2} \tilde{\Delta}^{m_2},
\]
where $d \in U$ and $l_1, l_2, m_1, m_2$ are integers. 
For the presentation of a $3$-braid $c$ in $P_3=UZ$ and $\Sigma \in \{\Sigma_1, \Sigma_2, \tilde{\Delta}\}$, the {\it algebraic sum} of the numbers of $\Sigma$'s in $c$ is the total sum of the number of $\Sigma$ minus the number of $\Sigma^{-1}$.  

\begin{proposition}\label{prop4-2a}
For a pure 3-braid $c$, 
let $\alpha$, $\beta$, $\delta$ be the algebraic sums of the numbers of $\Sigma_1$'s, $\Sigma_2$'s and $\tilde{\Delta}$'s in the  presentation of $c$ in $P_3=UZ$, respectively. 
Then we have 
\begin{equation*}
\begin{pmatrix}
\mathrm{Lk}_{3,1}(\hat{c})\\
\mathrm{Lk}_{1,2}(\hat{c})\\
\mathrm{Lk}_{2,3}(\hat{c})
\end{pmatrix}
=M \begin{pmatrix}
\alpha\\
\beta\\
\delta
\end{pmatrix}, 
\end{equation*}
where 
$
M=\begin{pmatrix}
0 & 0 & 1 \\
1& 0 & 1\\
0 & 1 & 1
\end{pmatrix}. 
$
\end{proposition}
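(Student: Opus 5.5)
Linking numbers of closures of pure braids are additive under braid multiplication: for pure $n$-braids $c_1,c_2$, the closure of $c_1c_2$ has diagram crossings equal to the union of those of $c_1$ and $c_2$. Since each strand returns to its own position, the component labels agree, so $\mathrm{Lk}_{i,j}(\widehat{c_1c_2})=\mathrm{Lk}_{i,j}(\hat c_1)+\mathrm{Lk}_{i,j}(\hat c_2)$. Consequently the map $c\mapsto (\mathrm{Lk}_{3,1}(\hat c),\mathrm{Lk}_{1,2}(\hat c),\mathrm{Lk}_{2,3}(\hat c))$ is a homomorphism $P_3\to\mathbb{Z}^3$. It factors through the abelianization, and $\mathrm{Lk}_{i,j}(\widehat{c^{-1}})=-\mathrm{Lk}_{i,j}(\hat c)$. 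Writing $c$ as a word in $\Sigma_1^{\pm1},\Sigma_2^{\pm1},\tilde{\Delta}^{\pm1}$ via the decomposition $P_3=UZ$, the linking vector is therefore $\alpha\,v(\Sigma_1)+\beta\,v(\Sigma_2)+\delta\,v(\tilde{\Delta})$, where $v(\cdot)$ denotes the linking vector of a single generator. This is well defined because the $U\times Z$ normal form is unique and the homomorphism depends only on the exponent sums. Note that $\mathrm{Lk}_{i,j}$ is symmetric for classical links, so the ordering of indices causes no sign issues.

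It remains to compute the three columns of $M$.
\begin{itemize}
\item For $\Sigma_1=\sigma_1^2$: both crossings are between strands $1$ and $2$, and both are positive, so the closure links components $1$ and $2$ once. Hence $v(\Sigma_1)=(0,1,0)^T$.
\item For $\Sigma_2=\sigma_2^2$: similarly the two positive crossings are between strands $2$ and $3$, giving $v(\Sigma_2)=(0,0,1)^T$.
\item For $\tilde{\Delta}=(\sigma_1\sigma_2)^3$: the full twist has $6$ positive crossings, and every pair of strands crosses exactly twice. Each pairwise linking number is therefore $1$, giving $v(\tilde{\Delta})=(1,1,1)^T$.
\end{itemize}
Assembling these columns gives exactly $M$.

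The main point to check is the sign convention: with the paper's definition, a crossing from $\sigma_i$ must be positive and each crossing must contribute $\pm1$ to $\mathrm{Lk}$ (rather than $\pm\tfrac12$). The paper's $\mathrm{Lk}_{i,j}$ counts only the crossings where $i$ is over, and for a pure $\sigma_i^2$ exactly one of the two crossings has each strand on top, so $\mathrm{Lk}_{i,j}=1$ under either reading. The computation is consistent with $\mathrm{Lk}(\widehat{\sigma_1^2})=1$ for the positive Hopf link. For $\tilde{\Delta}$, I would verify the count by tracking the strand permutation through $(\sigma_1\sigma_2)^3$: each pair of strands crosses twice, once with each strand over, so the over-count per ordered pair is $1$.
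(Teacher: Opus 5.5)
Your proposal is correct and follows the same route as the paper: additivity of linking numbers along the decomposition $P_3=UZ$, then computing the linking vectors of $\Sigma_1$, $\Sigma_2$ and $\tilde{\Delta}$ to get the three columns of $M$. Your additional checks fill in details the paper leaves implicit, namely that each ordered pair of strands has exactly one over-crossing in $\sigma_i^2$ and in $(\sigma_1\sigma_2)^3$, so the paper's one-sided $\mathrm{Lk}_{i,j}$ gives $1$.
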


\begin{proof}
The linking number of $\hat{c}$ is obtained by 
\[
\mathrm{Lk}_{i,j}(\hat{c})=\alpha \cdot \mathrm{Lk}_{i,j}(\hat{\Sigma_1})+\beta \cdot  \mathrm{Lk}_{i,j}(\hat{\Sigma_2}) +\delta \cdot \mathrm{Lk}_{i,j}(\hat{\tilde{\Delta}}). 
\]
Since 
\begin{eqnarray*}
\mathrm{Lk}_{3,1}(\hat{\Sigma_1})=0, & \mathrm{Lk}_{3,1}(\hat{\Sigma_2})=0, & \mathrm{Lk}_{3,1}(\hat{\tilde{\Delta}})=1, \\
\mathrm{Lk}_{1,2}(\hat{\Sigma_1})=1, & \mathrm{Lk}_{1,2}(\hat{\Sigma_2})=0, & \mathrm{Lk}_{1,2}(\hat{\tilde{\Delta}})=1, \\
\mathrm{Lk}_{2,3}(\hat{\Sigma_1})=0, & \mathrm{Lk}_{2,3}(\hat{\Sigma_2})=1, & \mathrm{Lk}_{2,3}(\hat{\tilde{\Delta}})=1, 
\end{eqnarray*}
we have the required result. 
\end{proof}

Theorem \ref{thm-tlk} and Proposition \ref{prop4-2a} imply the following

\begin{corollary}
Let $(a,b)$ be pure 3-braids which commute. 
Put $F=\mathcal{S}_3(a,b)$.  
For $c=a,b$, let $\alpha_c$, $\beta_c$, $\delta_c$ be the algebraic sums 
of the numbers of $\Sigma_1$'s, $\Sigma_2$'s and $\tilde{\Delta}$'s in the presentation of $c$ in $P_3=UZ$, respectively. 
Then the triple linking numbers are computed as 
\begin{equation*}
\begin{pmatrix}
\mathrm{Tlk}_{1,2,3}(F)\\
\mathrm{Tlk}_{2,3,1}(F)\\
\mathrm{Tlk}_{3,1,2}(F)\\
\end{pmatrix}
=
-M \begin{pmatrix}
\alpha_a \\
\beta_a\\
\delta_a
\end{pmatrix}
\times M \begin{pmatrix}
\alpha_b \\
\beta_b\\
\delta_b
\end{pmatrix}, 
\end{equation*}
where $M$ is the matrix given in Proposition \ref{prop4-2a} and $\times$ denotes the outer product. 
\end{corollary}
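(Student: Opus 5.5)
This corollary follows by substituting Proposition \ref{prop4-2a} into Theorem \ref{thm-tlk}, so the plan is a two-line composition rather than a new argument. First, I would note that $a$ and $b$ are pure $3$-braids. Theorem \ref{thm-tlk} therefore applies with $n=3$ and gives
\[
\begin{pmatrix}
\mathrm{Tlk}_{1,2,3}(F)\\
\mathrm{Tlk}_{2,3,1}(F)\\
\mathrm{Tlk}_{3,1,2}(F)
\end{pmatrix}
=
-\mathbf{L}(\hat{a}) \times \mathbf{L}(\hat{b}),
\]
where I abbreviate $\mathbf{L}(\hat{c})$ for the column vector $(\mathrm{Lk}_{3,1}(\hat{c}), \mathrm{Lk}_{1,2}(\hat{c}), \mathrm{Lk}_{2,3}(\hat{c}))^{T}$.

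Second, for $c=a,b$, I would write $c$ in the decomposition $P_3=UZ$. By the discussion preceding Proposition \ref{prop4-2a}, $c=d^{l}\tilde{\Delta}^{m}$ with $d\in U$, and the algebraic sums $\alpha_c,\beta_c,\delta_c$ are well defined. Proposition \ref{prop4-2a} then gives $\mathbf{L}(\hat{c})=M(\alpha_c,\beta_c,\delta_c)^{T}$. Substituting both identities into the display above yields exactly the stated formula.

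The only point needing a moment's care is that the algebraic sums are well defined. For $U$ this holds because $U$ is free on $\Sigma_1,\Sigma_2$, so exponent sums are homomorphisms $U\to\mathbb{Z}$. The product $P_3\cong U\times Z$ is direct, so $\delta_c$ is determined as well. Moreover, $c\mapsto \mathrm{Lk}_{i,j}(\hat{c})$ is a homomorphism $P_3\to\mathbb{Z}$, which is what the proof of Proposition \ref{prop4-2a} uses implicitly. Neither of these facts is a genuine obstacle, and I expect no step to be hard.
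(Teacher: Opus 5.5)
Your proposal is correct and is exactly the paper's argument: the paper obtains this corollary by substituting Proposition \ref{prop4-2a} for $c=a,b$ into the outer-product formula of Theorem \ref{thm-tlk}. Your extra checks are sound but not needed for the proof: well-definedness of $\alpha_c,\beta_c,\delta_c$ via $P_3=U\times Z$ with $U$ free, and additivity of $\mathrm{Lk}_{i,j}$ on $P_3$.
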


\begin{example}\label{eg4-4}
When $a=\Sigma_1^{n_1} \Sigma_2^{n_2}$ and $b=\tilde{\Delta}^m$ for integers $n_1, n_2$ and $m$, the triple linking numbers of $F=\mathcal{S}_3(a,b)$ are computed as follows: 
\begin{equation*}
\begin{pmatrix}
\mathrm{Tlk}_{1,2,3}(F)\\
\mathrm{Tlk}_{2,3,1}(F)\\
\mathrm{Tlk}_{3,1,2}(F)\\
\end{pmatrix}
=-m \cdot \begin{pmatrix}
0\\
n_1 \\
n_2 
\end{pmatrix}
\times 
\begin{pmatrix}
1 \\
1 \\
1
\end{pmatrix}.
\end{equation*}
\end{example}

\subsection{Proof of Corollary \ref{cor1-2}}

We give relations of the triple linking numbers between $\mathcal{S}_3(a,b)$ and its orientation-reversed/mirror image. 
\begin{theorem}\label{thm7-1}
Let $(a, b)$ be pure $3$-braids which commute. Then 
the triple linking numbers of $F=\mathcal{S}_3(a,b)$ satisfy the following. 
\begin{eqnarray*}
&&\mathrm{Tlk}_{i,j,k}(-F)=-\mathrm{Tlk}_{i,j,k}(F), \label{eq7-1}\\
&&\mathrm{Tlk}_{i,j,k}(F^*)=\mathrm{Tlk}_{i,j,k}(F), \\
&&\mathrm{Tlk}_{i,j,k}(-F^*)=-\mathrm{Tlk}_{i,j,k}(F). 
\end{eqnarray*}
\end{theorem}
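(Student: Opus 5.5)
The plan is to combine Theorem \ref{theorem3-2} with Theorem \ref{thm-tlk}. Theorem \ref{theorem3-2} rewrites $-F$, $F^*$ and $-F^*$ as torus-covering $T^2$-links with explicit basis braids. Theorem \ref{thm-tlk} expresses $\mathrm{Tlk}_{i,j,k}$ through the linking numbers of the closures of the basis braids. We then track how those linking numbers change.

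By \eqref{eq3-1}, \eqref{eq3-2} and \eqref{eq3-3},
\[
-F\sim\mathcal{S}_3(-a,b^*),\quad F^*\sim\mathcal{S}_3(a^*,b^*),\quad -F^*\sim\mathcal{S}_3(a,-b^*)=\mathcal{S}_3(a,b^{-1}),
\]
using Lemma \ref{lemma3-3}. These braids are again pure and commuting. Indeed, pureness is clear from the word descriptions in Lemma \ref{lemma3-3}, since reversing a word or flipping its exponents does not change the induced permutation. Commutativity holds because $c\mapsto c^*$ is a group automorphism and $c\mapsto -c$ is an anti-automorphism, so commuting pairs go to commuting pairs. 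Hence Theorem \ref{thm-tlk} applies to each of them.

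Next we compute the linking numbers of the closures, with the $i$-th component still given by the $i$-th strand.
\begin{itemize}
\item The closure of $-c$ is $\hat c$ with every component reversed, and reversing both components preserves linking numbers, so $\mathrm{Lk}_{i,j}(\widehat{-c})=\mathrm{Lk}_{i,j}(\hat c)$.
\item The closure of $c^*$ is the mirror image, and every crossing changes sign, so $\mathrm{Lk}_{i,j}(\widehat{c^*})=-\mathrm{Lk}_{i,j}(\hat c)$.
\item The closure of $c^{-1}$ is the combination of the two, so $\mathrm{Lk}_{i,j}(\widehat{c^{-1}})=-\mathrm{Lk}_{i,j}(\hat c)$.
\end{itemize}
The formula $\mathrm{Tlk}_{i,j,k}=-\mathrm{Lk}_{i,j}(\hat a)\mathrm{Lk}_{j,k}(\hat b)+\mathrm{Lk}_{i,j}(\hat b)\mathrm{Lk}_{j,k}(\hat a)$ is bilinear in the linking data of $\hat a$ and of $\hat b$. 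So for $-F$ exactly one factor changes sign, giving $-\mathrm{Tlk}$. For $F^*$ both factors change sign, giving $\mathrm{Tlk}$. For $-F^*$ only the $b$-factor changes sign, giving $-\mathrm{Tlk}$.

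The main obstacle is the bookkeeping of component labels. The triple linking number is invariant under equivalences that carry the $i$-th component to the $i$-th component, so we must check that the homeomorphisms in the proof of Theorem \ref{theorem3-2} do this. Concretely, the $i$-th strand of the new basis braids in the base fiber $D^2\times\{x_0\}$ must be the image of the $i$-th strand of $F$. These maps (for instance $(x,y,z,t)\mapsto(x,y,-z,-t)$) act on the base fiber by a reflection, which could permute the base points. To handle this, I would first isotope $F$ so that the $n$ base points lie on the fixed line of that reflection, e.g. along the $x$-axis in $D^2\times\{x_0\}$. Then each base point, and hence each strand label, is preserved, and the reading of the braid words in Lemma \ref{lemma3-3} is unaffected.

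If a permutation of labels did occur, I would instead use the known antisymmetry $\mathrm{Tlk}_{k,j,i}=-\mathrm{Tlk}_{i,j,k}$ to compare the results. However, choosing the base points on the fixed line avoids this, and I expect it to suffice.
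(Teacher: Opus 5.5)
Your proposal is correct and is essentially the paper's proof: the paper likewise rewrites $-F$, $F^*$, $-F^*$ via Theorem~\ref{theorem3-2}, uses Lemma~\ref{lemma3-3} to get $\mathrm{Lk}_{i,j}(-\hat{c})=\mathrm{Lk}_{i,j}(\hat{c})$ and $\mathrm{Lk}_{i,j}(\widehat{c^*})=\mathrm{Lk}_{i,j}(-\widehat{c^*})=-\mathrm{Lk}_{i,j}(\hat{c})$, and concludes from the bilinear formula of Theorem~\ref{thm-tlk}. Your extra checks (pureness, commutativity, and that the maps preserve component labels once the base points lie on the fixed line of the fiber reflection) are left implicit in the paper; just note that commutativity of the mixed pair $(-a,b^*)$ follows from $-a=(a^*)^{-1}$, not from the (anti-)automorphism remark alone.
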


\begin{proof}
By Lemma \ref{lemma3-3}, 
\begin{align*}
&\mathrm{Lk}_{i,j}(-\hat{c})=\mathrm{Lk}_{i,j}(\hat{c}), \\
&\mathrm{Lk}_{i,j}(\hat{c^*})=\mathrm{Lk}_{i,j}(-\hat{c^*})=-\mathrm{Lk}_{i,j}(\hat{c})
\end{align*} for $c=a,b$. 
Thus, Theorems \ref{theorem3-2} and \ref{thm-tlk}  imply the required result. 
\end{proof}

\begin{theorem}\label{thm7-2}
Let $(a, b)$ be pure $3$-braids which commute. 
Let $F=\mathcal{S}_3(a,b)$. Let $F'$ be a surface-link obtained from $F$ by changing the numbering of the components. Assume that $\hat{a}$ and $\hat{b}$ have non-trivial linking numbers, and for any given real number $\lambda \neq 0$, $\mathrm{Lk}_{i,j}(\hat{a})\neq \lambda \cdot \mathrm{Lk}_{i,j}(\hat{b})$ for some $i,j \in \{1,2,3\}$. 
If $\mathrm{Tlk}_{i,j,k}(-F')=\mathrm{Tlk}_{i,j,k}(F)$ for any $i,j,k$ or $\mathrm{Tlk}_{i,j,k}(-(F')^*)=\mathrm{Tlk}_{i,j,k}(F)$ for any $i,j,k$, then $\mathrm{Lk}_{s,t}(\hat{a})=\mathrm{Lk}_{t,u}(\hat{a})$ and  $\mathrm{Lk}_{s,t}(\hat{b})=\mathrm{Lk}_{t,u}(\hat{b})$ for some $s,t,u$ with $\{s,t,u\}=\{1,2,3\}$.  

\end{theorem}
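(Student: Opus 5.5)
The plan is to translate every hypothesis into a statement about the vector $T(F)=(\mathrm{Tlk}_{1,2,3}(F),\mathrm{Tlk}_{2,3,1}(F),\mathrm{Tlk}_{3,1,2}(F))^{T}$, and then use the outer-product formula of Theorem \ref{thm-tlk}. Write $A=(\mathrm{Lk}_{3,1}(\hat a),\mathrm{Lk}_{1,2}(\hat a),\mathrm{Lk}_{2,3}(\hat a))^{T}$ and define $B$ likewise for $b$, so that $T(F)=-A\times B$. The assumptions (non-trivial linking numbers and $A\neq\lambda B$ for all $\lambda\neq 0$) should give that $A$ and $B$ are linearly independent. Hence $T(F)\neq 0$, and $T(F)^{\perp}=\mathrm{span}(A,B)$. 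Everything else is bookkeeping of how renumbering and orientation reversal act on $T$.

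\textbf{Step 1 (reduce to a fixed-point condition).} Let $\pi$ be the permutation renumbering the components, so that $\mathrm{Tlk}_{i,j,k}(F')=\mathrm{Tlk}_{\pi(i),\pi(j),\pi(k)}(F)$. By Theorem \ref{thm7-1} applied to $F'$ (which is again $\mathcal{S}_3(a',b')$ with relabelled strands), both $-F'$ and $-(F')^*$ have triple linking numbers $-\mathrm{Tlk}(F')$. In either case the hypothesis therefore reads
\[
\mathrm{Tlk}_{i,j,k}(F)=-\mathrm{Tlk}_{\pi(i),\pi(j),\pi(k)}(F)\quad\text{for all } i,j,k.
\]
Using the antisymmetry $\mathrm{Tlk}_{k,j,i}=-\mathrm{Tlk}_{i,j,k}$ from \cite{CJKLS}, this becomes $T=-\epsilon(\pi)P_\pi T$. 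Here $P_\pi$ is the permutation matrix induced on the three cyclic triples and $\epsilon(\pi)$ is the sign of $\pi$.

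\textbf{Step 2 (case analysis on $\pi$).} If $\pi$ is even, the condition is $T=-P_\pi T$. For $\pi=\mathrm{id}$ this gives $T=0$ directly. For a $3$-cycle, chasing the three equations $T_1=-T_2$, $T_2=-T_3$, $T_3=-T_1$ gives $T_1=-T_1$, so again $T=0$. Both contradict $T\neq0$. If $\pi$ is a transposition, fixing one index, a direct check (e.g.\ $\pi=(1\,3)$ gives $T_2=T_3$) shows the condition is $T=P T$ for a coordinate transposition $P$. In other words, two specified entries of $T$ coincide. Equivalently, $T\perp e_x-e_y$, so $e_x-e_y\in\mathrm{span}(A,B)$, where $e_x,e_y$ are the standard basis vectors for the two swapped coordinates. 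Cross-checking via $A'=PA$, $B'=PB$ and $(PA)\times(PB)=\det(P)\,P(A\times B)$ gives the same condition, which I will use as a consistency check on the signs.

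\textbf{Step 3 (extract the conclusion), the main obstacle.} It remains to convert "$e_x-e_y\in\mathrm{span}(A,B)$", together with the coordinate identification of Proposition \ref{prop4-2a}, into the asserted equalities $\mathrm{Lk}_{s,t}(\hat a)=\mathrm{Lk}_{t,u}(\hat a)$ and $\mathrm{Lk}_{s,t}(\hat b)=\mathrm{Lk}_{t,u}(\hat b)$. I expect this step to be delicate. A bare fixed-point condition on $A\times B$ constrains only the plane spanned by $A$ and $B$, not each vector separately.

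My first attempt would be to use that $(a,b)$ commute, so that $a=d^{l_1}\tilde\Delta^{m_1}$ and $b=d^{l_2}\tilde\Delta^{m_2}$ in $P_3=UZ$. Then $A=M(l_1\alpha_d,l_1\beta_d,m_1)^{T}$ and $B=M(l_2\alpha_d,l_2\beta_d,m_2)^{T}$. Both vectors then lie in the span of $M(\alpha_d,\beta_d,0)^{T}$ and $(1,1,1)^{T}$, so $e_x-e_y$ must be a combination of these two vectors. Since $(1,1,1)^{T}$ contributes equally to every coordinate, this should force the $x$- and $y$-coordinates of $M(\alpha_d,\beta_d,0)^{T}$ to agree, and hence the corresponding linking numbers of $\hat a$ and of $\hat b$ to agree. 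I must check this carefully against examples such as $a=\Sigma_1\Sigma_2^{-1}$, $b=\tilde\Delta$. If the statement is meant with this structure, the computation closes the proof; otherwise the conclusion may need to be read for $d$ rather than for $a$ and $b$ individually.
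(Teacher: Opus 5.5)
Your Steps 1 and 2 are correct, and more careful than the paper's argument. The assumptions make $\vec{lk}(\hat a)$ and $\vec{lk}(\hat b)$ linearly independent, so $T\neq 0$. Hence no even renumbering can satisfy the hypothesis, and a transposition gives exactly that two entries of $T$ coincide (for $\pi=(1\,3)$: $\mathrm{Tlk}_{2,3,1}(F)=\mathrm{Tlk}_{3,1,2}(F)$).

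The gap is Step 3, and it cannot be closed, because your heuristic there is backwards. Write $e_x-e_y=\mu D+\kappa(1,1,1)^{T}$ with $D=M(\alpha_d,\beta_d,0)^{T}$ and read off the three coordinates. This gives $\mu(D_x-D_z)=1$ and $\mu(D_y-D_z)=-1$, so the entries of $D$ are pairwise distinct, with $D_z=\frac{1}{2}(D_x+D_y)$. Since $\vec{lk}(\hat a)=l_1D+m_1(1,1,1)^{T}$ and $\vec{lk}(\hat b)=l_2D+m_2(1,1,1)^{T}$, they have equal entries in a common pair of coordinates only if $l_1=l_2=0$, which contradicts independence. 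Your fallback of reading the conclusion for $d$ fails for the same reason.

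The clash is visible even without the commuting structure. The conclusion says $\vec{lk}(\hat a),\vec{lk}(\hat b)\in\{x_s=x_t\}$, i.e.\ $T$ is a nonzero multiple of $e_s-e_t$, whose entries $c,-c,0$ are pairwise distinct. Step 2 forces two equal entries. So under the standing assumptions, the hypothesis of the statement implies the negation of its conclusion.

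Your own test case is a counterexample. For $a=\Sigma_1\Sigma_2^{-1}=\sigma_1^2\sigma_2^{-2}$ and $b=\tilde{\Delta}$, one has $\vec{lk}(\hat a)=(0,1,-1)^{T}$ and $\vec{lk}(\hat b)=(1,1,1)^{T}$, so all assumptions hold, and $T(F)=(-2,1,1)^{T}$. Let $F'$ be $F$ with components $1$ and $3$ interchanged. Then $\mathrm{Tlk}_{2,3,1}(-F')=-\mathrm{Tlk}_{2,1,3}(F)=\mathrm{Tlk}_{3,1,2}(F)=1=\mathrm{Tlk}_{2,3,1}(F)$, and likewise for $(3,1,2)$. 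The triple $(1,2,3)$ is automatic, and the remaining triples follow by antisymmetry or vanish. Hence $\mathrm{Tlk}_{i,j,k}(-F')=\mathrm{Tlk}_{i,j,k}(F)$ for all $i,j,k$, yet the linking numbers of $\hat a$ are $0,1,-1$.

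The paper's proof reaches the conclusion via the claim that the multiset $\{\mathrm{Tlk}_{1,2,3}(F),\mathrm{Tlk}_{2,3,1}(F),\mathrm{Tlk}_{3,1,2}(F)\}$ equals its negative, which would put $T$ on a line $\mathbb{R}(e_s-e_t)$. That deduction is valid only for even renumberings. An odd renumbering already negates this multiset, so the hypothesis imposes no multiset condition at all (here $\{-2,1,1\}\neq\{2,-1,-1\}$).

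This $F$ is the paper's own example for Corollary~\ref{cor1-2} with $n_1=1$, $n_2=-1$, $m=1$, and it is in fact reversible. By (1) of Theorem~\ref{theorem3-2} and (E1), $-F\sim\mathcal{S}_3(\sigma_1^{-2}\sigma_2^{2},\tilde{\Delta})$. Since $\sigma_1^{-2}\sigma_2^{2}$ is conjugate to $\sigma_1^{2}\sigma_2^{-2}$, (E2) gives $-F\sim F$.

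What your Steps 1--2 do establish is the correct replacement: the hypothesis holds for some renumbering if and only if two of $\mathrm{Tlk}_{1,2,3}(F)$, $\mathrm{Tlk}_{2,3,1}(F)$, $\mathrm{Tlk}_{3,1,2}(F)$ coincide.
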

\begin{proof}
By Theorem \ref{thm7-1}, if $\mathrm{Tlk}_{i,j,k}(-F')=\mathrm{Tlk}_{i,j,k}(F)$ for any $i,j,k$ or $\mathrm{Tlk}_{i,j,k}(-(F')^*)=\mathrm{Tlk}_{i,j,k}(F)$ for any $i,j,k$, then the multi-set 
\[
Tlk(F) \coloneqq \{\mathrm{Tlk}_{1,2,3}(F), \mathrm{Tlk}_{2,3,1}(F), \mathrm{Tlk}_{3,1,2}(F)\}
\]
 satisfies $Tlk(F)=-Tlk(F)$, where $-Tlk(F)$ is the multi-set obtained from taking $-\mu$ for any $\mu \in Tlk(F)$; thus the vector 
 \[
 \vec{Tlk}(F) \coloneqq ( \mathrm{Tlk}_{1,2,3}(F), \mathrm{Tlk}_{2,3,1}(F), \mathrm{Tlk}_{3,1,2}(F))^T
 \]
  is in the subspace $W \coloneqq  \{(x_1, x_2, x_3)^T \mid x_s=-x_t,\, x_u=0\}$ for some $s,t,u \in \{1,2,3\}$ with $\{s,t,u\}=\{1,2,3\}$ in the vector space $\mathbb{R}^3$. 
Since the triple linking numbers are presented as the outer product of the linking numbers of $\hat{a}$ and $\hat{b}$ (Theorem \ref{thm-tlk}), and the vectors $\vec{lk}(\hat{c}) \coloneqq (\mathrm{Lk}_{3,1}(\hat{c}), \mathrm{Lk}_{1,2}(\hat{c}),\mathrm{Lk}_{2,3}(\hat{c}))^T$ $(c=a,b)$ are non-zero vectors by the assumption, $\vec{lk}(\hat{a})$ and $\vec{lk}(\hat{b})$ must be contained in $W^\perp=\{(x_1, x_2, x_3)^T \mid x_s=x_t\}$. Thus we have the required result.
\end{proof}

\begin{proof}[Proof of Corollary \ref{cor1-2}]
By taking the contraposition of Theorem \ref{thm7-2}, we have the required result. 
\end{proof}

\begin{example}
Examples of pairs of pure $3$-braids $(a,b)$ satisfying the conditions of Corollary \ref{cor1-2} are given as follows. 
Let $n_1$ and $n_2$ be non-zero integers with $n_1 \neq n_2$, and let $m$ be any non-zero integer. 
Then the pair $(a,b)$ defined as in Example \ref{eg4-4}, i.e.,
$a=\sigma_1^{2n_1} \sigma_2^{2n_2}$ and $b=\tilde{\Delta}^m$, satisfies the conditions of Corollary \ref{cor1-2}.  
\end{example}

\section{Invariants of torus-covering $T^2$-links of degree 3. \newline 
(II) Number of Fox $p$-colorings}\label{sec5}

In this section, we review quandle colorings; in particular, $p$-colorings \cite{CKS, EN, Fox,  Joyce}. 
We investigate the number of $p$-colorings of a 3-braid for several examples (Propositions \ref{prop5-2-28} and \ref{prop5-2}). In this paper, we assume that $p$ is an odd prime. 

\subsection{Quandles}

 A {\it quandle} is a set $X$ equipped with a binary operation $*: X \times X \to X$ satisfying the following axioms. 
 
 \begin{enumerate}
 \item (Idempotency)
 For any $x \in X$, $x*x=x$. 
 
 \item (Right invertibility)
 For any $y,z \in X$, there exists a unique $x \in X$ such that $x*y=z$. 
 
 \item (Right self-distributivity)
 For any $x,y,z \in X$, $(x*y)*z=(x*z)*(y*z)$. 
 \end{enumerate}
 When $X$ consists of a finite number of elements, $X$ is called a \textit{finite quandle}.

 We review quandle colorings. Let $X$ be a finite quandle. 
 Let $L$ be a classical link and let $F$ be a surface-link, respectively. 
Let $D$ be a diagram of $L$ or $F$, and let $B(D)$ be the set of arcs or sheets of $D$. 
An {\it $X$-coloring} of $D$ is a map $C: B(D) \to X$ satisfying the coloring rule around each crossing or double point curve as shown in Figure \ref{fig-quandle}. For an $X$-coloring $C$, the image of an arc or sheet by $C$ is called a {\it color}. We say that an $X$-coloring is {\it trivial} (respectively, {\it non-trivial}) if colors of arcs or sheets consist of a single color (respectively, at least two distinct colors). 

\begin{figure}[ht]
\includegraphics[height=4cm]{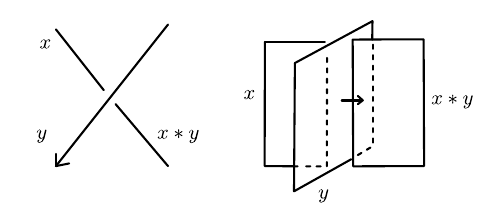}
\caption{The quandle coloring rule, where $x,y \in X$. We present the orientation of the over-sheet by its normal vector. The orientation of under-arcs or under-sheets is arbitrary.}\label{fig-quandle}
\end{figure}

\subsection{Dihedral quandles}
Let $N\geq 0$ be an integer. A {\it dihedral quandle} $R_N$ is given by the set $R_N=\mathbb{Z}/N\mathbb{Z}$ with the binary operation 
\[
x*y=2y-x,
\]
where $x,y \in R_N$. 
For a dihedral quandle $R_N$ ($N \neq 0$), 
we call an $R_N$-coloring an {\it $N$-coloring}. Let $L$ and $F$ be a classical link and a surface-link, respectively. Let $D$ be a diagram. 
We denote by $\mathrm{Col}_N(D)$ the set of $N$-colorings of $D$. 
We remark that $\mathrm{Col}_N(D)$ is a finite set. 
We denote by $\#\mathrm{Col}_N(L)$ or $\# \mathrm{Col}_N(F)$ the number of elements of $\mathrm{Col}_N(D)$, which is invariant under diagrams of $L$ or $F$.
In this paper, when we consider $N$-colorings for a diagram $D$ of a torus-covering $T^2$-link $\mathcal{S}_n(a,b)$, we take $D$ as the one given by the projection $\mathbb{R}^4 \to \mathbb{R}^3$, $(x,y,z,t) \mapsto (x,y,z)$, and we denote $D$ by the same notation $\mathcal{S}_n(a,b)$. 

\subsection{Number of Fox $p$-colorings of $\mathcal{S}_n(a,b)$}

For an $n$-braid $b$, let $A_b: (R_0)^n \to (R_0)^n$ be the map determined by $A_b=A_{b_1} \circ A_{\sigma_i^\epsilon}$ for a presentation $b=\sigma_i^\epsilon b_1$ $(i \in \{1, \ldots, n-1\}, \epsilon \in \{+1, -1\})$, where $A_{\sigma_i^\epsilon}$ is given by 
\begin{eqnarray*}
&&A_{\sigma_i}(x_1, \ldots, x_n)=(x_1, \ldots, x_{i-1}, x_{i+1}, x_i * x_{i+1}, x_{i+2}, \ldots, x_n), \\
&&A_{\sigma_i^{-1}}(x_1, \ldots, x_n)=(x_1, \ldots, x_{i-1}, x_{i+1}*x_i, x_i , x_{i+2}, \ldots, x_n). 
\end{eqnarray*}
We remark that $A_b$ is well-defined and bijective. We denote by the same notation $A_b$ the representation matrix in $M(n; \mathbb{Z})$ determined by $\mathbf{x} \mapsto A_b \, \mathbf{x}$ for a column vector $\mathbf{x}=(x_1, \ldots, x_n)^T$. The matrix $A_{b_0 b_1}$ for $n$-braids $b_0$ and $b_1$ satisfies $A_{b_0 b_1}=A_{b_1}A_{b_0}$. 
By taking composition with the projection $\mathbb{Z}^n \to (\mathbb{Z}/ N\mathbb{Z})^n$, 
the map $A_b$ induces a bijection $A_b \pmod N: (R_N)^n \to (R_N)^n$; note that for any $x,y \in \mathbb{Z}$ and $[x] \coloneqq x \pmod N \in \mathbb{Z}/N\mathbb{Z}$, $[2y-x]=2[y]-[x]$. 
For each $N$-coloring of $b$, the induced map $A_b \pmod N$ sends the $n$-tuple of the colors of the initial arcs of $b$ to that of the terminal arcs of $b$. 

When the closure of an $n$-braid $a$ has an $N$-coloring such that the $n$-tuple of the initial arcs is assigned with an $n$-tuple of colors $\mathbf{x} \in (\mathbb{Z}/N\mathbb{Z})^n$, it is a solution of a system of linear equations $A_a \,\mathbf{x}=\mathbf{x}$ in $(\mathbb{Z}/N\mathbb{Z})^n$. 
Note that if the number of $N$-colorings of a link is $N$, then it admits only trivial $N$-colorings. 
By construction of a torus-covering $T^2$-link, the following proposition is clear. We denote by $I$ the unit matrix. 
\begin{proposition}\label{newprop5-1}
Let $(a,b)$ be $n$-braids which commute. Then there is a natural bijection between 
the set of $N$-colorings of $\mathcal{S}_n(a,b)$ and $\mathrm{ker}(A_a-I \pmod{N} )\cap \mathrm{ker}(A_b-I \pmod{N})$, where $A_c-I \pmod{N}: (\mathbb{Z}/N\mathbb{Z})^n \to (\mathbb{Z}/N\mathbb{Z})^n$ for $c=a,b$. 
\end{proposition}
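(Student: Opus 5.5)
The plan is to read an $N$-coloring of the diagram $\mathcal{S}_n(a,b)$ off its restriction to the fiber disk $\mathbf{p}^{-1}(x_0)=D^2\times\{x_0\}$. This restriction is the $n$-tuple $\mathbf{x}\in(\mathbb{Z}/N\mathbb{Z})^n$ of colors of the sheets through the $n$ points of $F\cap \mathbf{p}^{-1}(x_0)$. The diagram is obtained by the projection $(x,y,z,t)\mapsto(x,y,z)$, which on each fiber $D^2\times\{x\}$ is the projection forgetting $t$. So each fiber meets the diagram in $n$ points, ordered along the fiber direction, and the double point curves of the diagram are swept out by crossings of the moving $n$-point configuration. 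Consequently, restricting the diagram to $\mathbf{p}^{-1}(\mathbf{m})$ and $\mathbf{p}^{-1}(\mathbf{l})$ gives exactly the braid diagrams of the closures $\hat a$ and $\hat b$. Moreover, the coloring rule along a double point curve, restricted to a transverse fiber arc, is the classical coloring rule at the corresponding crossing.

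The first step is to show that the map $C\mapsto \mathbf{x}$ lands in the stated intersection. Restricting $C$ to $\mathbf{p}^{-1}(\mathbf{m})$ gives an $N$-coloring of the closed braid $\hat a$. By the description of $A_a$, the colors at the terminal end equal $A_a\mathbf{x}$, and these must coincide with the initial colors $\mathbf{x}$ because the braid is closed. Hence $(A_a-I)\mathbf{x}=0$ mod $N$, and likewise $(A_b-I)\mathbf{x}=0$ from $\mathbf{p}^{-1}(\mathbf{l})$.

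For the converse and injectivity, cut $T$ along $\mathbf{m}\cup\mathbf{l}$ to get a square $Q$; $F$ restricted over $Q$ is a 2-dimensional braid whose four boundary edges carry $a$, $b$, $a$, $b$. Any point of $Q$ can be joined to $x_0$ by a path in $Q$, and transporting the colors along the path via the maps $A_{\sigma_i^{\pm1}}$ determines the colors of all sheets over $Q$ from $\mathbf{x}$; this gives injectivity. For surjectivity, I would use the chart description: over $Q$ the surface is presented by a chart whose edges correspond to $\sigma_i$ and whose vertices are white vertices (relations $\sigma_i\sigma_j=\sigma_j\sigma_i$, $|i-j|>1$) or black vertices (relations $\sigma_i\sigma_{i+1}\sigma_i=\sigma_{i+1}\sigma_i\sigma_{i+1}$) — I take the identification of black vertices with triple points (which are colored consistently) from the chart theory of \cite{N}. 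Because the matrices $A_{\sigma_i}$ satisfy the braid relations, colorings defined by transport are path-independent within $Q$: parallel transport around any small loop around a vertex is the identity, so by simple connectivity of $Q$ transport is well defined. Finally, gluing the edges of $Q$ back together requires that transport along $\mathbf{m}$ and along $\mathbf{l}$ fixes $\mathbf{x}$, which is exactly the kernel condition. Consistency at the corner is automatic since $A_{ab}=A_{ba}$.

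The main obstacle is the rigorous justification that path-independent transport corresponds to a genuine coloring of the diagram near triple points and branch-free singularities. I would handle this by invoking that colorings of a diagram are invariant under the moves relating charts and that around a black vertex the triple point coloring condition is the braid relation identity $A_{\sigma_i\sigma_{i+1}\sigma_i}=A_{\sigma_{i+1}\sigma_i\sigma_{i+1}}$. Since the paper labels this proposition ``clear by construction,'' I would present this argument briefly rather than in full detail.
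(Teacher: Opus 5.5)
Your argument is correct and is essentially the paper's: the paper only asserts that the proposition is clear by construction, and your restriction to the fiber over $x_0$, transport of colors over the square $Q$, and use of $(A_a-I)\mathbf{x}=(A_b-I)\mathbf{x}=0$ to glue along $\mathbf{m}$ and $\mathbf{l}$ is exactly the unpacking of that claim. Two minor remarks: first, in chart terminology the degree-6 vertices realizing $\sigma_i\sigma_{i+1}\sigma_i=\sigma_{i+1}\sigma_i\sigma_{i+1}$ (the triple points) are called white vertices, the degree-4 ones are crossings, and black vertices are branch points, which do not occur here; second, your ``main obstacle'' needs no appeal to invariance under chart moves, since the coloring condition is local along double point curves and the finitely many triple points do not disconnect any sheet.
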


We recall that in this paper, $p$ is an odd prime. 
\begin{proposition}\label{cor1228}
Let $a$ be an $n$-braid, and let $r$ be the rank of $A_{a}-I \pmod{p}$. Then $\#\mathrm{Col}_p(\hat{a})=p^{n-r}$. 
\end{proposition}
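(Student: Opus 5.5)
The plan is to identify the $p$-colorings of the closed braid $\hat{a}$ with the fixed points of the linear map $A_a \pmod p$. Then I count that fixed-point set with linear algebra over the field $\mathbb{F}_p=\mathbb{Z}/p\mathbb{Z}$.

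First I will set up the correspondence. Take the closure diagram of $\hat{a}$ and cut it just above the starting point set of $a$. Every arc of the diagram passes through this cut or lies inside the braid. The arcs through the cut are the $n$ initial arcs of $a$, and each is identified with a terminal arc through the trivial closing arc.

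A $p$-coloring of $\hat{a}$ is determined by the colors $\mathbf{x}=(x_1,\ldots,x_n)^T\in(\mathbb{Z}/p\mathbb{Z})^n$ of the initial arcs. Indeed, reading the crossings of $a$ one letter at a time, the coloring rule at a crossing $\sigma_i^{\pm1}$ forces the colors of the outgoing arcs through $A_{\sigma_i^{\pm1}}$. So the colors of all arcs inside the braid are determined step by step, and the terminal tuple is $A_a\mathbf{x} \pmod p$. The closure condition is that the terminal tuple equals the initial tuple.

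Conversely, every $\mathbf{x}$ with $A_a\mathbf{x}=\mathbf{x}$ propagates through the braid to a consistent coloring. Distinct $\mathbf{x}$ give distinct colorings. This yields a bijection
\[
\mathrm{Col}_p(\hat{a})\;\longleftrightarrow\;\ker(A_a-I \pmod{p}).
\]
This is the same observation already recorded in the text before Proposition \ref{newprop5-1}, specialized to a single braid, with $N=p$.

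Next I use that $p$ is prime, so $\mathbb{Z}/p\mathbb{Z}$ is a field. The map $A_a-I \pmod p$ is an $\mathbb{F}_p$-linear endomorphism of $\mathbb{F}_p^n$ of rank $r$. By rank–nullity its kernel is a subspace of dimension $n-r$, so it has exactly $p^{n-r}$ elements. Combined with the bijection, this gives $\#\mathrm{Col}_p(\hat{a})=p^{n-r}$.

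There is no real obstacle here. The only point needing care is that the kernel is a vector space, so that its cardinality is a power of $p$ determined by the rank. This genuinely uses primality; for composite $N$ one would need Smith normal form instead. I would also note, as the paper does, that $\#\mathrm{Col}_p$ is a link invariant, so the count does not depend on the chosen closed braid diagram.
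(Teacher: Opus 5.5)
Your proposal is correct and is essentially the paper's argument. The paper gives no separate proof: it relies on the preceding observation that $p$-colorings of $\hat{a}$ correspond to solutions of $A_a\mathbf{x}=\mathbf{x}$ in $(\mathbb{Z}/p\mathbb{Z})^n$, after which $\#\ker(A_a-I \bmod p)=p^{n-r}$ follows by rank--nullity over the field $\mathbb{F}_p$, exactly as you do. (One cosmetic point: a single arc of $\hat{a}$ can pass through several of the $n$ cut points, e.g.\ in the closure of $\sigma_1\in B_2$, so the bijection is better phrased via the colors at the $n$ cut points than via ``the $n$ initial arcs''; this is in fact how your argument uses it.)
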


In \cite{N5}, we showed the following

\begin{proposition}[{\cite[Lemma 6.3]{N5}}]\label{claim-1}
For a full twist $\tilde{\Delta}$ of $n$ strands, $A_{\tilde{\Delta}^2} \equiv I \pmod{p}$  
if $n$ is odd, and $A_{\tilde{\Delta}^p} \equiv I \pmod{p}$ if $n$ is even. 
\end{proposition}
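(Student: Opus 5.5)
The idea is to express $A_{\tilde{\Delta}}$ through the action of the dihedral group of $\mathbb{Z}/p\mathbb{Z}$ on colors, rather than through matrix entries. For $y \in R_p$ let $r_y \colon R_p \to R_p$ be the reflection $r_y(x)=x*y=2y-x$. Every map $A_b \pmod p$ is built from such reflections. I will show that the full twist sends the color tuple $\mathbf{x}=(x_1,\dots,x_n)$ to $(g(x_1),\dots,g(x_n))$, where $g=g_{\mathbf{x}}$ is a composition of exactly $n$ reflections determined by $\mathbf{x}$. I will also show that $g$ is the same for the whole tuple.

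\emph{Step 1 (the full twist acts by a global conjugation).} I would use the standard fact that the full twist $\tilde{\Delta}$ generates the center of the braid group. Under the Artin action on the free group $\langle t_1,\dots,t_n\rangle$ it acts by $t_i \mapsto w^{-1} t_i w$ with $w=t_1t_2\cdots t_n$, possibly up to the paper's convention for inverses and order. The map $A_b$ is obtained by passing from the Artin action to the quandle: the over-arc acts on the under-arc via $x \mapsto x*y$, which is the quandle version of conjugation. Transporting this, the color of the $i$-th terminal arc of $\tilde{\Delta}$ should be
\[
x_i*x_1*x_2*\cdots*x_n=(r_{x_n}\circ\cdots\circ r_{x_1})(x_i),
\]
with left-associated operations and the order of the $x_j$ possibly reversed. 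This identity is the main obstacle. I would check it directly by induction, computing first $A_{\sigma_1\sigma_2\cdots\sigma_{n-1}}(\mathbf{x})=(x_2,\dots,x_n,\,x_1*x_2*\cdots*x_n)$ from the definition of $A_{\sigma_i}$. I would then iterate $n$ times, using right self-distributivity to rewrite $(u*v)*w$-type expressions. The result should be that each coordinate gets acted on by the same composite $g=r_{x_n}\circ\cdots\circ r_{x_1}$ (or its analogue), where the $x_j$ are the original colors. A small case such as $n=2$ or $n=3$ would confirm the convention.

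\emph{Step 2 (the parity of $n$ decides).} An element of the dihedral group $\{x\mapsto \pm x+c\}$ that is a composition of $n$ reflections is itself a reflection when $n$ is odd, and a translation when $n$ is even.
\begin{itemize}
\item For $n$ odd, $g(x)=2c-x$ for some $c\in\mathbb{Z}/p\mathbb{Z}$, so $g^2=\mathrm{id}$.
\item For $n$ even, $g(x)=x+2s$, where $s=\sum_j(-1)^{j}x_j$ up to sign, so $g^p=\mathrm{id}$ in $\mathbb{Z}/p\mathbb{Z}$.
\end{itemize}

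\emph{Step 3 (iterating the full twist).} It remains to check that iterating $\tilde{\Delta}$ iterates the same $g$. Applying $\tilde{\Delta}$ once gives the tuple $g(\mathbf{x})$. The map computed from this new tuple is
\[
r_{g(x_n)}\circ\cdots\circ r_{g(x_1)}=g\circ\bigl(r_{x_n}\circ\cdots\circ r_{x_1}\bigr)\circ g^{-1}=g\,g\,g^{-1}=g,
\]
since $r_{g(y)}=g\,r_y\,g^{-1}$ for any affine isometry $g$. So $\tilde{\Delta}^k$ acts by $g^k$ coordinatewise. Hence $A_{\tilde{\Delta}^2}\mathbf{x}=\mathbf{x}$ for $n$ odd and $A_{\tilde{\Delta}^p}\mathbf{x}=\mathbf{x}$ for $n$ even, for every $\mathbf{x}\in(\mathbb{Z}/p\mathbb{Z})^n$. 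Since $A_{\tilde{\Delta}^k} \pmod p$ is linear, this gives the matrix congruences $A_{\tilde{\Delta}^2}\equiv I$ and $A_{\tilde{\Delta}^p}\equiv I \pmod p$ respectively.
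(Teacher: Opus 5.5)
Your argument is correct, and Steps 2 and 3 are complete as written. The formula you conjecture in Step 1 is also exactly right with the paper's conventions, with no reversal of order needed. The $i$-th terminal color of $\tilde{\Delta}$ is $x_i*x_1*x_2*\cdots*x_n$ (left-associated). For $n=3$ this equals $-x_i+2(x_1-x_2+x_3)$, which reproduces the matrix $A_{\sigma_1\sigma_2}^3$ computed in the proof of Proposition~\ref{prop5-2-28}.

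Step 1 is, however, still only a plan, and the appeal to the Artin action is motivation rather than proof. You can close it with the conjugation identity you already use in Step 3. Put $R_{\mathbf{x}}=r_{x_n}\circ\cdots\circ r_{x_1}$. Since $r_{x*y}=r_y\circ r_x\circ r_y$, you have $r_{x_i*x_{i+1}}\circ r_{x_{i+1}}=r_{x_{i+1}}\circ r_{x_i}=r_{x_i}\circ r_{x_{i+1}*x_i}$. So $R_{\mathbf{x}}$ is unchanged by every $A_{\sigma_i^{\pm1}}$, hence by every $A_b$. Since $r_{x_1}(x_1)=x_1$, your formula for $A_{\sigma_1\cdots\sigma_{n-1}}(\mathbf{x})$ reads $(x_2,\dots,x_n,R_{\mathbf{x}}(x_1))$. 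Induction on $k$ then gives
\[
A_{(\sigma_1\cdots\sigma_{n-1})^k}(\mathbf{x})=\bigl(x_{k+1},\dots,x_n,R_{\mathbf{x}}(x_1),\dots,R_{\mathbf{x}}(x_k)\bigr).
\]
Taking $k=n$ is Step 1 with $g=R_{\mathbf{x}}$, and the braid-invariance of $R_{\mathbf{x}}$ makes Step 3 immediate.

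As for the route: the paper does not prove this statement at all. It quotes it from \cite[Lemma 6.3]{N5}, and the only check inside the paper is the brute-force computation of the powers of $A_{\sigma_1\sigma_2}$ for $n=3$ in Proposition~\ref{prop5-2-28}. Your dihedral-group argument is uniform in $n$, does not use that $p$ is prime, and yields more precise information.

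For odd $n$ it gives $A_{\tilde{\Delta}}\mathbf{x}=-\mathbf{x}+2c(\mathbf{x})(1,\dots,1)^T$ with $c(\mathbf{x})=x_1-x_2+\cdots+x_n$. So $A_{\tilde{\Delta}^2}=I$ already over $\mathbb{Z}$, in line with $A_{\sigma_1\sigma_2}^6=I$ in the paper.

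For even $n$ it gives $A_{\tilde{\Delta}^k}\mathbf{x}=\mathbf{x}+2k\,s(\mathbf{x})(1,\dots,1)^T$ with $s(\mathbf{x})=\sum_j(-1)^jx_j$. So $A_{\tilde{\Delta}^k}\equiv I \pmod{N}$ exactly when $N\mid 2k$. In particular, $p$ is the smallest exponent that works modulo an odd prime $p$.
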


So we have the following proposition.

\begin{proposition}\label{prop1222}
Let $(a,b)$ be $n$-braids which commute. Let $F=\mathcal{S}_n(a,b)$. 
If $A_b \equiv I \pmod{p}$, then $\#\mathrm{Col}_p(F)=\#\mathrm{Col}_p(\hat{a})$. 
In particular, for an arbitrary $n$-braid $a$ and an odd (respectively, even) integer $n$, and any integer $m$, if $F=\mathcal{S}_n(a,\tilde{\Delta}^{2m})$ (respectively, $\mathcal{S}_n(a,\tilde{\Delta}^{pm})$), then $\#\mathrm{Col}_p(F)=\#\mathrm{Col}_p(\hat{a})$. 
\end{proposition}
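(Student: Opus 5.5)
The plan is to combine Proposition \ref{newprop5-1} with Proposition \ref{cor1228}, and then to obtain the particular case from Proposition \ref{claim-1}.

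First, suppose $A_b \equiv I \pmod{p}$. Then $A_b - I \pmod{p}$ is the zero map on $(\mathbb{Z}/p\mathbb{Z})^n$, so its kernel is the whole space. Proposition \ref{newprop5-1} then gives a bijection
\[
\mathrm{Col}_p(F) \;\longleftrightarrow\; \ker(A_a - I \bmod p)\cap(\mathbb{Z}/p\mathbb{Z})^n=\ker(A_a - I \bmod p).
\]
The same reasoning applies to the closed braid $\hat a$: its $p$-colorings are exactly the solutions of $A_a\mathbf{x}=\mathbf{x}$ in $(\mathbb{Z}/p\mathbb{Z})^n$, as explained before Proposition \ref{newprop5-1}. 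Hence both sets have cardinality $p^{n-r}$, where $r$ is the rank of $A_a-I \pmod p$ (Proposition \ref{cor1228}). This proves $\#\mathrm{Col}_p(F)=\#\mathrm{Col}_p(\hat a)$.

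For the particular case, note first that $\tilde{\Delta}$ is central in the braid group. So $(a,\tilde{\Delta}^{k})$ commutes for every $a$ and every $k$, and $\mathcal{S}_n(a,\tilde{\Delta}^k)$ is defined. By the multiplicativity rule $A_{b_0b_1}=A_{b_1}A_{b_0}$, we have $A_{\tilde{\Delta}^{2m}}=(A_{\tilde{\Delta}^2})^m$. When $n$ is odd, Proposition \ref{claim-1} gives $A_{\tilde{\Delta}^2}\equiv I \pmod p$, so $A_{\tilde{\Delta}^{2m}}\equiv I$. Similarly, when $n$ is even, $A_{\tilde{\Delta}^{pm}}=(A_{\tilde{\Delta}^p})^m\equiv I$.

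For negative $m$, we use that $A_{c^{-1}}=A_c^{-1}$, which again follows from multiplicativity together with $A_e=I$. The congruence to $I$ is preserved under inversion. The first part then applies and yields the claim.

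There is no real obstacle here. The only point needing care is to justify that the $p$-colorings of $\hat a$ correspond bijectively to fixed vectors of $A_a \bmod p$. This is the case $b=e$ of Proposition \ref{newprop5-1} restricted to a meridian slice, and is already implicit in Proposition \ref{cor1228}.
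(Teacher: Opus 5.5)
Your proof is correct and follows the same route as the paper, which obtains the proposition directly from Proposition \ref{newprop5-1}: $A_b\equiv I$ makes $\ker(A_b-I \bmod p)$ the whole space, so the $p$-colorings of $F$ correspond to the fixed vectors of $A_a \bmod p$, i.e.\ to the $p$-colorings of $\hat a$. The particular case then follows from Proposition \ref{claim-1}. Your remarks on taking powers, and on negative $m$ via $A_{c^{-1}}=A_c^{-1}$, supply details the paper leaves unstated.
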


\subsection{The degree $3$ case}\label{sec5-4}
In this subsection, we consider 3-braids. 
Applying Proposition \ref{prop1222} to torus-covering $T^2$-links of degree 3, we have the following
\begin{corollary}\label{cor1222a}
Let $a$ be a 3-braid, and let $r$ be the rank of $A_{a}-I \pmod{p}$. 
Let $m$ be any integer. Then $\#\mathrm{Col}_p(\mathcal{S}_3(a,\tilde{\Delta}^{2m}))=p^{3-r}$. 
\end{corollary}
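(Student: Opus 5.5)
This is a direct specialization of Proposition \ref{prop1222} combined with Proposition \ref{cor1228}, so the plan is short.

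First I fix $n=3$, which is odd. Proposition \ref{claim-1} then gives $A_{\tilde{\Delta}^2}\equiv I \pmod{p}$. Because $A_{b_0b_1}=A_{b_1}A_{b_0}$, it follows that
\[
A_{\tilde{\Delta}^{2m}}=(A_{\tilde{\Delta}^2})^m\equiv I \pmod p
\]
for every integer $m$. For $m<0$ I use that $A_{\tilde{\Delta}^{-2}}=(A_{\tilde{\Delta}^2})^{-1}$, which is again $\equiv I$ modulo $p$.

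Next I check that $a$ and $\tilde{\Delta}^{2m}$ commute. This holds because the full twist is central in the braid group, so $\mathcal{S}_3(a,\tilde{\Delta}^{2m})$ is defined for an arbitrary $3$-braid $a$.

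With these two facts, Proposition \ref{prop1222}, applied with $b=\tilde{\Delta}^{2m}$ (equivalently its "in particular" clause for odd $n$), gives
\[
\#\mathrm{Col}_p(\mathcal{S}_3(a,\tilde{\Delta}^{2m}))=\#\mathrm{Col}_p(\hat{a}).
\]
Concretely, by Proposition \ref{newprop5-1}, the colorings correspond to $\ker(A_a-I)\cap\ker(A_b-I)$ modulo $p$. Since $A_b-I\equiv 0$, the second kernel is all of $(\mathbb{Z}/p\mathbb{Z})^3$, so the intersection equals $\ker(A_a-I \pmod p)$.

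Finally, Proposition \ref{cor1228} with $n=3$ gives $\#\mathrm{Col}_p(\hat{a})=p^{3-r}$, where $r=\mathrm{rank}(A_a-I \pmod p)$. This is just rank–nullity over the field $\mathbb{Z}/p\mathbb{Z}$, with $p$ prime.

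There is no real obstacle. The only point needing any care is the negative-$m$ case and the centrality of $\tilde{\Delta}$, and both are standard.
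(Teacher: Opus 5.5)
Your proposal is correct and follows the paper's own argument: the paper likewise uses $A_{\tilde{\Delta}^2}\equiv I \pmod{p}$ (Proposition \ref{claim-1}) to apply Proposition \ref{prop1222}, and then concludes with Proposition \ref{cor1228}. Your remarks on negative $m$ and on the centrality of $\tilde{\Delta}$ only make explicit details that the paper leaves implicit.
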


\begin{proof}
Since $A_{\tilde{\Delta}^2} \equiv I \pmod{p}$, 
Proposition \ref{prop1222} and Proposition \ref{cor1228} imply the required result. 
\end{proof}

We determine when a 3-braid $a$ satisfies $A_a \equiv I \pmod{p}$ for some cases. Proposition \ref{prop5-2-28} is an extended result of Proposition \ref{claim-1} for the degree 3 case. 
Let $n$ be an integer. 

\begin{proposition}\label{prop5-2-28}
We consider a 3-braid $(\sigma_1\sigma_2)^n$. Then $A_{(\sigma_1\sigma_2)^n} \equiv I \pmod{p}$ if and only if $n \equiv 0 \pmod{6}$. 
If $A^n \not\equiv I \pmod{p}$, then the rank of $A_{(\sigma_1 \sigma_2)^n}-I \pmod{p}$ is one if $p=3$ and $n \equiv 2 \pmod{6}$, or $p=3$ and $n \equiv 4 \pmod{6}$, and the rank of $A_{(\sigma_1 \sigma_2)^n}-I \pmod{p}$ is two otherwise. 
In terms of the number of $p$-colorings, 
\[
\# \mathrm{Col}_p(((\sigma_1\sigma_2)^n)^\wedge)=\begin{cases} p^3 & \text{if $n \equiv 0 \pmod{6}$} \\
p^2 & \text{if $p=3$, and $n \equiv 2$ or $n \equiv 4  \pmod{6}$} \\
p & \text{otherwise}. 
\end{cases}
\]
\end{proposition}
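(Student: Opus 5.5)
The plan is to compute the matrix $A_{\sigma_1\sigma_2}$ explicitly, find its characteristic polynomial, and then analyse its powers modulo $p$. The number of colorings will then follow from Proposition \ref{cor1228}.

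\emph{The matrix.} By the definition of $A_{\sigma_i^{\pm1}}$ and the rule $A_{b_0b_1}=A_{b_1}A_{b_0}$, applying $\sigma_1$ and then $\sigma_2$ to $(x_1,x_2,x_3)$ gives
\[
(x_1,x_2,x_3)\mapsto(x_2,\,2x_2-x_1,\,x_3)\mapsto(x_2,\,x_3,\,x_1-2x_2+2x_3).
\]
Hence $B:=A_{\sigma_1\sigma_2}=\begin{pmatrix}0&1&0\\0&0&1\\1&-2&2\end{pmatrix}$. This is a companion matrix, and its characteristic polynomial is
\[
\lambda^3-2\lambda^2+2\lambda-1=(\lambda-1)(\lambda^2-\lambda+1).
\]
Because $B$ is a companion matrix, this is also its minimal polynomial over any field. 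In particular $(\lambda-1)(\lambda^2-\lambda+1)$ divides $\lambda^6-1$, so $B^6=I$ already over $\mathbb{Z}$. Thus $A_{(\sigma_1\sigma_2)^n}=B^n$ depends only on $n \bmod 6$.

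\emph{The case $p\neq 3$.} Here $\lambda-1$ and $\lambda^2-\lambda+1$ are coprime modulo $p$, since $\lambda^2-\lambda+1$ takes the value $1$ at $\lambda=1$. The discriminant of $\lambda^2-\lambda+1$ is $-3\not\equiv 0$, so its roots $\omega,\bar\omega$ in $\overline{\mathbb{F}}_p$ are distinct, and they are primitive $6$th roots of unity because $p\nmid 6$. Therefore $B$ is diagonalizable over $\overline{\mathbb{F}}_p$ with eigenvalues $1,\omega,\bar\omega$. It follows that $B^n-I$ is diagonalizable with eigenvalues $0,\omega^n-1,\bar\omega^n-1$. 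Its rank is $0$ when $6\mid n$ and $2$ otherwise. Rank does not change under extension of scalars, so these are also the ranks over $\mathbb{F}_p$.

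\emph{The case $p=3$.} Here $\lambda^2-\lambda+1\equiv(\lambda+1)^2$, so the minimal polynomial is $(\lambda-1)(\lambda+1)^2$. Hence $B\sim(1)\oplus J$ with $J=\begin{pmatrix}-1&1\\0&-1\end{pmatrix}$, and $B^n-I\sim(0)\oplus(J^n-I)$ with
\[
J^n=(-1)^n\begin{pmatrix}1&-n\\0&1\end{pmatrix}.
\]
There are three subcases.
\begin{itemize}
\item If $n$ is odd, then $J^n-I$ is upper triangular with diagonal entries $-2\not\equiv0$, so the rank is $2$.
\item If $n$ is even and $3\nmid n$, then $J^n-I$ has a single nonzero entry $\mp n$, so the rank is $1$. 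This is exactly the case $n\equiv 2,4 \pmod 6$.
\item If $6\mid n$, then $J^n-I=0$.
\end{itemize}

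Combining the two cases gives all the rank statements, and in particular $B^n\equiv I$ if and only if $6\mid n$. Proposition \ref{cor1228} then gives $\#\mathrm{Col}_p=p^{3-r}$, which yields the stated formula. The only delicate step is the $p=3$ degeneration. There one must use that $B$ is a companion matrix, so that it is non-derogatory and carries a genuine size-$2$ Jordan block rather than a scalar block $-I$. The conjugation-invariance of rank handles the rest.
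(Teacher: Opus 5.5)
Your proof is correct, but it takes a genuinely different route from the paper's.

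\emph{The paper's route.} It works by direct computation:
\begin{itemize}
\item it computes $A^k$ for $k=1,\dots,6$ and observes $A^6=I$;
\item it row-reduces each $A^k-I$ over $\mathbb{Z}$ to an echelon form with pivots $1$, $2$ or $3$;
\item it reads off the rank modulo an odd prime $p$, which is $1$ exactly when $p=3$ and $k=2,4$.
\end{itemize}
This is fully elementary and checkable by hand. However, it explains neither the period $6$ nor the special role of $p=3$. It is also prone to slips: the displayed intermediate matrix $A^3-I$ has $(2,2)$-entry $-3$ where it should be $-4$, though the stated echelon form is right.

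\emph{Your route.} You compute $A$ once and let the characteristic polynomial $(\lambda-1)(\lambda^2-\lambda+1)$ do the work:
\begin{itemize}
\item Cayley--Hamilton gives the period $6$;
\item the discriminant $-3$ shows that $p=3$ is the only prime where the two nontrivial eigenvalues collide (at $-1$);
\item the fact that a companion matrix is non-derogatory guarantees a genuine Jordan block there, so the rank drops to $1$ rather than $0$ for $n\equiv 2,4 \pmod 6$.
\end{itemize}
This is essentially the eigenvalue/arithmetic approach that the paper only alludes to in Remark \ref{rmk:generalization}(2). The nontrivial eigenvalues are primitive sixth roots of unity $\omega$, and $3$ is the prime ramified in $\mathbb{Z}[\omega]$. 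Your argument transfers directly to other $3$-braids, at the modest cost of using Jordan forms and extension of scalars.
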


\begin{proof}
Put $A \coloneqq A_{\sigma_1 \sigma_2}$. 
Since 
\[
A_{\sigma_1}=\begin{pmatrix} 0 & 1 & 0\\
-1 & 2 & 0\\
0 & 0 & 1
\end{pmatrix}, \  A_{\sigma_2}=\begin{pmatrix} 1 & 0 & 0 \\
0 & 0 & 1 \\
0 & -1 & 2 \\
\end{pmatrix}, 
\] 
we calculate 
\begin{eqnarray*}
&&
A=A_{\sigma_1 \sigma_2}=A_{\sigma_2}A_{\sigma_1}=
\begin{pmatrix}
0 & 1 & 0\\
0 & 0 & 1\\
1 & -2& 2
\end{pmatrix}, \end{eqnarray*}
and \begin{eqnarray*}A-I=\begin{pmatrix}
-1 & 1 & 0\\
0 & -1 & 1\\
1 & -2& 1
\end{pmatrix} \to \begin{pmatrix}
1 & -1 & 0\\
0 & 1 & -1\\
0 & 0& 0
\end{pmatrix},   
\end{eqnarray*}
where $\to$ denotes row transformations; hence $\mathrm{rank} (A-I)=2$. 
We calculate 
\begin{eqnarray*}
&
A^2=
\begin{pmatrix}
0 & 0 & 1\\
1 & -2 & 2\\
2 & -3& 2
\end{pmatrix},  & A^2-I=\begin{pmatrix}
-1 & 0 & 1\\
1 & -3 & 2\\
2 & -3& 1
\end{pmatrix} \to \begin{pmatrix}
1 & 0 & -1\\
0 & 3 & -3\\
0 & 0& 0
\end{pmatrix},  \\
\\
  &
A^3=
\begin{pmatrix}
1 & -2 & 2\\
2 & -3 & 2\\
2 & -2& 1
\end{pmatrix},  & A^3-I=\begin{pmatrix}
0 & -2 & 2\\
2 & -3 & 2\\
2 & -2& 0
\end{pmatrix} \to \begin{pmatrix}
2 & -2 & 0\\
0 & 2 & -2\\
0 & 0& 0
\end{pmatrix},  \\
\\
 &
A^4=
\begin{pmatrix}
2 & -3 & 2\\
2 & -2 & 1\\
1 & 0& 0
\end{pmatrix},  & A^4-I=\begin{pmatrix}
1 & -3 & 2\\
2 & -3 & 1\\
1 & 0& -1
\end{pmatrix} \to \begin{pmatrix}
1 & 0 & -1\\
0 & 3 & -3\\
0 & 0& 0
\end{pmatrix},  \\
\\
 &
A^5=
\begin{pmatrix}
2 & -2 & 1\\
1 & 0 & 0\\
0 & 1& 0
\end{pmatrix},  & A^5-I=\begin{pmatrix}
1 & -2 & 1\\
1 & -1 & 0\\
0 & 1& -1
\end{pmatrix} \to \begin{pmatrix}
1 & -1 & 0\\
0 & 1 & -1\\
0 & 0& 0
\end{pmatrix}, \\
\\
& A^6=\begin{pmatrix}
1 & 0 & 0\\
0 & 1 & 0\\
0 & 0& 1
\end{pmatrix}=I. & 
\end{eqnarray*}
Hence we have the required result. 
\end{proof}

Next we consider $(\sigma_1\sigma_2^{-1})^n$. In this case we consider
\begin{align*}
\mathbb{Z}\Big[\frac{1+\sqrt{5}}{2}\Big] = \Bigg\{x+ \frac{1+\sqrt{5}}{2} y \,\Bigg|\, x,y \in \mathbb{Z}\Bigg\}, 
\end{align*}
the ring generated by $\frac{1+\sqrt{5}}{2}$ over $\mathbb{Z}$. 
For $\alpha_1, \alpha_2\in \mathbb{Z}[\frac{1+\sqrt{5}}{2}]$, we write $\alpha_1\equiv \alpha_2 \pmod{p}$ if $\alpha_1-\alpha_2 \in p\mathbb{Z}[\frac{1+\sqrt{5}}{2}]$. In other words, for $x,x', y,y' \in \mathbb{Z}$, we have $x+\frac{1+\sqrt{5}}{2}y  \equiv x'+\frac{1+\sqrt{5}}{2}y' \pmod{p}$ if and only if $x \equiv x' \pmod{p}$ and $y \equiv y' \pmod{p}$.

\begin{proposition}\label{prop5-2}
We consider a 3-braid $(\sigma_1\sigma_2^{-1})^n$. Then $A_{(\sigma_1\sigma_2^{-1})^n} \equiv I \pmod{p}$ if and only if $(\frac{3+\sqrt{5}}{2})^n \equiv 1 \pmod{p}$ in $\mathbb{Z}[\frac{1+\sqrt{5}}{2}]$. 
In terms of the number of $p$-colorings, 
\[
\# \mathrm{Col}_p(((\sigma_1\sigma_2^{-1})^n)^\wedge)=p^3 
\]
if and only if $(\frac{3+\sqrt{5}}{2})^n \equiv 1 \pmod{p}$  in $\mathbb{Z}[\frac{1+\sqrt{5}}{2}]$. 
In particular, 
\begin{align*}
A_{(\sigma_1\sigma_2^{-1})^n} & \equiv I \pmod{3} \text{ if and only if $n\equiv 0 \pmod{4}$}.  
\end{align*}
Moreover,  
if $n \not\equiv 0 \pmod{4}$, then the rank of $A_{(\sigma_1 \sigma_2^{-1})^n}-I \pmod{3}$ is two. In terms of the number of $p$-colorings, 
\[
\# \mathrm{Col}_3(((\sigma_1\sigma_2^{-1})^n)^\wedge)=\begin{cases} 27 & \text{if 
$n \equiv 0 \pmod{4}$} \\
3 & \text{otherwise}. 
\end{cases}
\]
\end{proposition}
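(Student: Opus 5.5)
Write $B \coloneqq A_{\sigma_1\sigma_2^{-1}} = A_{\sigma_2^{-1}}A_{\sigma_1}$ and compute it explicitly from the matrices of Proposition \ref{prop5-2-28}. Here
\[
A_{\sigma_2^{-1}}=\begin{pmatrix}1&0&0\\0&2&-1\\0&1&0\end{pmatrix},
\]
which gives
\[
B=\begin{pmatrix}0&1&0\\-2&4&-1\\-1&2&0\end{pmatrix}.
\]
Every $A_b$ fixes $(1,1,1)^T$, since a constant coloring is preserved. So $B$ has eigenvalue $1$, and on the quotient $\mathbb{Z}^3/\langle(1,1,1)^T\rangle$, or equivalently on the invariant sublattice of vectors with coordinate-difference structure, it acts by a $2\times 2$ integer matrix. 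I will find that block by change of basis $e_1'=(1,1,1)^T$, $e_2',e_3'$. Since $\det B=1$ and $\operatorname{tr}B=4$, the remaining block $C$ has trace $3$ and determinant $1$. Its characteristic polynomial is $t^2-3t+1$, with roots $\frac{3\pm\sqrt5}{2}=\phi^{\pm2}$, where $\phi=\frac{1+\sqrt5}{2}$.

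I will conjugate $B$ over $\mathbb{Z}$, which preserves everything mod $p$, to a block upper-triangular form $\begin{pmatrix}1&*\\0&C\end{pmatrix}$. Then I identify $C$ with multiplication by $\phi^2$ on $\mathbb{Z}[\phi]$ in the basis $\{1,\phi\}$. This works because $\mathbb{Z}[\phi]\cong\mathbb{Z}[t]/(t^2-t-1)$, and any integer matrix with characteristic polynomial $t^2-3t+1$ and a cyclic vector is conjugate over $\mathbb{Z}$ to the companion matrix of multiplication by $\phi^2$. The class number is one, but it is easier to exhibit the explicit conjugation. Then $B^n\equiv I \pmod p$ requires $C^n\equiv I$, that is, $\phi^{2n}\equiv1$ in $\mathbb{Z}[\phi]/p$. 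Conversely, I must check that the off-diagonal part $*$ vanishes. The top-right entry of $B^n$ is $*(I+C+\dots+C^{n-1})$, and $(C-I)\cdot\sum C^k = C^n-I\equiv 0$. Since $C-I$ has determinant $1-3+1=-1$, it is invertible mod every $p$, so $\sum C^k\equiv0$ and $B^n\equiv I$. Since conjugation preserves rank, the first claim and the $p^3$ statement follow from Proposition \ref{cor1228}.

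For $p=3$, $t^2-t-1$ is irreducible mod $3$, so $\mathbb{Z}[\phi]/3\cong\mathbb{F}_9$. I compute the order of $\phi^2$ in $\mathbb{F}_9^\times$ directly: $\phi^2=\phi+1$, $\phi^4=3\phi+2\equiv 2=-1$, so $\phi^2$ has order $4$. Hence $B^n\equiv I$ iff $4\mid n$.

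For the rank claim, if $4\nmid n$ then $\phi^{2n}\neq1$ in the field $\mathbb{F}_9$, so $C^n-I$ is multiplication by a nonzero field element and is invertible, of rank $2$. The first column of $B^n-I$ is zero in the block form, so the rank is exactly $2$ and the number of colorings is $3^{3-2}=3$. The main obstacle is producing the integral block-triangular conjugation cleanly. To get it, I will choose the basis $(1,1,1)^T,(0,1,1)^T,(0,0,1)^T$ and verify by direct computation.
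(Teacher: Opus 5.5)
Your proof is correct and follows essentially the same route as the paper. Both make an integral change of coordinates that reduces $A_{\sigma_1\sigma_2^{-1}}$ to the identity on a rank-one piece plus multiplication by $\varepsilon^2=\frac{3+\sqrt5}{2}$ on $\mathbb{Z}[\varepsilon]$, and then use $\mathbb{Z}[\varepsilon]/3\cong\mathbb{F}_9$ for $p=3$.

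The only difference is in how the off-diagonal part is handled. The paper's unimodular map $f(x,y,z)=(x+y+z,\,x+\varepsilon^2y+\varepsilon z)$, acting on row vectors, splits the action directly as $\mathrm{id}\oplus\varepsilon^{2n}$ for every $n\in\mathbb{Z}$. You instead reach a block-triangular form and need $\det(C-I)=-1$ to show the off-diagonal block of $B^n$ vanishes mod $p$. Your formula $v(I+C+\dots+C^{n-1})$ for that block is only for $n\ge 1$; for negative $n$, pass to $B^{-n}$.
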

\begin{proof}
Put $A \coloneqq A_{\sigma_1 \sigma_2^{-1}}$. 
Recall that 
\[
A_{\sigma_1}=\begin{pmatrix} 0 & 1 & 0\\
-1 & 2 & 0\\
0 & 0 & 1
\end{pmatrix}, \ A_{\sigma_2^{-1}}=\begin{pmatrix} 1 & 0 & 0 \\
0 & 2 & -1 \\
0 & 1 & 0 \\
\end{pmatrix}. 
\]
Hence we have
\[
A=A_{\sigma_1\sigma_2^{-1}}=A_{\sigma_2^{-1}}A_{\sigma_1}=\begin{pmatrix}
0 & 1 & 0\\
-2 & 4 & -1\\
-1 & 2 & 0
\end{pmatrix}. 
\]
We see that the eigenvalues of $A$ are $1, \frac{3 \pm\sqrt{5}}{2}$, and that $(1,\, 1,\, 1)^T$, $( 1, \, \frac{3\pm\sqrt{5}}{2}, \, \frac{1\pm\sqrt{5}}{2} )^T$ are the eigenvectors of $A$ with eigenvalues $1, \frac{3 \pm\sqrt{5}}{2}$, respectively. 
Put $\varepsilon=\frac{1+  \sqrt{5}}{2}$. Note that $\frac{3+\sqrt{5}}{2}=\varepsilon^2$. 

Define a homomorphism 
\begin{align*}
  f\colon \mathbb{Z}^3 \rightarrow \mathbb{Z}\oplus \mathbb{Z}[\varepsilon]
\end{align*}
of $\mathbb{Z}$-modules 
by $f(x,y,z)=(x+y+z, x+\varepsilon^2 y+\varepsilon z)$. Then we easily see that this is an isomorphism. Indeed, the inverse map is given by $f^{-1}(x,y+z\varepsilon) = (x-z,-x+y+z,x-y)$. 

Let $n\in \mathbb{Z}$ be any integer. Then, since 
\[
A^n  \begin{pmatrix} 1 \\ 1 \\ 1 \end{pmatrix}=\begin{pmatrix} 1 \\ 1 \\ 1 \end{pmatrix}, 
A^n  \begin{pmatrix} 1  \\ \varepsilon^2  \\ \varepsilon  \end{pmatrix}=\varepsilon^{2n} \begin{pmatrix} 1 \\ \varepsilon^2 \\ \varepsilon \end{pmatrix},
\]
we have the following commutative diagram: 
\[
\begin{tikzcd}
\mathbb{Z}^3 \arrow[r,"\times A^n"] \arrow[d, "\cong", "f"'] & \mathbb{Z}^3 \arrow[d,"f"', "\cong"]\\
\mathbb{Z}\oplus \mathbb{Z}[\varepsilon] \arrow[r, "\mathrm{id}\oplus \varepsilon^{2n}"]& \mathbb{Z}\oplus \mathbb{Z}[\varepsilon]. 
\end{tikzcd}
\]
Here, 
$\times A^n$ maps $(x,y,z) \in \mathbb{Z}^3$ to $(x,y,z)A^n$ and $\mathrm{id}\oplus \varepsilon^{2n}$ maps $(x,\alpha) \in \mathbb{Z}\oplus \mathbb{Z}[\varepsilon]$ to $(x,\varepsilon^{2n}\alpha)$. 
Then by taking modulo $p$, we have the following commutative diagram: 
\[
\begin{tikzcd}[column sep=huge]
(\mathbb{Z}/p\mathbb{Z})^3 \arrow[r,"\times A^n \bmod{p}"] \arrow[d, "\cong", "f \bmod{p}"'] & (\mathbb{Z}/p\mathbb{Z})^3 \arrow[d,"f \bmod{p}"', "\cong"]\\
\mathbb{Z}/p\mathbb{Z}\oplus \mathbb{Z}[\varepsilon]/p\mathbb{Z}[\varepsilon] \arrow[r, "\mathrm{id}\oplus \varepsilon^{2n} \bmod{p}"]& \mathbb{Z}/p\mathbb{Z}\oplus \mathbb{Z}[\varepsilon]/p\mathbb{Z}[\varepsilon]. 
\end{tikzcd}
\]
 Hence we find $A^n \equiv I \pmod{p}$ if and only if $\varepsilon^{2n} \equiv 1 \pmod{p}$ (in $\mathbb{Z}[\varepsilon]$). The interpretation in terms $\#\mathrm{Col}_p$ follows from Proposition \ref{cor1228}. 
 
In the case $p=3$, a direct computation shows that $\varepsilon^{2n} \equiv 1 \pmod{3}$ if and only if $n \in 4 \mathbb{Z}$. 
Furthermore, we compute 
\[
\mathbb{Z}[\varepsilon]/3\mathbb{Z}[\varepsilon]=\mathbb{Z}[X]/(X^2-X-1) \otimes \mathbb{Z}/3\mathbb{Z}=(\mathbb{Z}/3\mathbb{Z})[X]/(X^2-X-1).
\] 
Then since $5$ is a quadratic non-residue mod $3$, $X^2-X-1$ is an irreducible polynomial in $(\mathbb{Z}/3\mathbb{Z})[X]$, and hence $\mathbb{Z}[\varepsilon]/3\mathbb{Z}[\varepsilon]$ turns out to be the finite field $\mathbb{F}_9$ of order $9$. 
 Therefore, we have
 \[
 \mathrm{ker}(\varepsilon^{2n}-1 \bmod{3}: \mathbb{F}_9 \to \mathbb{F}_9)=
 \begin{cases}
 \mathbb{F}_9 & \text{if $\varepsilon^{2n}-1 \equiv 0 \pmod{3}$} \\
 0 & \text{otherwise}, 
 \end{cases}
 \]
 and hence we find
 \begin{align*}
   \mathrm{rank}(A^n-I \bmod{3}) =  \begin{cases}
 0 & \text{if $n \equiv 0 \pmod{4}$} \\
 2 & \text{otherwise}. 
 \end{cases}
 \end{align*}
The interpretation in terms $\#\mathrm{Col}_p$ follows again from Proposition \ref{cor1228}.
\end{proof}

\begin{remark}\label{rmk:generalization}
\begin{enumerate}
\item 
It is possible to extend the statement of Proposition \ref{prop5-2} for $p=3$ to any prime using the ray class numbers of the corresponding quadratic field. 
For instance, assume $p \neq 2$ and set
\begin{align*}
  \varphi_p = 
  \begin{cases}
    (p-1)^2 & \text{ if $p \equiv 1,4 \pmod{5}$}\\
    p^2-1 & \text{ if $p \equiv 2,3 \pmod{5}$}\\
    p(p-1) & \text{ if $p=5$}, 
  \end{cases}
\end{align*}
and let $h_{p,\infty}$ denote the ray class number of $\mathbb{Z}[\frac{1+\sqrt{5}}{2}]$ modulo $(p,\infty)$, i.e., $h_{p,\infty}$ is the order of the ray class group $\mathrm{Cl}_{p,\infty}(\mathbb{Z}[\tfrac{1+\sqrt{5}}{2}])$ of $\mathbb{Z}[\tfrac{1+\sqrt{5}}{2}]$ modulo $(p,\infty)$ (see \cite[p.33, Definition 5.4]{KL}). Then we can show $h_{p,\infty} \mid \varphi_p$ and 
\begin{align*}
  \# \mathrm{Col}_p(((\sigma_1\sigma_2^{-1})^n)^\wedge)=
\begin{cases} 
p^3 & \text{if $n \equiv 0 \pmod{\frac{\varphi_p}{h_{p,\infty}}}$} \\
p^2 & \text{if $p =5$ and $n \equiv 4,6 \pmod{10}$} \\
p & \text{otherwise}. \\
\end{cases}
\end{align*}
Indeed, this follows from a similar argument as in the case $p=3$, together with the exact sequence
\begin{align*}
 \varepsilon^{2\mathbb{Z}}
\rightarrow
\Big(\mathbb{Z}[\varepsilon]/p\mathbb{Z}[\varepsilon]\Big)^{\times} 
\rightarrow 
\mathrm{Cl}_{p,\infty}(\mathbb{Z}[\varepsilon])
\rightarrow 1, 
\end{align*}
where $\varepsilon = \tfrac{1+\sqrt{5}}{2}$ and $\varepsilon^{2\mathbb{Z}}$ is the subgroup of $\mathbb{Z}[\varepsilon]^{\times}$ generated by $\varepsilon^2$ (see \cite[p.42, Theorem 6.5]{KL}). 
\item 
The argument in Proposition \ref{prop5-2} and Remark \ref{rmk:generalization} (1) applies to an arbitrary $3$-braid $b$ and we can compute the rank of $A_{b^n}-I$ in terms of the corresponding unit in a quadratic extension of $\mathbb{Z}$. 
In particular, Proposition \ref{prop5-2-28} can also be proved in a similar way to Proposition \ref{prop5-2}. 
It might be interesting to investigate the applications of such an arithmetic interpretation of $p$-colorings to the study of braids $b$ or to the torus covering $T^2$-links. 
\end{enumerate}
\end{remark}

\section{Invariants of torus-covering $T^2$-links of degree 3. \newline 
(III) Quandle cocycle invariant associated with $p$-colorings}\label{sec6}
In Section \ref{sec6-1}, we review the quandle cocycle invariant associated with $p$-colorings \cite{CJKLS, CKS}. 
In Section \ref{0306-new-sec6-2}, we define the reduced quandle cocycle invariant (Definition \ref{reduced}) and prove Theorem \ref{thm0304}  and  Corollary \ref{thm1-3}. 
In Section \ref{sec6-2}, we focus on tri-colorings and classify $\mathcal{S}_3(a,b)$ under the qdl-equivalence relation, which is invariant under the quandle cocycle invariant (Theorem \ref{thm6-9}), and then we prove Theorem \ref{thm1-4}. 

\subsection{Quandle cocycle invariant}\label{sec6-1}
Let $X$ be a finite quandle, and 
let $G$ be an abelian group. A \textit{3-cocycle} is a map $f: X \times X \times X \to G$ satisfying the following conditions: 
\begin{eqnarray*}
&& \bullet \ f(s,t,u)+f(s*u, t*u, v)+f(s,u,v)
\\
&& \quad \quad =
f(s*t, u, v)+f(s,t, v)+f(s*v, t*v, u*v), \\
&& \bullet \ f(s,s,t)=0, \\
&& \bullet \ f(s,t,t)=0, 
\end{eqnarray*}
for any $s,t,u,v \in X$. 

For an $X$-coloring $C$ of a diagram $D$ of a surface-link $F$, at each triple point $\tau$ of $D$, we define the \textit{weight} $W_f(\tau; C)$ at $\tau$ for a 3-cocycle $f$ by $W_f(\tau; C)=f(x,y,z)$ (respectively, $-f(x,y,z)$) if $\tau$ is a positive (respectively, negative) triple point, where $x, y, z$ are the colors of sheets as in Figure \ref{fig2}. 
We denote by $X_3(D)$ the set of triple points of $D$. Put 
\[
\Phi_f(F; C)= \sum_{\tau\in X_3(D)} W_f(\tau; C). 
\]
It is known that $\Phi_f(F; C)$ is invariant under Roseman moves for diagrams colored by $X$. We call $\Phi_f(F; C)$ the {\it quandle cocycle invariant} of $F$ associated with an $X$-coloring $C$ and a 3-cocycle $f$. 
Since we consider a finite quandle $X$, the set of sheets $B(D)$ is a finite set, so $\mathrm{Col}_X(D)$ consists of a finite number of elements. We define the \textit{ quandle cocycle invariant} of $F$ associated with a 3-cocycle $f$ by the multi-set  
\[
\Phi_f(F)=\{ \Phi_f(F; C) \mid C \in \mathrm{Col}_X(D)\}. 
\]
\begin{figure}[ht]
\includegraphics[height=4.5cm]{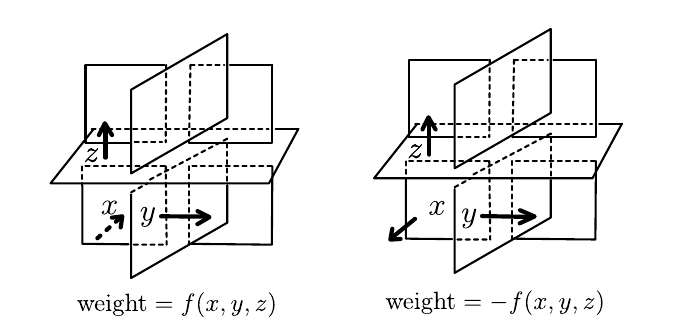}
\caption{The weight at a triple point, where $x$, $y$ and $z$ are the colors by an $X$-coloring $C$, and $f$ is a 3-cocycle.}\label{fig2}
\end{figure}
By definition, the quandle cocycle invariant for a surface-link $F$ associated with a 3-cocycle $f$ satisfies $\Phi_f(-F^*)=-\Phi_f(F)$, where $-\Phi_f(F)$ is the multi-set obtained from $\Phi_f(F)$ by replacing each element with its inverse. This relation is useful in showing that a surface-link is not $(-)$-amphicheiral. \\

The quandle cocycle invariant of $\mathcal{S}_n(a, \tilde{\Delta}^m)$ is calculated using the shadow cocycle invariants of the closed braid $\hat{a}$. We review the shadow cocycle invariant of a classical link $L$. 
Let $C$ be an $X$-coloring of a diagram $D$ of $L$ associated with a generic projection $\pi$. 
Then a {\it region} associated with $D$ is defined as a connected component of the complement of the image $\pi(L)$. We recall that we denote by $B(D)$ the set of arcs of $D$. We denote by $B^*(D)$ the the union of $B(D)$ and the set of regions of $\mathbb{R}^2$ associated with $D$. For $x \in X$, let $C_x^*: B^*(D) \to X$ be a map satisfying the following conditions. 

\begin{itemize}
\item
The color of the unbounded region is $x$. 
\item
The restriction of $C^*_x$ to $B(D)$ coincides with $C$. 
\item
Around each crossing, the regions are assigned with colors as in Figure \ref{fig3}.
\end{itemize}
Given $C$ and $x$, the map $C^*_x$ exists uniquely. 
We call the color of the unbounded region the {\it base color}. 
For a 3-cocycle $f$ and $C$ and $x$, we define the \textit{weight} $W_f^*(\tau; C, x)$ at a crossing $\tau$ as in Figure \ref{fig3}. 
We denote by $X_2(D)$ the set of crossings of $D$. 
We define 
\[
\Psi_f^*(L; C, x)=\sum_{\tau\in X_2(D)} W_f^*(\tau; C, x).
\]
It is known that $\Psi_f^*(L; C, x)$ is invariant under Reidemeister moves for diagrams colored by $X$. We call $\Psi_f^*(L; C, x)$ the \textit{shadow cocycle invariant} of $L$ with the base color $x$ associated with an $X$-coloring $C$ and a 3-cocycle $f$.

\begin{figure}[ht]
\includegraphics[height=4.5cm]{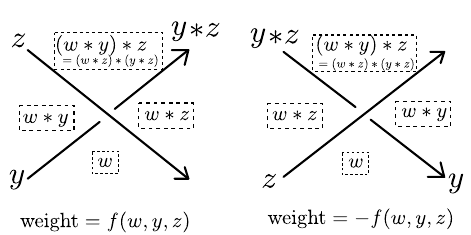}
\caption{A shadow coloring and the weight at a crossing, where $y$, $z$ and $w$ are the colors by $C^*_x$, and $f$ is a 3-cocycle.}\label{fig3}
\end{figure}

For the dihedral quandle $R_p$, it is known \cite{Mochizuki} that for any odd prime $p$, 3-cocycles for $R_p$ with the coefficient group $\mathbb{Z}/p\mathbb{Z}$ form a cyclic group with order $p$, with a generator $\theta_p: R_p \times R_p \times R_p \to \mathbb{Z}/p\mathbb{Z}$ given by
\[
\theta_p(s,t,u)=\frac{(s-t)((2u-t)^p+t^p-2u^p)}{p}. 
\]
We call $\theta_p$ the \textit{Mochizuki 3-cocycle}, and we denote the quandle cocycle invariant and the shadow cocycle invariant associated with $\theta_p$ by $\Phi_p(F)$ and $\Psi_p^*(L;C, x)$, respectively.

\begin{theorem}[{\cite[Theorem 7.1]{N5}}]\label{thm6-1}
Let $a$ be an $n$-braid and let $m$ be an integer.  Assume that $(A_{\tilde{\Delta}})^m \equiv I \pmod{p}$. Then 
\[
\Phi_p(\mathcal{S}_n(a, \tilde{\Delta}^m))=\{-mn\Psi_p^*(\hat{a}; C, 0) \mid C\in \mathrm{Col}_p(\hat{a})\}. 
\]
In particular, when $n=3$, 
\begin{eqnarray*}
\Phi_p(\mathcal{S}_3(a, \tilde{\Delta}^{m})) = \left\{  -3m\Psi_p^*(\hat{a}; C, 0) \mid C \in \mathrm{Col}_p(\hat{a}) \right\}. \end{eqnarray*}
\end{theorem}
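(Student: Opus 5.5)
The plan is to build an explicit diagram of $F=\mathcal{S}_n(a,\tilde{\Delta}^m)$ as a motion picture: the closed braid $\hat{a}\subset D^2\times\mathbf{m}$ is carried once around the longitude $\mathbf{l}$. The basis braid $\tilde{\Delta}^m$ means that during this motion the $n$ points of the fibre disk $D^2\times\{x\}$ rotate rigidly $m$ full turns about the centre of $D^2$. So, away from a small disk on $\mathbf{l}$, the surface is a product $\hat{a}\times I$. Inside that disk the cross-section $\hat{a}$ is rotated $m$ times about the core of the solid torus $\mathbf{p}^{-1}(\mathbf{m})$. After the projection $(x,y,z,t)\mapsto(x,y,z)$, this is a twist-spinning type construction, and all triple points of the broken surface diagram live in the rotation region.

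\emph{Step 1 (colorings).} I would first show that the $p$-colorings of $F$ correspond bijectively to the $p$-colorings $C$ of $\hat{a}$. By Proposition~\ref{newprop5-1} they correspond to $\ker(A_a-I)\cap\ker(A_{\tilde{\Delta}^m}-I)$. The hypothesis $(A_{\tilde{\Delta}})^m\equiv I \pmod p$ kills the second condition, so every coloring of $\hat{a}$ extends uniquely over the rotation region and closes up consistently around $\mathbf{l}$.

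\emph{Step 2 (locating triple points).} During one full rotation of the projected diagram of $\hat{a}$, each of the $n$ sheets swept out by the strands passes once over (or under) each crossing of $\hat{a}$. This is the moment when the rotating strand crosses the double point curve traced by that crossing, and it produces exactly one triple point per pair (strand, crossing) per turn. I would set up coordinates so that one strand lies above the others at the moment it crosses a given crossing. The three sheets at the triple point are then colored by the rotating strand's color and by the two colors of the crossing. Because the strand comes in from the unbounded region, the colors that appear are exactly those of the shadow coloring $C^*_x$ with base color $x$ equal to the color of the sheet above, and the weight equals $\pm W^*_{\theta_p}(\tau;C,x)$. Summing over the crossings $\tau$ of $\hat{a}$ gives $\pm\Psi^*_p(\hat{a};C,x)$. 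Summing over the $n$ strands and the $m$ turns, and fixing the orientation convention so that a positive rotation contributes the sign $-1$, yields $-mn$ times a shadow invariant.

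\emph{Step 3 (removing the dependence on the base color).} The base color varies with the strand, so one must replace $\Psi^*_p(\hat{a};C,x)$ by $\Psi^*_p(\hat{a};C,0)$. For a cocycle coming from a quandle $3$-cocycle on a dihedral quandle, the shadow invariant of a colored link changes by a coboundary-type term under change of base color, and this term vanishes for closed diagrams. Alternatively, one can use the symmetry $x\mapsto 2y-x$ together with the translation invariance $\theta_p(s+c,t+c,u+c)=\theta_p(s,t,u)$, which follows from the explicit formula because $(2u-t)^p+t^p-2u^p$ is shift-invariant mod $p^2$ after dividing. I expect this step, together with the bookkeeping of signs and colors at the $mn\cdot\#X_2$ triple points in Step 2, to be the main obstacle. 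I would handle it by first proving directly from the formula for $\theta_p$ that $\Psi^*_p(\hat{a};C,x)$ is independent of $x$. For $n=3$ the stated formula is then the specialization of the general one.
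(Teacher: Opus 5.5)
The paper does not reprove this statement. It quotes \cite[Theorem 7.1]{N5}, where, as the proof of Theorem~\ref{thm6-6} indicates, the weights are read off from the triple points created when $a\tilde{\Delta}^m$ is transformed into $\tilde{\Delta}^m a$. So your argument has to stand on its own, and there is a genuine gap.

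\textbf{Step 2.} Step 1 is fine: it follows from Proposition~\ref{newprop5-1} together with $A_{\tilde{\Delta}^m}=(A_{\tilde{\Delta}})^m$. The gap is Step 2, which carries the whole content of the theorem. Its description of the triple points is wrong for the rigid-rotation model you set up.
\begin{itemize}
\item In that model the triple points are the Reidemeister III moves of the rotating diagram of $\hat a$, i.e.\ the moments when three of the $n$ points of a fibre disk line up in the $t$-direction.
\item A crossing of $\hat a$ already uses two strands, so only the other $n-2$ strands can form a triple point with it.
\item During one full turn each of those strands meets the crossing twice: once as the top sheet and once as the bottom sheet, because the order along the line reverses after half a turn.
\item In chart terms, pushing one letter $\sigma_i^{\pm1}$ of $a$ through $\tilde{\Delta}$ costs $2(n-2)$ white vertices. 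For $n=3$ that is $2$ triple points per crossing per turn, not $3$.
\item At the triple points where the passing strand is on top, its colour sits in the last slot of $\theta_p$. In a shadow weight, by contrast, the region colour plays the role of the bottom sheet.
\end{itemize}
Hence the weights are not termwise shadow weights, and there is no ``sum over the $n$ strands''. Both the factor $n$ and the sign in $-mn$ are read off from the statement rather than derived. What must actually be proved is a global identity, coming from the $3$-cocycle condition for $\theta_p$: the $2|m|(n-2)$ weights per letter of $a$ add up to $-mn\Psi^*_p(\hat a;C,0)$.

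You also overlook that the hypothesis gives $(A_{\tilde{\Delta}})^m\equiv I$ but not $A_{\tilde{\Delta}}\equiv I$. For $n=3$, the matrix $A^3=A_{\tilde{\Delta}}$ from the proof of Proposition~\ref{prop5-2-28} acts on colours by the reflection $x\mapsto 2w-x$ with $w=x_1-x_2+x_3$. So consecutive turns carry the colourings $C$ and $2w-C$, and you must show their contributions agree before multiplying by $m$.

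\textbf{Step 3.} Step 3 fails as proposed, for three reasons.
\begin{itemize}
\item $\theta_p$ is not translation invariant. For $p=3$ the formula gives $\theta_3(s,t,u)\equiv 2(s-t)(u-t)^2u \pmod 3$, so $\theta_3(1,0,1)=2$ but $\theta_3(2,1,2)=1$.
\item Translations, and likewise reflections $x\mapsto 2y-x$, preserve only the cohomology class of $\theta_p$. They also move the colouring together with the base colour. So they give $\Psi^*_p(\hat a;C+c,x+c)=\Psi^*_p(\hat a;C,x)$, not independence of $x$.
\item The ``coboundary term that vanishes on closed diagrams'' is only asserted. Base-colour independence is not a formal consequence of the cocycle condition: it fails for trivial quandles, where every normalized function is a $3$-cocycle.
\end{itemize}
For $R_p$ a workable reduction is available. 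Write $\theta_p(s,t,u)=(s-t)g(t,u)$ with $g(t,u)=((2u-t)^p+t^p-2u^p)/p$; this is affine in $s$. Every region colour of $C^*_x$ has the form $\pm x+c_R$. Hence $x\mapsto\Psi^*_p(\hat a;C,x)$ is affine, and you must show that its slope $\sum_\tau \pm\epsilon(\tau)\,g(y_\tau,z_\tau)$ vanishes. That is a lemma you still have to prove, and even with it Step 2 remains the missing core.
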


\begin{theorem}[{\cite[Theorem 7.2]{N5}}, see also \cite{AS}]\label{thm6-2}
Let $a$ be an $n$-braid presented by 
\[
a=\prod_{j=1}^N \sigma_1^{pk_{1,j}} \sigma_2^{pk_{2,j} } \cdots \sigma_{n-1}^{pk_{n-1,j} }    
\]
for some integer $N>0$ and $k_{1,1}, \ldots, k_{n-1,N} \in \mathbb{Z}$, and let $\nu_i=\sum_{j=1}^N k_{i,j}$ $(i=1, \ldots, n-1)$. 
Let $m$ be any integer. Then, when $n$ is odd, 
\[
\Phi_p(\mathcal{S}_n(a, \tilde{\Delta}^{2m}))=\left\{ \left. \underbrace{2mn \sum_{i=1}^{n-1} \nu_i x_i^2, \ldots, 2mn \sum_{i=1}^{n-1} \nu_i x_i^2}_{p} \,\right\vert \, x_1, \ldots, x_{n-1} \in \mathbb{Z}/p\mathbb{Z}\right\}, 
\]
and when $n$ is even, 
\[
\Phi_p(\mathcal{S}_n(a, \tilde{\Delta}^{pm}))=\{  \underbrace{0, \ldots, 0}_{p^n} \}.  
\]
In particular, when $n=3$, 
\begin{eqnarray*}
\Phi_p(\mathcal{S}_3(a, \tilde{\Delta}^{2m})) =
\{  \underbrace{6m(\nu_1 x_1^2+\nu_2 x_2^2), \ldots, 6m(\nu_1 x_1^2+\nu_2 x_2^2)}_{p} \mid  x_1, x_2 \in \mathbb{Z}/p\mathbb{Z}\}.
\end{eqnarray*}
\end{theorem}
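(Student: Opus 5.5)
The plan is to reduce everything to Theorem \ref{thm6-1} and then compute the shadow cocycle invariant of $\hat{a}$ directly. For $n$ odd, Proposition \ref{claim-1} gives $A_{\tilde{\Delta}^2}\equiv I \pmod p$. So Theorem \ref{thm6-1}, applied with $m$ replaced by $2m$, yields
\[
\Phi_p(\mathcal{S}_n(a,\tilde{\Delta}^{2m}))=\{-2mn\Psi_p^*(\hat{a};C,0)\mid C\in\mathrm{Col}_p(\hat{a})\}.
\]
For $n$ even, Proposition \ref{claim-1} gives $A_{\tilde{\Delta}^p}\equiv I\pmod p$. Theorem \ref{thm6-1} with $m$ replaced by $pm$ then multiplies every value by $-pmn\equiv 0$ in $\mathbb{Z}/p\mathbb{Z}$. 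Once we know $\#\mathrm{Col}_p(\hat{a})=p^n$, which is shown next, the even case follows.

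\textbf{Step 1: the colorings of $\hat{a}$.} On the strands $i,i+1$ with colors $(x,y)$, the matrix $A_{\sigma_i}$ acts as $(x,y)\mapsto(y,2y-x)$. Writing $d=y-x$, this is the shift $(x,y)\mapsto(x+d,y+d)$. Hence $A_{\sigma_i^k}$ shifts both colors by $kd$, and $A_{\sigma_i^{pk}}\equiv I\pmod p$. Therefore $A_a\equiv I$, and every $\mathbf{x}\in(\mathbb{Z}/p\mathbb{Z})^n$ gives a coloring, so $\#\mathrm{Col}_p(\hat{a})=p^n$. Moreover, after each block $\sigma_i^{pk_{i,j}}$ the colors of all strands are back to their initial values. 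This lets us compute the weight block by block, with the same boundary data throughout.

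\textbf{Step 2: the local contribution of a block $\sigma_i^{pk}$.} The colors are $x_1,\dots,x_n$ at the bottom, and the base color $0$ sits in the unbounded region, which we place to the left. The region immediately left of strands $i,i+1$ then has a color $w$ determined by $0,x_1,\dots,x_{i-1}$, and $w$ does not change along the block, because strands $1,\dots,i-1$ keep their colors. The arc colors run through $x_i+jd$ with $d=x_{i+1}-x_i$, and the region colors between the two strands are likewise an explicit arithmetic progression. Reading off Figure \ref{fig3}, the contribution is a sum of the form
\[
k\sum_{j=0}^{p-1}\theta_p\bigl(w',\,x_i+jd,\,x_i+(j+1)d\bigr),
\]
with $w'$ the appropriate region color (the arguments may be permuted depending on the convention in Figure \ref{fig3}). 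The task is to show that
\[
\sum_{j=0}^{p-1}\theta_p(w',t_j,t_{j+1})\equiv -d^2
\]
in $\mathbb{Z}/p\mathbb{Z}$, independent of $w'$. This is the main obstacle.

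I would first expand $\bigl((2u-t)^p+t^p-2u^p\bigr)/p$ via the binomial theorem as $\sum_{l=1}^{p-1}\frac{1}{p}\binom{p}{l}(\cdots)$. Then I would sum over $j$ using $\sum_{j=0}^{p-1}j^r\equiv 0$ for $0\le r<p-1$ and $\equiv -1$ for $r=p-1$, so that only the top-degree terms survive. The $w'$-dependence should cancel by the same power-sum identities. A fallback is to check the identity on the $(2,p)$-torus link, whose shadow invariant is classically known to be $\pm d^2$, and to use invariance of the shadow invariant under change of base color. The overall sign and any constant would be fixed by comparing with the statement, using the case $n=3$ as a check.

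\textbf{Step 3: assembling.} Summing over blocks gives $\Psi_p^*(\hat{a};C,0)=-\sum_i\nu_i(x_{i+1}-x_i)^2$. Substituting into the first display gives
\[
\Phi_p(\mathcal{S}_n(a,\tilde{\Delta}^{2m}))=\Bigl\{2mn\sum_{i=1}^{n-1}\nu_i y_i^2\Bigr\},\qquad y_i=x_{i+1}-x_i .
\]
Finally, the map $(x_1,\dots,x_n)\mapsto(x_1,y_1,\dots,y_{n-1})$ is a bijection of $(\mathbb{Z}/p\mathbb{Z})^n$. Since the value does not depend on $x_1$, each value attached to $(y_1,\dots,y_{n-1})$ occurs exactly $p$ times, which is the stated multiset. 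Setting $n=3$ gives the displayed special case with $6m$.
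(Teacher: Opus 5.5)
Your proposal is correct and follows essentially the route the paper indicates. The paper defers the proof to \cite{N5} (see also \cite{AS}) and only notes that Proposition~\ref{claim-1} is used; your argument reduces via Proposition~\ref{claim-1} and Theorem~\ref{thm6-1} to $\Psi_p^*(\hat{a};C,0)$, uses that the colors return to $(x_1,\dots,x_n)$ after every block $\sigma_i^{pk_{i,j}}$, and evaluates each block's contribution as $-k_{i,j}(x_{i+1}-x_i)^2$. The local identity you flag as the main obstacle holds exactly as you state it, so the sign comes out of the computation and need not be calibrated against the statement: the source region of each crossing in a block (the left region $w$ or the right region $w+2d$) is constant, and writing $(2u-t)^p+t^p-2u^p=2\sum_{l\ \mathrm{even},\,2\le l\le p-1}\binom{p}{l}u^{p-l}(u-t)^l$ with $u-t=\pm d$ and summing over $u\in\mathbb{Z}/p\mathbb{Z}$, the power sums kill everything except the $l=2$ term, which gives $(p-1)d^2\equiv -d^2$ independently of the region color.
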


\vspace{0.5cm}
We use Proposition \ref{claim-1} to show Theorem \ref{thm6-2}. 

Now, we give a class of 3-braids which has the same quandle cocycle invariant for $p$-colorings. 
For a 3-braid $a$ 
with a presentation $a=\sigma_{1}^{n_1 } \sigma_{2}^{n_2}\sigma_{1}^{n_3 } \cdots \sigma_2^{n_k}$ $(n_1, \ldots, n_k \in \mathbb{Z})$, we define the set $[a]$ of 3-braids associated with $p$ by  
\[
[a]=\{ \sigma_{1}^{n_1'} \sigma_{2}^{n_2'}\sigma_{1}^{n_3'} \cdots \sigma_2^{n_k'} \mid n_i' \equiv n_i \ \mathrm{mod}\ {p} \ (i=1,2,\ldots, k)\}. 
\]

\begin{proposition}\label{prop2026-6-3}
For an arbitrary 3-braid $a$ and any integer $m$, we have the following. 
\begin{enumerate}
\item[$(1)$]
$\# \mathrm{Col}_p(\hat{a})=\#\mathrm{Col}_p(\hat{a_1})$ for any 3-braid $a_1 \in [a]$. 
\vspace{0.2cm}
\item[$(2)$]
$\Phi_p(\mathcal{S}_3(a,\tilde{\Delta}^{2pm}))= \{ \underbrace{  0, \ldots, 0 }_{\# \mathrm{Col}_p(\hat{a})} \}$. 
\vspace{0.2cm}
\item[$(3)$]
$\Phi_p(\mathcal{S}_3(a,\tilde{\Delta}^{2pm}))=\Phi_p(\mathcal{S}_3(a_1, \tilde{\Delta}^{2pm}))$ for any $a_1 \in [a]$.
\end{enumerate}

\end{proposition}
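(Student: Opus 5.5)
For part (1), I will work mod $p$ at the level of the coloring matrices. By the conventions for $A_b$, we have
\[
A_{\sigma_1^{n_1}\sigma_2^{n_2}\cdots}=\cdots A_{\sigma_2}^{n_2}A_{\sigma_1}^{n_1}.
\]
So it suffices to show $A_{\sigma_i}^p\equiv I \pmod p$ for $i=1,2$. Then $A_{\sigma_i}^{n}\equiv A_{\sigma_i}^{n'} \pmod p$ whenever $n\equiv n'\pmod p$. Since $A_{\sigma_i}$ is invertible, this holds for negative exponents as well.

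To prove $A_{\sigma_1}^p\equiv I$, write $A_{\sigma_1}=I+N$ on the first two coordinates, where
\[
N=\begin{pmatrix}-1&1\\-1&1\end{pmatrix}.
\]
This $N$ satisfies $N^2=0$, so $A_{\sigma_1}^k=I+kN$, which is $\equiv I$ for $k=p$. The same computation applies to $A_{\sigma_2}$. Equivalently, on the colors $(x_i,x_{i+1})$, the braid $\sigma_i^k$ produces $(x_i+k(x_{i+1}-x_i),\,\dots)$, which is the identity mod $p$ when $p\mid k$.

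Hence $A_{a_1}\equiv A_a \pmod p$ for every $a_1\in[a]$. The ranks of $A_{a_1}-I$ and $A_a-I$ mod $p$ therefore agree, and Proposition~\ref{cor1228} gives equal numbers of $p$-colorings.

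For part (2), I will apply Theorem~\ref{thm6-1} with $n=3$ and exponent $2pm$ in place of $m$. The hypothesis $(A_{\tilde\Delta})^{2pm}\equiv I$ follows from Proposition~\ref{claim-1}, since $3$ is odd and so $A_{\tilde\Delta^2}\equiv I$. Theorem~\ref{thm6-1} then gives
\[
\Phi_p(\mathcal{S}_3(a,\tilde\Delta^{2pm}))=\{-3\cdot 2pm\,\Psi_p^*(\hat a;C,0)\mid C\in\mathrm{Col}_p(\hat a)\}.
\]
Every element of this multiset is $p$ times an element of $\mathbb{Z}/p\mathbb{Z}$, hence $0$. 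The multiset has exactly $\#\mathrm{Col}_p(\hat a)$ entries, so this is the stated set of zeros.

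Part (3) is immediate from (2) applied to both $a$ and $a_1$, combined with (1) to match the cardinalities. Note that (2) needs no condition linking $a$ and $a_1$; only the count matters.

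The only step requiring real care is the $\sigma_i^p$ computation. One must make sure that negative exponents and the convention $A_{b_0b_1}=A_{b_1}A_{b_0}$ do not interfere. Neither does: the congruence $A_a\equiv A_{a_1}$ is checked factor by factor. There is also a bookkeeping point about $\mathcal{S}_3(a,\tilde\Delta^{2pm})$ being defined at all. It is, because $\tilde\Delta$ is central, so $a$ commutes with $\tilde\Delta^{2pm}$.
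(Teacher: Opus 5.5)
Your proof is correct and follows the paper's argument. Part (1) rests on $A_{\sigma_i^p}\equiv I \pmod p$, which you verify via $A_{\sigma_i}=I+N$ with $N^2=0$ where the paper only asserts it; part (2) applies Theorem \ref{thm6-1} together with $A_{\tilde{\Delta}^2}\equiv I \pmod p$ from Proposition \ref{claim-1}, so the factor $p$ kills every weight in $\mathbb{Z}/p\mathbb{Z}$; and part (3) combines (1) and (2) exactly as the paper does.
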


\begin{proof}
Since $A_{\sigma_i^p} \equiv I \pmod{p}$ for $i=1,2$, the numbers of $p$-colorings of $\hat{a}$ and $\hat{a_1}$ coincide for any $a_1 \in [a]$; thus we have (1). 
Since $A_{\tilde{\Delta}^{2}} \equiv I \pmod{p}$ by Proposition \ref{claim-1}, Theorem \ref{thm6-1} implies 
(2). The equation (3) is the result of (1) and (2). 
\end{proof}

\subsection{Proof of Theorem \ref{thm0304} and 
 Corollary \ref{thm1-3}}\label{0306-new-sec6-2}

For $p$-colorings, we use another presentation of the quandle cocycle invariant of a surface-link $F$, which is given by 
\begin{equation}\label{eq0304}
\sum_{C \in \mathrm{Col}_p(D)} v^{\Phi_p(F; C)} \in \mathbb{Z}[v, v^{-1}]/(v^p-1), 
\end{equation}
where we use the notation in Subsection \ref{sec6-1}. 
We denote the quandle cocycle invariant in this form also 
 by the same notation $\Phi_p(F)$ or by $\Phi_p(F)(v)$. 
Moreover, it will be convenient to consider the following \textit{reduced quandle cocycle invariant} $\tilde{\Phi}_p$. 

Let $\zeta_p =e^{2\pi \sqrt{-1}/p}$ denote the primitive root of unity, and let 
\begin{align*}
  \mathbb{Z}[\zeta_p] = \Big\{\sum_{j=0}^{p-2}c_j \zeta_p^j  \,\Big|\, c_0, \dots, c_{p-2} \in \mathbb{Z}\Big\}
\end{align*}
be the subring of $\mathbb{C}$ generated by $\zeta_p$ over $\mathbb{Z}$. 
 
\begin{definition}\label{reduced}
We define the \textit{reduced quandle cocycle invariant} $\tilde{\Phi}_p \in \mathbb{Z}[\zeta_p]$ to be the value of $\Phi_p(F)(v) \in \mathbb{Z}[v,v^{-1}]/(v^p-1)$ at $v=\zeta_p$, that is, 
\begin{align*}
\tilde{\Phi}_p(F) \coloneqq \Phi_p(F)(\zeta_p) = \sum_{C \in \mathrm{Col}_p(D)} \zeta_p^{\Phi_p(F; C)} \in \mathbb{Z}[\zeta_p] \subset \mathbb{C}. 
\end{align*}
\end{definition}

\begin{lemma}\label{lem:complex conjugate}
We have
\begin{align*}
\Phi_p(-F^*)(v)=\Phi_p(F)(v^{-1}), \qquad 
\tilde{\Phi}_p(-F^*)=\overline{\tilde{\Phi}_p(F)}, 
\end{align*}
where $\overline{\phantom{z}}$ denotes the complex conjugation, e.g., $\overline{\zeta_p}=\zeta_p^{-1}=\zeta_p^{p-1}$. 
\end{lemma}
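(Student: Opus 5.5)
The plan is to derive both identities from the relation $\Phi_f(-F^*)=-\Phi_f(F)$ noted after the definition of the quandle cocycle invariant, and then pass to the polynomial form \eqref{eq0304} and evaluate at $v=\zeta_p$.

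First I will justify the multi-set relation at the level of individual colorings. The orientation-reversed mirror image $-F^*$ has a diagram $D'$ obtained from $D$ by composing with an orientation-reversing map of $\mathbb{R}^4$ that, say, reflects the $t$-coordinate, and reversing the orientation of $F$. The projection to $\mathbb{R}^3$ is the same, but crossing information along double point curves is swapped. Combining this with the orientation reversal, I expect the coloring rule to be preserved. Sheets keep their colors because the dihedral operation $x*y=2y-x$ is involutive, so the direction of the over-sheet's normal is irrelevant. This gives a natural bijection $C\mapsto C'$ between $\mathrm{Col}_p(D)$ and $\mathrm{Col}_p(D')$.

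At each triple point, the colors $x,y,z$ entering the weight in Figure \ref{fig2} are read off in the same way. The sign of the triple point, however, flips: reversing all three normal vectors reverses the handedness of $(\mathbf{v}_t,\mathbf{v}_m,\mathbf{v}_b)$, and the mirror keeps the top/middle/bottom order up to the standard identification. This is exactly the content of the stated relation $\Phi_f(-F^*)=-\Phi_f(F)$, which I will take as given, together with this coloring-wise refinement $\Phi_p(-F^*;C')=-\Phi_p(F;C)$.

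Summing over colorings then gives
\[
\Phi_p(-F^*)(v)=\sum_{C}v^{-\Phi_p(F;C)}=\Phi_p(F)(v^{-1})
\]
in $\mathbb{Z}[v,v^{-1}]/(v^p-1)$. This is well defined because $v\mapsto v^{-1}$ preserves the ideal $(v^p-1)$.

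For the reduced invariant, I substitute $v=\zeta_p$. Since $\zeta_p^{-1}=\overline{\zeta_p}$ and the coefficients are integers, hence real, we get
\[
\tilde{\Phi}_p(-F^*)=\sum_C \zeta_p^{-\Phi_p(F;C)}=\sum_C\overline{\zeta_p^{\Phi_p(F;C)}}=\overline{\tilde{\Phi}_p(F)}.
\]

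The only delicate point is the coloring-wise sign reversal, meaning that the bijection of colorings really matches each coloring's weight to its negative rather than merely matching the multi-sets. I will handle it by the local check at a triple point described above, relying on the already stated relation for the sign.
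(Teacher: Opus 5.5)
Your proof is correct and follows the paper, whose entire argument is that $\Phi_p(-F^*)=-\Phi_p(F)$ as multi-sets; summing $v^{\Phi_p(F;C)}$ and substituting $v=\zeta_p$ then gives both identities exactly as you write. The coloring-wise refinement you call the delicate point is not needed, since $\sum_{C}v^{\Phi_p(F;C)}$ depends only on the multi-set, and your sketch of it does not work: reflecting in the projection direction $t$ swaps over- and under-sheets, so the sheets of the new diagram are not those of $D$ and top and bottom are exchanged, whereas a reflection such as $(x,y,z,t)\mapsto(x,y,-z,t)$ combined with orientation reversal carries sheets, normals and colors across directly while reversing every triple-point sign.
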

\begin{proof}
  This follows form the fact that $\Phi_p(-F^*)=-\Phi_p(F)$ as multi-sets. 
\end{proof}

Note that we can easily recover the original $\Phi_p(F)$ from $\tilde{\Phi}_p(F)$. 
Indeed, we have an injective ring homomorphism 
\begin{align*}
 \iota\colon  \mathbb{Z}[v,v^{-1}]/(v^p-1) \hookrightarrow \mathbb{Z} \times \mathbb{Z}[\zeta_p];\, 
P(v) \mapsto (P(1),P(\zeta_p)), 
\end{align*}
and the image of this map is
\begin{align*}
\mathrm{image}(\iota)=  \Big\{\Big(x,\sum_{j=0}^{p-2}c_j \zeta_p^j\Big) \in \mathbb{Z}\times \mathbb{Z}[\zeta_p] \,\Big|\, x-\sum_{j=0}^{p-2}c_j \equiv 0 \pmod{p}\Big\}. 
\end{align*}
The inverse image of $\Big(x,\sum_{j=0}^{p-2}c_j \zeta_p^j\Big) \in \mathrm{image}(\iota)$ is given by
\begin{align}\label{eqn:inverse of iota}
  \sum_{j=0}^{p-2}c_j v^j + \frac{x-\sum_{j=0}^{p-2}c_j}{p}\sum_{j=0}^{p-1}v^j. 
\end{align}
In the case of the quandle cocycle invariant $\Phi_p(F)$, we have $\Phi_p(F)(1) = \#\mathrm{Col}_p(D)$, and hence we can recover $\Phi_p(F)$ by applying \eqref{eqn:inverse of iota} to 
\begin{align*}
  (\#\mathrm{Col}_p(D), \tilde{\Phi}_p(F)) \in \mathrm{image}(\iota). 
\end{align*}

Recall that $\Big(\frac{~}{p}\Big)$ denotes the Legendre symbol, i.e., for $\nu \in \mathbb{Z}/p\mathbb{Z}$, we have
\begin{align*}
  \Big(\frac{\nu}{p}\Big)
=
\begin{dcases}
  1 &\text{ if $\nu$ is a quadratic residue mod ${p}$ and $\nu \not\equiv 0 \pmod{p}$} \\
  -1 &\text{ if $\nu$ is a quadratic non-residue mod ${p}$ and $\nu \not\equiv 0 \pmod{p}$} \\
  0 &\text{ if $\nu \equiv 0 \pmod{p}$}. 
\end{dcases}
\end{align*}
We briefly review some standard facts about the quadratic Gauss sum. 
\begin{definition}
For $\nu \in \mathbb{Z}/p\mathbb{Z}$ (and an odd prime number $p$), the quadratic Gauss sum $G(\nu,p)$ is defined as
\begin{align*}
  G(\nu,p) \coloneqq \sum_{j=0}^{p-1}\zeta_p^{\nu j^2}. 
\end{align*}
\end{definition}

\begin{proposition}[{\cite[pp.86--87]{Lang}}]\label{prop:gauss sum}
We have
\begin{align*}
  G(\nu,p)
=
  \begin{dcases}
  p &\text{ if $\nu \equiv 0 \pmod{p}$}  \\
  \Big(\frac{\nu}{p}\Big) \varepsilon_p \sqrt{p} &\text{ if $\nu \not\equiv 0 \pmod{p}$}, 
  \end{dcases}
\end{align*}
where 
\begin{align*}
  \varepsilon_p 
=
\begin{dcases}
  1 &\text{ if $p\equiv 1 \pmod{4}$}\\ 
  \sqrt{-1} &\text{ if $p\equiv 3 \pmod{4}$}. 
\end{dcases}
\end{align*}
\end{proposition}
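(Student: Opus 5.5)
The case $\nu\equiv 0 \pmod{p}$ is immediate: every term $\zeta_p^{0}$ equals $1$, so $G(0,p)=p$. For $\nu\not\equiv 0$, I would first reduce to $\nu=1$. The number of solutions $j\in\mathbb{Z}/p\mathbb{Z}$ of $j^2\equiv t$ is $1+\big(\frac{t}{p}\big)$, and $\sum_{t}\zeta_p^{\nu t}=0$. Hence
\begin{align*}
G(\nu,p)=\sum_{t=0}^{p-1}\Big(1+\Big(\frac{t}{p}\Big)\Big)\zeta_p^{\nu t}=\sum_{t=0}^{p-1}\Big(\frac{t}{p}\Big)\zeta_p^{\nu t}.
\end{align*}
Substituting $t\mapsto \nu^{-1}t$ and using multiplicativity of the Legendre symbol gives $G(\nu,p)=\big(\frac{\nu}{p}\big)G(1,p)$. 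It therefore suffices to show $G(1,p)=\varepsilon_p\sqrt{p}$.

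Next I would determine $G(1,p)$ up to sign. Write $g=G(1,p)$ in the Legendre-symbol form above. Then
\begin{align*}
g\,\overline{g}=\sum_{s,t}\Big(\frac{st}{p}\Big)\zeta_p^{s-t}.
\end{align*}
Putting $s=ut$ with $t\neq 0$ and summing over $t$ first, the inner geometric sum $\sum_{t\neq 0}\zeta_p^{(u-1)t}$ equals $p-1$ if $u=1$ and $-1$ otherwise. Combined with $\sum_{u\neq 0}\big(\frac{u}{p}\big)=0$, this yields $|g|^2=p$. The same computation without conjugation, together with $\overline{g}=\big(\frac{-1}{p}\big)g$ (obtained by substituting $t\mapsto -t$), gives
\begin{align*}
g^2=\Big(\frac{-1}{p}\Big)p=(-1)^{(p-1)/2}p .
\end{align*}
So $g=\pm\sqrt{p}$ when $p\equiv 1 \pmod 4$ and $g=\pm\sqrt{-1}\sqrt{p}$ when $p\equiv 3 \pmod 4$.

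The main obstacle is fixing the sign, which is Gauss's classical difficulty. My first approach is the trace of the discrete Fourier matrix $F=(\zeta_p^{jk})_{0\le j,k\le p-1}$, whose trace is exactly $g$. Since $F^2$ equals $p$ times the permutation $j\mapsto -j$, the eigenvalues of $F$ lie in $\{\pm\sqrt{p},\pm\sqrt{-1}\sqrt{p}\}$. Their multiplicities are constrained by two facts: the trace of $F^2$ equals $p$ (the only fixed point of $j\mapsto -j$ is $0$), and $\det F$ is the Vandermonde product $\prod_{j<k}(\zeta_p^k-\zeta_p^j)$, whose argument can be computed explicitly. From these constraints I would solve for the multiplicities and read off the trace, giving $\varepsilon_p\sqrt{p}$. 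If that bookkeeping becomes messy, the fallback is the analytic route via Poisson summation for $\sum_j e^{2\pi\sqrt{-1}j^2/p}$, which is the proof in \cite[pp.86--87]{Lang}. Since the paper cites this proposition as a standard result, invoking that reference is acceptable for this step.
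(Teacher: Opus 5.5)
Your proof is correct. The paper gives no argument for this proposition; it quotes the evaluation from Lang, and likewise quotes $\varepsilon_p^2=\big(\frac{-1}{p}\big)$ from p.~77. Your route differs by supplying a self-contained proof in three steps: the reduction $G(\nu,p)=\big(\frac{\nu}{p}\big)G(1,p)$, the evaluation $G(1,p)^2=\big(\frac{-1}{p}\big)p$, and Schur's trace argument for the sign. The citation buys brevity for a textbook fact. Your route gives a purely finite, elementary proof, and its first step is exactly the form in which the paper uses the result ($p\prod_i G(2mn\nu_i,p)$ in the proof of Theorem~\ref{thm0304}).

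One small point in the sign step. The trace of $F^2$ and $\det F$ alone do not determine the multiplicities; you also need $|g|^2=p$ from your previous step. With that added, the bookkeeping closes. Let $a,b,c,d$ be the multiplicities of $\sqrt p,-\sqrt p,\sqrt{-1}\sqrt p,-\sqrt{-1}\sqrt p$. Then
\begin{align*}
a+b=\tfrac{p+1}{2},\quad c+d=\tfrac{p-1}{2},\quad (a-b)^2+(c-d)^2=1,\quad \det F=(\sqrt{-1})^{p(p-1)/2}p^{p/2}.
\end{align*}
The last identity holds because $\zeta_p^k-\zeta_p^j=2\sqrt{-1}\sin\big(\pi(k-j)/p\big)e^{\pi\sqrt{-1}(j+k)/p}$, and $\frac{1}{p}\sum_{j<k}(j+k)=\frac{(p-1)^2}{2}$ is even. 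Comparing with $\det F=p^{p/2}(-1)^{b+d}(\sqrt{-1})^{c+d}$ gives $b+d\equiv\frac{p-1}{2}\pmod 2$.

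For $p\equiv 1\pmod 4$, these constraints force $c=d=b=\frac{p-1}{4}$ and $a=b+1$. For $p\equiv 3\pmod 4$, they force $a=b=\frac{p+1}{4}$, $d=\frac{p-3}{4}$ and $c=d+1$. Either way $\mathrm{tr}\,F=\varepsilon_p\sqrt p$, so the Poisson-summation fallback is unnecessary.
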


This enables us to prove Theorem \ref{thm0304} and Corollary \ref{thm1-3}. 
In other words, we can compute the quandle cocycle invariant $\tilde{\Phi}_p(\mathcal{S}_n(a,\tilde{\Delta}^{2m}))$ for $n$-braids $a$ in Theorem \ref{thm6-2} and give a sufficient condition for $\mathcal{S}_n(a,\tilde{\Delta}^{2m})$ not to be $(-)$-amphicheiral.  
\begin{proof}[Proof of Theorem \ref{thm0304}]
Recall that we need to prove
 \begin{align*}
  \tilde{\Phi}_p(\mathcal{S}_n(a,\tilde{\Delta}^{2m}))
=
p^{n-\frac{1}{2}\#J}\varepsilon_p^{\#J}
\prod_{i \in J}\Big(\frac{2mn\nu_i}{p}\Big)
\end{align*}
for 
\begin{itemize}
\item $a=\prod_{j=1}^N \sigma_1^{pk_{1,j}} \sigma_2^{pk_{2,j} } \cdots \sigma_{n-1}^{pk_{n-1,j} }$, 
\item 
$\nu_i =\sum_{j=1}^N k_{i,j}$ $(i=1, \ldots, n-1)$, 
\item 
$J = \{ i \in\{1, \dots, n-1\} \mid 2mn \nu_i \not\equiv 0 \pmod{p}\}$.
\end{itemize}
By Theorem \ref{thm6-2} we have
\begin{align*}
\tilde{\Phi}_p(\mathcal{S}_n(a,\tilde{\Delta}^{2m}))
&=
p\sum_{(x_1,\dots,x_{n-1})\in (\mathbb{Z}/p\mathbb{Z})^{n-1}}\zeta_p^{2mn\sum_{i=1}^{n-1}\nu_i x_i^2} \\
&= 
p \prod_{i=1}^{n-1} G(2mn\nu_i,p). 
\end{align*}
Hence the theorem follows from Proposition \ref{prop:gauss sum}. 
\end{proof}

\begin{proof}[Proof of Corollary \ref{thm1-3}]
We need to show that
\begin{align*}
  \tilde{\Phi}_p(\mathcal{S}_n(a,\tilde{\Delta}^{2m})) \neq \overline{\tilde{\Phi}_p(\mathcal{S}_n(a,\tilde{\Delta}^{2m}))}
\end{align*}
if and only if $p\equiv 3 \pmod{4}$ and $\#J$ is odd, and that $\mathcal{S}_n(a,\tilde{\Delta}^{2m})$ is not $(-)$-amphicheiral if $p\equiv 3 \pmod{4}$ and $\#J$ is odd. 
By Theorem \ref{thm0304}, we see that $\tilde{\Phi}_p(\mathcal{S}_n(a,\tilde{\Delta}^{2m})) \in \mathbb{R}$ if and only if $p\equiv 1 \pmod{4}$ or $\#J$ is even. 
This shows the first part. 
The latter assertion then follows from Lemma \ref{lem:complex conjugate}. 
\end{proof}

As remarked earlier, we can recover $\Phi_p(\mathcal{S}_n(a,\tilde{\Delta}^{2m}))(v)$ from $\tilde{\Phi}_p(\mathcal{S}_n(a,\tilde{\Delta}^{2m}))$. For instance, in the case $n=3$ we have the following 
\begin{theorem}
Let the notation be the same as in Theorem \ref{thm0304} with $n=3$. Furthermore, assume $p\nmid 6m$. 
Then we have
\begin{align*}
\Phi_p(\mathcal{S}_3(a, \tilde{\Delta}^{2m}))(v) 
=
\begin{dcases}
p(2p-1)+p(p-1)\sum_{j=1}^{p-1}v^j
 &\text{if $\left(\frac{-\nu_1 \nu_2}{p}\right)=1$}\\
p+p(p+1)\sum_{j=1}^{p-1}v^j
 &\text{if $\left(\frac{-\nu_1 \nu_2}{p}\right)=-1$}\\
p^2\sum_{j=0}^{p-1} \Bigg(1+\Bigg(\frac{6m\nu_i j}{p}\Bigg)\Bigg) v^j
 &\text{if $p \mid \nu_1\nu_2$ and $p \nmid \nu_i$} \\
p^3
 &\text{if $p \mid \nu_1$ and $p\mid \nu_2$}. \\
\end{dcases}
\end{align*}
\end{theorem}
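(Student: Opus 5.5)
The plan is to start from the explicit multiset in Theorem \ref{thm6-2} and count the solutions of a binary quadratic form over $\mathbb{Z}/p\mathbb{Z}$. By Theorem \ref{thm6-2} with $n=3$, and in the polynomial form \eqref{eq0304},
\begin{align*}
\Phi_p(\mathcal{S}_3(a,\tilde{\Delta}^{2m}))(v)=p\sum_{(x_1,x_2)\in(\mathbb{Z}/p\mathbb{Z})^2} v^{\alpha x_1^2+\beta x_2^2}=p\sum_{c=0}^{p-1}N(c)\,v^c,
\end{align*}
where $\alpha=6m\nu_1$, $\beta=6m\nu_2$ and $N(c)=\#\{(x_1,x_2)\mid \alpha x_1^2+\beta x_2^2=c\}$. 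Since $p\nmid 6m$, the conditions $p\mid\alpha$ and $p\mid\beta$ are the same as $p\mid\nu_1$ and $p\mid\nu_2$. Also $\big(\frac{-\alpha\beta}{p}\big)=\big(\frac{-36m^2\nu_1\nu_2}{p}\big)=\big(\frac{-\nu_1\nu_2}{p}\big)$. So it suffices to compute $N(c)$ in the four cases.

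\emph{Degenerate cases.} If $p\mid\nu_1$ and $p\mid\nu_2$, every exponent is $0$, so the invariant is $p\cdot p^2=p^3$. If exactly one of them, say $\nu_i$, is prime to $p$, the other variable is free and contributes a factor $p$. The number of $x$ with $\alpha_i x^2=c$ is $1+\big(\frac{c\alpha_i^{-1}}{p}\big)=1+\big(\frac{6m\nu_i c}{p}\big)$, since $\alpha_i$ and $\alpha_i^{-1}$ have the same Legendre symbol. This gives $N(c)=p\big(1+\big(\frac{6m\nu_ic}{p}\big)\big)$, which is the third line.

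\emph{Nondegenerate case.} When $p\nmid\nu_1\nu_2$, I would compute $N(c)$ by orthogonality of characters:
\begin{align*}
N(c)=\frac{1}{p}\sum_{t\in\mathbb{Z}/p\mathbb{Z}}\zeta_p^{-tc}\,G(t\alpha,p)\,G(t\beta,p).
\end{align*}
The term $t=0$ contributes $p^2/p=p$. For $t\neq0$, Proposition \ref{prop:gauss sum} gives
\begin{align*}
G(t\alpha,p)G(t\beta,p)=\Big(\frac{t^2\alpha\beta}{p}\Big)\varepsilon_p^2\,p=\Big(\frac{-\alpha\beta}{p}\Big)p,
\end{align*}
using $\varepsilon_p^2=\big(\frac{-1}{p}\big)$, which follows directly from the definition of $\varepsilon_p$. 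Summing over $t\neq 0$ with $\sum_{t\neq0}\zeta_p^{-tc}=p-1$ if $c=0$ and $-1$ otherwise, I get
\begin{align*}
N(0)=p+(p-1)\Big(\frac{-\nu_1\nu_2}{p}\Big),\qquad N(c)=p-\Big(\frac{-\nu_1\nu_2}{p}\Big)\quad(c\neq0).
\end{align*}
Multiplying by $p$ gives $p(2p-1)$ and $p(p-1)$ when the symbol is $1$, and $p$ and $p(p+1)$ when it is $-1$, matching the first two lines.

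The only delicate point is the sign bookkeeping in the product of the two Gauss sums: using $\varepsilon_p^2=\big(\frac{-1}{p}\big)$ and seeing that $36m^2$ is a nonzero square mod $p$. As a cross-check, one can evaluate the result at $v=1$ to get $p^3=\#\mathrm{Col}_p$, which holds by Corollary \ref{cor1222a} since $\mathrm{rank}(A_a-I)=0$ here. One can also evaluate at $v=\zeta_p$ and compare with Theorem \ref{thm0304}. Equivalently, the whole result could be obtained by applying the inverse of $\iota$ in \eqref{eqn:inverse of iota} to the pair $(p^3,\tilde{\Phi}_p)$.
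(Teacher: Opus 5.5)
Your proof is correct. In the degenerate cases it matches the paper: both read the answer directly off Theorem \ref{thm6-2}, and you simply make the count $1+\big(\frac{c\alpha_i^{-1}}{p}\big)$ explicit.

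In the nondegenerate case $p\nmid\nu_1\nu_2$ the two proofs recover the coefficients in different ways.
\begin{itemize}
\item The paper evaluates only at $v=\zeta_p$. It quotes Theorem \ref{thm0304} to get $\tilde{\Phi}_p=p^2\big(\frac{-\nu_1\nu_2}{p}\big)$. It then rebuilds the whole polynomial from the pair $(p^3,\tilde{\Phi}_p)$ using the inverse of $\iota$ in \eqref{eqn:inverse of iota}.
\item You compute each coefficient $N(c)$ by discrete Fourier inversion. This uses the products $G(t\alpha,p)G(t\beta,p)$ for every $t\neq 0$, all equal to $\big(\frac{-\alpha\beta}{p}\big)p$ because $t^2$ is a square.
\end{itemize}

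Your version is self-contained: it is the classical count of representations of $c$ by the binary form $\alpha x_1^2+\beta x_2^2$. It needs neither the description of $\mathrm{image}(\iota)$ nor the value $\#\mathrm{Col}_p=p^3$ as an input, so that value becomes a check rather than an ingredient. The paper's version is shorter once Theorem \ref{thm0304} is available. It also illustrates the paper's point that the single number $\tilde{\Phi}_p$ determines $\Phi_p$.

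The two arguments are two packagings of the same splitting of $\mathbb{Z}[v]/(v^p-1)$ into its values at $v=1$ and at the $\zeta_p^t$. For an integer-coefficient polynomial whose value at $\zeta_p$ is rational, all the values at $\zeta_p^t$, $t\neq 0$, coincide. That is exactly why your $t\neq 0$ terms are all equal, and it is the content of your closing remark about $\iota^{-1}$.
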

\begin{proof}
The cases where $p \mid \nu_1\nu_2$ follow directly from Theorem \ref{thm6-2} without using Theorem \ref{thm0304}. 

We consider the case $p \nmid \nu_1\nu_2$. We then have $\#J=2$. 
Furthermore, notice that $\varepsilon_p^2 = \Big(\frac{-1}{p}\Big)$ (see \cite[p.77]{Lang}). 
Hence by Theorem \ref{thm0304}, we find
\begin{align*}
\tilde{\Phi}_p(\mathcal{S}_3(a,\tilde{\Delta}^{2m}))
=
p^{2}
\Big(\frac{-\nu_1\nu_2}{p}\Big) \in \mathbb{Z}. 
\end{align*}
Now, note that by Theorem \ref{thm6-2}, we have $\#\mathrm{Col}_p(\mathcal{S}_3(a,\tilde{\Delta}^{2m}))=p^3$. 
Therefore, by \eqref{eqn:inverse of iota}, we obtain
\begin{align*}
\Phi_p(\mathcal{S}_3(a, \tilde{\Delta}^{2m})) (v)
=
p^{2}\Big(\frac{-\nu_1\nu_2}{p}\Big) + \frac{p^3-p^{2}\Big(\frac{-\nu_1\nu_2}{p}\Big)}{p} \sum_{j=0}^{p-1}v^j. 
\end{align*}
Thus we get the desired formula by setting $\Big(\frac{-\nu_1\nu_2}{p}\Big)=\pm 1$. 
\end{proof}

\subsection{Tri-colorings and the associated quandle cocycle invariant}\label{sec6-2}

For tri-colorings also we use the presentation of the quandle cocycle invariant of a surface-link $F$ in Subsection \ref{0306-new-sec6-2} (\ref{eq0304}).

\begin{proposition}\label{a-prop6-5}
For any 3-braid $a$, and any integer $m$, 
\[
\Phi_3(\mathcal{S}_3(a, \tilde{\Delta}^{m}))=\begin{cases} 3 & \text{if $m$ is odd} \\
\# \mathrm{Col}_3(\hat{a}) & \text{if $m$ is even.}
\end{cases} 
\]
\end{proposition}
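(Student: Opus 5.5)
We split according to the parity of $m$. Throughout, $\Phi_3$ is written as $\sum_{C} v^{\Phi_3(F;C)}$ in $\mathbb{Z}[v,v^{-1}]/(v^3-1)$, so the statement says that every weight $\Phi_3(F;C)$ vanishes and that the number of colorings is $3$ or $\#\mathrm{Col}_3(\hat{a})$.

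\emph{Case $m$ even.} Write $m=2m'$. By Proposition \ref{claim-1} (with $n=3$ odd), $A_{\tilde{\Delta}^2}\equiv I \pmod 3$, so $(A_{\tilde{\Delta}})^m\equiv I \pmod 3$. Theorem \ref{thm6-1} then gives
\[
\Phi_3(\mathcal{S}_3(a,\tilde{\Delta}^m))=\{-3m\Psi_3^*(\hat{a};C,0)\mid C\in \mathrm{Col}_3(\hat{a})\}.
\]
Every element is $-3m\Psi^*_3\equiv 0$ in $\mathbb{Z}/3\mathbb{Z}$, because the coefficient $3m$ kills everything modulo $3$. Equivalently, $\mathcal{S}_3(a,\tilde{\Delta}^{2m'})=\mathcal{S}_3(a,\tilde{\Delta}^{2pm''})$-type vanishing as in Proposition \ref{prop2026-6-3}(2), but the direct argument suffices. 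Hence $\Phi_3=\#\mathrm{Col}_3(\hat{a})\cdot v^0=\#\mathrm{Col}_3(\hat{a})$.

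\emph{Case $m$ odd.} We show that $\mathcal{S}_3(a,\tilde{\Delta}^m)$ admits only trivial tri-colorings. Then there are exactly $3$ colorings, each with weight $0$: a trivial coloring gives $\theta_3(s,s,s)=0$ at every triple point, by the cocycle condition $f(s,s,t)=0$. This yields $\Phi_3=3$. By Proposition \ref{newprop5-1}, the colorings correspond to $\ker(A_a-I)\cap\ker(A_{\tilde{\Delta}^m}-I)$ modulo $3$, so it suffices to show that $\ker(A_{\tilde{\Delta}^m}-I \bmod 3)$ consists of the constant vectors only. Since $A_{\tilde{\Delta}^2}\equiv I$, we have $A_{\tilde{\Delta}^m}\equiv A_{\tilde{\Delta}}$ for $m$ odd. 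Moreover $\tilde{\Delta}=(\sigma_1\sigma_2)^3$, so by Proposition \ref{prop5-2-28} with $n=3\not\equiv 0,2,4 \pmod 6$, the matrix $A_{\tilde{\Delta}}-I$ has rank two modulo $3$. Indeed, the proof there gives $A^3-I$ reducing to a matrix with rows $(2,-2,0)$ and $(0,2,-2)$, which has rank $2$ modulo $3$. Its kernel is therefore one-dimensional, and it contains the constant vectors $(x,x,x)^T$, since the trivial coloring always exists. The kernel thus consists exactly of the trivial colorings, and the intersection with $\ker(A_a-I)$ is again these $3$ vectors.

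The only point needing care is the bookkeeping that the weight of every trivially colored triple point is zero, together with checking the rank claim modulo $3$ for $A^3-I$. Both are immediate from the definitions and the explicit matrix above, so no real obstacle is expected.
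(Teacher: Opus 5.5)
Your proof is correct and is essentially the paper's argument. For even $m$ you combine Proposition \ref{claim-1} with Theorem \ref{thm6-1}, where the factor $3m$ kills every weight mod $3$. For odd $m$ you use Proposition \ref{prop5-2-28} to see that $A_{\tilde{\Delta}^m}-I$ has rank two mod $3$, so only the three trivial tri-colorings survive.

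The differences are cosmetic: you reduce $\tilde{\Delta}^m$ to $\tilde{\Delta}$ first, and you spell out that trivial colorings have zero weight. Your parenthetical appeal to Proposition \ref{prop2026-6-3}(2) is unnecessary, and it only literally applies when $3 \mid m'$.
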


\begin{proof}
Theorem \ref{thm6-1} and Proposition \ref{claim-1} imply the case when $m$ is even. 
Assume that $m$ is odd. By Proposition \ref{prop5-2-28}, 
we see that the number of tri-colorings for the closure of $\tilde{\Delta}^{m}=(\sigma_1\sigma_2)^{3m}$ is $3$: thus $\mathcal{S}_3(a, \tilde{\Delta}^{m})$ for any 3-braid $a$ admits only trivial tri-colorings. 
Hence the quandle cocycle invariant $\Phi_3(\mathcal{S}_3(a,\tilde{\Delta}^{m}))$ is $3$ for any 3-braid $a$ and any odd integer $m$. 
\end{proof}

\begin{proposition}\label{prop6-5}
Let $(a, b)$ be 3-braids which commute. Then $a=c^{l_1} \tilde{\Delta}^{m_1}$ and $b=c^{l_2} \tilde{\Delta}^{m_2}$ for some 3-braid $c$ and some integers $l_1, l_2, m_1, m_2$. 
\end{proposition}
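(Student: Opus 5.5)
The plan is to pass to the quotient of $B_3$ by its centre and use the free-product structure of that quotient. The centre of $B_3$ is $Z=\langle\tilde{\Delta}\rangle$, and $B_3/Z\cong PSL(2,\mathbb{Z})\cong \mathbb{Z}/2*\mathbb{Z}/3$. The images of $\sigma_1\sigma_2\sigma_1$ and $\sigma_1\sigma_2$ generate the two free factors, since $(\sigma_1\sigma_2\sigma_1)^2=(\sigma_1\sigma_2)^3=\tilde{\Delta}$. Let $\pi\colon B_3\to B_3/Z$ be the projection. If $ab=ba$ in $B_3$, then $\pi(a)$ and $\pi(b)$ commute in the free product $\mathbb{Z}/2*\mathbb{Z}/3$. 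The goal is therefore to show that commuting elements there are powers of a common element.

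\textbf{Step 1: centralizers in the free product.} I would invoke the standard description of centralizers in a free product $G=A*B$.
\begin{itemize}
\item If $g\neq 1$ is conjugate into a factor, say $g=hxh^{-1}$ with $x\in A$, then its centralizer is $hC_A(x)h^{-1}$. Since $A$ is cyclic, this equals $hAh^{-1}$, which is cyclic.
\item Otherwise $g$ has infinite order, and its centralizer is infinite cyclic.
\end{itemize}
This follows from the Kurosh subgroup theorem, or directly from normal forms: an abelian subgroup of a free product is either conjugate into a factor or infinite cyclic. In every case the centralizer of a nontrivial element of $\mathbb{Z}/2*\mathbb{Z}/3$ is cyclic. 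So if $\pi(a)\neq 1$, then $\pi(a)$ and $\pi(b)$ both lie in a cyclic subgroup $\langle \gamma\rangle$. If $\pi(a)=1$, take $\gamma=\pi(b)$ (or $\gamma=1$ if $\pi(b)=1$ as well). In all cases there are integers $l_1,l_2$ with $\pi(a)=\gamma^{l_1}$ and $\pi(b)=\gamma^{l_2}$.

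\textbf{Step 2: lifting.} Choose any lift $c\in B_3$ of $\gamma$. Then $a c^{-l_1}$ and $b c^{-l_2}$ lie in $\ker\pi=Z$, so $a=c^{l_1}\tilde{\Delta}^{m_1}$ and $b=c^{l_2}\tilde{\Delta}^{m_2}$ for some integers $m_1,m_2$. Here we use that $\tilde{\Delta}$ is central, so the order of the factors does not matter.

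The main obstacle is Step 1, namely justifying that abelian subgroups of $\mathbb{Z}/2*\mathbb{Z}/3$ are cyclic. If a self-contained argument is preferred, I would use cyclically reduced normal forms.
\begin{itemize}
\item Conjugate so that $g$ is cyclically reduced.
\item If $g$ has syllable length $\ge 2$, then any $h$ commuting with $g$ has a normal form that is forced to be a power of the primitive root of $g$. This is the usual argument that $gh=hg$ implies both are powers of a common element.
\item If $g$ lies in a factor, a normal-form comparison shows $h$ lies in the same factor.
\end{itemize}
This argument is routine but fiddly.

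Alternatively, one can cite the known fact that $PSL(2,\mathbb{Z})$ acts on its Bass–Serre tree with finite cyclic vertex stabilizers and trivial edge stabilizers. An abelian subgroup then either fixes a vertex, in which case it is finite cyclic, or acts on an axis by translations, in which case it is infinite cyclic. Reflections along the axis are ruled out because edge stabilizers are trivial.
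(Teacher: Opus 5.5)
Your proposal is correct and follows essentially the same route as the paper: quotient by the center $\langle\tilde{\Delta}\rangle$, use that commuting elements of $\mathbb{Z}/2\mathbb{Z}*\mathbb{Z}/3\mathbb{Z}$ are powers of a common element, and lift back to $B_3$. You also justify the centralizer fact that the paper takes for granted, and you correctly take $c$ to be a lift of $\gamma$, where the paper loosely writes $c_1 \in G/Z(G)$.
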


\begin{proof}
The 3-braid group $B_3=\langle \sigma_1, \sigma_2 \mid \sigma_1 \sigma_2 \sigma_1=\sigma_2 \sigma_1 \sigma_2\rangle$ is isomorphic to the knot group of a trefoil and it has another presentation $G=\langle x,y \mid x^2=y^3 \rangle$. 
Since the center $Z(G)$ of $G$ is an infinite cyclic group generated by $z=x^2=y^3$, and 
$G/Z(G) \cong \mathbb{Z}/2\mathbb{Z} *\mathbb{Z}/3\mathbb{Z}$, 
we see that any pair $(a_1, b_1)$ of elements of $G$ satisfying $a_1b_1=b_1a_1$ is written as $a_1=c_1^{l_1} z^{m_1}$ and $b_1=c_1^{l_2} z^{m_2}$ for some $c_1 \in G/Z(G)$ and some integers $l_1, l_2, m_1, m_2$. Since the center of $B_3$ is an infinite cyclic group generated by $\tilde{\Delta}$, $\tilde{\Delta}$ corresponds to $z$ or $z^{-1}$ in $G$. Hence, interpreting $a_1$ and $b_1$ as elements in the 3-braid group $B_3$, we have the required result. 
\end{proof}

For a 3-braid $a$ 
with a presentation $a=\sigma_{1}^{n_1 } \sigma_{2}^{n_2}\sigma_{1}^{n_3 } \cdots \sigma_2^{n_k}$ $(n_1, \ldots, n_k \in \mathbb{Z})$, we consider the set $[a]$ of 3-braids associated with $3$, which is given by  
\[
[a]=\{ \sigma_{1}^{n_1'} \sigma_{2}^{n_2'}\sigma_{1}^{n_3'} \cdots \sigma_2^{n_k'} \mid n_i' \equiv n_i \ \mathrm{mod}\ {3} \ (i=1,2,\ldots, k)\}. 
\]

\begin{definition}\label{def6-6}
We say that two torus-covering $T^2$-links of degree 3 
are {\it qdl-equivalent} if they are related by $\sim$ and $\sim_{qdl}$, where $\sim$ is the equivalence relation as surface-links in $\mathbb{R}^4$ which include (\ref{E0})--(\ref{eq1121-3}) in Theorem \ref{thm3-1}, and $\sim_{qdl}$ is given as follows.  Let $m$ be any integer. 
\begin{align}
& \mathcal{S}_3(a,b) \sim_{qdl} \mathcal{S}_3(a, ab), \tag{Q1} \label{Q1}\\
&
\mathcal{S}_3(a\tilde{\Delta}^{\pm2},\tilde{\Delta}^{m})\sim_{qdl} \mathcal{S}_3(a, \tilde{\Delta}^{m}),
\tag{Q2}\label{Q2}
\\
&
\mathcal{S}_3(a,\tilde{\Delta}^{m})\sim_{qdl} \mathcal{S}_3(a, \tilde{\Delta}^{m\pm 2}), \tag{Q3}\label{Q3}\\
&
\mathcal{S}_3(a,\tilde{\Delta}^{m})\sim_{qdl} \mathcal{S}_3(a_1, \tilde{\Delta}^{m}) \text{ for any $a_1 \in [a]$}. \tag{Q4}\label{Q4}
\end{align}

\end{definition}

\begin{theorem}\label{thm6-6}
Let $(a,b)$ be 3-braids which commute. We denote by $\mathcal{C}(a,b)$ the qdl-equivalence class of $\mathcal{S}_3(a,b)$. 
Then, for any $F \in \mathcal{C}(a,b)$, the quandle cocycle invariant $\Phi_3(F)$ has the same value. 
\end{theorem}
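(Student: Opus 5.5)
The plan is to check that each generating relation of qdl-equivalence preserves $\Phi_3$. The qdl-equivalence relation is generated by $\sim$ (surface-link equivalence) together with the four moves (\ref{Q1})--(\ref{Q4}). For $\sim$ there is nothing to prove, since $\Phi_3$ is an invariant of surface-links. It therefore suffices to show that each of (\ref{Q1})--(\ref{Q4}) leaves $\Phi_3$ unchanged, viewed as an element of $\mathbb{Z}[v,v^{-1}]/(v^3-1)$.

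\textbf{Move (\ref{Q1}).} I would use (\ref{eq1121-2}) to swap the roles of the basis braids. Then $\mathcal{S}_3(a,b)\sim\mathcal{S}_3(b^{-1},a)$ and $\mathcal{S}_3(a,ab)\sim\mathcal{S}_3((ab)^{-1},a)$. This reduces the claim to comparing $\mathcal{S}_3(a,b)$ and $\mathcal{S}_3(ab,b)$, which is a "twisting along the other curve" move.

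The cleanest route, I think, is to view $\mathcal{S}_3(a,ab)$ as the image of $\mathcal{S}_3(a,b)$ under a self-homeomorphism of $N(T)=D^2\times T$. This homeomorphism is induced by a Dehn twist of $T$ along the meridian $\mathbf m$, with $(\mathbf m,\mathbf l)\mapsto(\mathbf m,\mathbf m+\mathbf l)$. It does not extend to $\mathbb{R}^4$ in general, which is why (\ref{Q1}) is not already a surface-link equivalence; the exception is the even-power case, which gives (\ref{eq1121-3}).

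Instead, I would argue through colorings and the triple-point weights directly:
\begin{itemize}
\item By Proposition \ref{newprop5-1}, colorings are identified with the common fixed vectors of $A_a$ and $A_b$ mod $3$. Since $A_{ab}=A_bA_a$, the sets $\ker(A_a-I)\cap\ker(A_b-I)$ and $\ker(A_a-I)\cap\ker(A_{ab}-I)$ coincide, so there is a canonical bijection of colorings.
\item The triple points of the chart diagram of $\mathcal{S}_3(a,ab)$ are those of $\mathcal{S}_3(a,b)$ plus those created by an extra copy of the $a$-chart swept along $\mathbf l$. The added contribution is essentially the $\theta_3$-weight of the torus-covering link $\mathcal{S}_3(a,a)$, computed relative to the given coloring.
\item I would argue that this extra contribution vanishes: for $\mathcal{S}_3(a,a)$, sliding the longitude chart shows that it is equivalent to $\mathcal{S}_3(a,e)$, which has no triple points.
\end{itemize}
This is the step I expect to be the main obstacle. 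The bookkeeping of weights under the Dehn twist must be made rigorous, presumably through the chart description of \cite{N}, since the paper has no ready-made lemma for it.

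\textbf{Moves (\ref{Q2})--(\ref{Q4}).} Here Theorem \ref{thm6-1} does the work. For $m$ odd, Proposition \ref{a-prop6-5} gives $\Phi_3=3$ on both sides of each move, because $\tilde\Delta^{m}$ and $\tilde\Delta^{m\pm2}$ are both odd powers and $a$, $a\tilde\Delta^{\pm2}$, $a_1$ are arbitrary. For $m$ even, Proposition \ref{claim-1} gives $A_{\tilde\Delta^m}\equiv I$ mod $3$. Theorem \ref{thm6-1} then gives
\[
\Phi_3(\mathcal{S}_3(a,\tilde\Delta^m))=\{-3m\Psi_3^*(\hat a;C,0)\}.
\]
Since the coefficients lie in $\mathbb{Z}/3\mathbb{Z}$, every value is $0$, and so $\Phi_3(\mathcal{S}_3(a,\tilde\Delta^m))=\#\mathrm{Col}_3(\hat a)$. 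This quantity is independent of $m$, which settles (\ref{Q3}). For (\ref{Q2}), it is unchanged under $a\mapsto a\tilde\Delta^{\pm2}$ because $A_{\tilde\Delta^{\pm 2}}\equiv I$. For (\ref{Q4}), it is unchanged under $a\mapsto a_1\in[a]$ by Proposition \ref{prop2026-6-3}(1), applied with $p=3$.

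\textbf{Conclusion.} Combining the four moves with the invariance of $\Phi_3$ under $\sim$ shows that $\Phi_3$ is constant on each class $\mathcal{C}(a,b)$.
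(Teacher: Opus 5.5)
Your treatment of (\ref{Q2})--(\ref{Q4}) is correct and is essentially the paper's argument, via Proposition \ref{a-prop6-5}, Theorem \ref{thm6-1}, Proposition \ref{claim-1} and Proposition \ref{prop2026-6-3}(1). Your bijection of colorings for (\ref{Q1}) is also fine.

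\textbf{The gap.} It lies in the one nontrivial step of (\ref{Q1}). You justify the vanishing of the extra contribution by asserting $\mathcal{S}_3(a,a)\sim\mathcal{S}_3(a,e)$ ``by sliding the longitude chart''. But this is exactly (\ref{Q1}) with $b=e$. It is the Dehn twist along $\mathbf{m}$ that you yourself said does not in general extend to $\mathbb{R}^4$, so the argument is circular at the point where it matters. Nor can chart moves on $T$ produce this equivalence. Chart moves do not change the braid monodromy along $\mathbf{l}$ up to conjugation, and that monodromy is $a$ on one side and $e$ on the other. You yourself flag this bookkeeping as unproven, so as written (\ref{Q1}) is not established. (Minor: your preliminary reduction via (\ref{eq1121-2}) actually leads to $(a',b')\mapsto(a'b'^{-1},b')$, not to $(ab,b)$, but you never use it.)

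\textbf{The fix.} The missing observation is elementary and makes the Dehn-twist discussion unnecessary.
\begin{enumerate}
\item The square region of a chart for $\mathcal{S}_3(a,ab)$ has to realize $a\cdot(ab)\to(ab)\cdot a=a\cdot(ba)$.
\item You can take the chart of $\mathcal{S}_3(a,b)$ realizing $ab\to ba$ and prefix the letters of $a$ as parallel edges. Equivalently, your $aa\to aa$ sub-square can be filled by a chart with no black vertices at all, since the word never changes. In particular $\mathcal{S}_3(a,a)$ itself has a diagram without triple points; this is what your argument actually needs, and it requires no comparison with $\mathcal{S}_3(a,e)$.
\item For a coloring with initial colors $x$ you have $A_a x=x$. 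So the colors entering the $ab\to ba$ part are the same as for $\mathcal{S}_3(a,b)$.
\item Hence the triple points of the two diagrams correspond one-to-one with identical weights under your bijection of colorings.
\end{enumerate}
This is precisely the paper's justification that the two links ``have diagrams with the same set of weights of triple points.''
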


\begin{proof}
It suffices to show that the quandle cocycle invariants are the same for the torus-covering $T^2$-links given in (\ref{Q1})--(\ref{Q4}). The case (\ref{Q3}) follows from Proposition \ref{a-prop6-5}. The case (\ref{Q4}) follows from  Propositions \ref{prop2026-6-3} and \ref{a-prop6-5}. 
Since $A_{\tilde{\Delta}^{\pm 2}} \equiv I \pmod{3}$ by Propositions \ref{claim-1} or \ref{prop5-2-28}, $\# \mathrm{Col}_3((a \tilde{\Delta}^{\pm 2})^{\wedge})=\# \mathrm{Col}_3(\hat{a})$; thus (\ref{Q2}) follows from Proposition \ref{a-prop6-5}. 
The quandle cocycle invariant of $\mathcal{S}_n(a,b)$ is computed by seeing the weights of triple points which appear when we transform the braid presentation $ab$ to $ba$ \cite{N, N5}. For the case (\ref{Q1}), the related torus-covering $T^2$-links have diagrams with the same set of weights of triple points; so their quandle cocycle invariants coincide. 
\end{proof}

\begin{theorem}\label{thm6-9}
Any $\mathcal{S}_3(a,b)$ is qdl-equivalent to one of the following: 
\begin{enumerate}
\item[$(1)$]
$ \mathcal{S}_3(c^{\pm 1}, e)$, 
\item[$(2)$]
$\mathcal{S}_3(c^{\pm 1}, \tilde{\Delta})$, 
\end{enumerate}
where $c$ is one of the following 3-braids: 
\[
e,\ \sigma_1, \ 
\sigma_1\sigma_2, \ 
\sigma_1 \sigma_2^{-1}, \ (\sigma_1 \sigma_2^{-1})^2. 
\]
\end{theorem}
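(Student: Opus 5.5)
Start from Proposition \ref{prop6-5}: write $a=c^{l_1}\tilde{\Delta}^{m_1}$ and $b=c^{l_2}\tilde{\Delta}^{m_2}$. The first step is to reduce the pair $(l_1,l_2)$ using (\ref{eq1121-2}), (\ref{eq1121-3}) and (\ref{Q1}). Since the $\tilde{\Delta}$-factors are central, these moves act on the exponent vector $(l_1,l_2)$ by elementary column operations: $(a,b)\mapsto(b^{-1},a)$, $(a,b)\mapsto(a,ab)$, and then $(a,b)\mapsto(a,a^{k}b)$ by iteration and inversion using (\ref{E0}). Running the Euclidean algorithm on $(l_1,l_2)$ gives a pair of the form $(c^{g}\tilde{\Delta}^{m_1'},\tilde{\Delta}^{m_2'})$ with $g=\gcd(l_1,l_2)$, possibly up to sign. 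Set $a'=c^{g}\tilde{\Delta}^{m_1'}$, so we now have $\mathcal{S}_3(a',\tilde{\Delta}^{m})$. By (\ref{Q3}) we may take $m\in\{0,1\}$, and by (\ref{Q2}) we may remove $\tilde{\Delta}^{\pm2}$ factors from $a'$. The result is $\mathcal{S}_3(a'',\tilde{\Delta}^{\epsilon})$ with $\epsilon\in\{0,1\}$, where $a''$ is an arbitrary 3-braid, possibly still containing one factor of $\tilde{\Delta}$.

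The second step is to normalise $a''$ up to conjugation (\ref{eq1121-1}), inversion, and the move (\ref{Q4}) of reducing exponents of $\sigma_1,\sigma_2$ mod $3$. Work in the quotient of $B_3$ by the normal closure of $\sigma_1^3$. Since $\sigma_1$ and $\sigma_2$ are conjugate, this quotient is $B_3/\langle\langle\sigma_1^3\rangle\rangle\cong \mathrm{SL}(2,\mathbb{Z}/3)$, a binary tetrahedral group of order $24$ whose center is generated by the image of $\tilde{\Delta}$; here $\tilde{\Delta}$ has order $2$, matching (\ref{Q2}). I would check that the moves (\ref{Q4}), together with conjugation, $\tilde{\Delta}^{\pm 2}$ removal and braid relations, realise passage to this quotient. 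The braid relation preserves the word form $\sigma_1^{n_1}\sigma_2^{n_2}\cdots$ and (\ref{Q4}) changes exponents mod $3$, so any word in the normal closure can be inserted. Some care is needed, because (\ref{Q4}) is stated only for a fixed syllable pattern; one may need to insert $\sigma_i^{3}$ at arbitrary positions by splitting syllables as $\sigma_1^{n}=\sigma_1^{n}\sigma_2^{0}\sigma_1^{0}$.

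Granting this, the classification reduces to conjugacy classes in $\mathrm{SL}(2,3)$ modulo inversion and modulo multiplication by the central element when $b=e$ or $\tilde{\Delta}^{\pm1}$. In the latter case, multiplication by the central element should be absorbed by some move; if not, both $\tilde{\Delta}$-versions appear, and the list is organised with $c^{\pm1}$ and an explicit second component. $\mathrm{SL}(2,3)$ has seven conjugacy classes, with element orders $1,2,4,3,3,6,6$. Representatives are $e$, $\tilde{\Delta}$ (order $2$), $\sigma_1\sigma_2^{-1}$ of order $4$ (consistent with Proposition \ref{prop5-2}), its square $(\sigma_1\sigma_2^{-1})^2=\tilde{\Delta}$-image, $\sigma_1$ and $\sigma_1^{-1}$ of order $3$, and $\sigma_1\sigma_2$ and its inverse of order $6$ (consistent with Proposition \ref{prop5-2-28}). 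Multiplying by $\tilde{\Delta}$ swaps the classes of order $3$ and order $6$ and swaps $e$ with the central element. The remaining central factor $\tilde{\Delta}^{\pm1}$ in $a''$ is handled by this identification, and the listed $c$'s are exactly the representatives: $e$, $\sigma_1$, $\sigma_1\sigma_2$, $\sigma_1\sigma_2^{-1}$, and $(\sigma_1\sigma_2^{-1})^2$, the last representing the central class.

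The main obstacle is the second step: rigorously justifying that (\ref{Q2}), (\ref{Q4}) and conjugation generate the full quotient to $\mathrm{SL}(2,3)$. This requires showing that $\langle\langle\sigma_1^3\rangle\rangle$ together with $\tilde{\Delta}^{2}$ is exactly the kernel, so that $B_3/\langle\langle \sigma_1^3\rangle\rangle$ has order $24$; the classical result (Coxeter) gives this. It also requires that every element of the kernel can be removed using (\ref{Q4})-type exponent changes. The cleanest route is to note that a word differing by insertion of a conjugate $w\sigma_i^{3}w^{-1}$ is obtained by writing $w\sigma_i^{0}w^{-1}$ in syllable form and changing one exponent by $3$. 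Since (\ref{Q2}) and (\ref{Q4}) apply only when $b$ is a power of $\tilde{\Delta}$, the first step must be completed before the second.
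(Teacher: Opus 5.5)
Your proof is correct. Its first half follows the paper: Proposition \ref{prop6-5}, then (\ref{eq1121-2}) and (\ref{Q1}) to run Euclid on the exponents of $c$, then (\ref{Q3}) and (\ref{Q2}). The paper compresses this into one line, and your observation that (\ref{Q2})--(\ref{Q4}) only apply once $b$ is a power of $\tilde{\Delta}$ is the right ordering.

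The second half takes a genuinely different route. The paper works at the level of words:
\begin{itemize}
\item it uses (\ref{Q4}) to make every exponent $\pm1$ and cyclically reduces by (\ref{eq1121-1});
\item it removes adjacent same-sign letters with $\sigma_i\sigma_j=\sigma_j\sigma_i\sigma_j\sigma_i^{-1}$ plus (\ref{Q4}), a length reduction it only illustrates by an example;
\item it bounds the remaining alternating words with the explicit identity $(\sigma_1\sigma_2^{-1})^3\sim_{qdl}(\sigma_1\sigma_2^{-1})^{-1}$.
\end{itemize}

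You instead note that, with $b$ central, (\ref{Q4}) and conjugation identify two braids exactly when their images in $B_3/\langle\langle\sigma_1^3\rangle\rangle\cong\mathrm{SL}(2,\mathbb{Z}/3)$ are conjugate. Writing $w\sigma_i^{0}w^{-1}$ in syllable form and shifting that exponent by $3$ is the right justification. You then read the answer off the seven conjugacy classes.

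Your approach needs one external input: Coxeter's count $|B_3/\langle\langle\sigma_1^3\rangle\rangle|=24$, i.e.\ that the kernel of $B_3\to\mathrm{SL}(2,\mathbb{Z}/3)$ is exactly $\langle\langle\sigma_1^3\rangle\rangle$. In return it avoids any termination argument and explains the structure of the list:
\begin{itemize}
\item $\tilde{\Delta}^2$ already lies in $\langle\langle\sigma_1^3\rangle\rangle$, so (\ref{Q2}) is subsumed by (\ref{Q4});
\item $(\sigma_1\sigma_2^{-1})^2$ and $\tilde{\Delta}$ both map to $-I$, which is why the former can represent the central class;
\item the paper's identity just says that $\sigma_1\sigma_2^{-1}$ has order $4$ in the quotient;
\item under (\ref{Q4}) and conjugation alone there are exactly seven classes of first components.
\end{itemize}

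Finally, your hedging about whether multiplication by the central element (or inversion) is available is unnecessary. The braids $e$, $(\sigma_1\sigma_2^{-1})^{2}$, $(\sigma_1\sigma_2^{-1})^{\pm1}$, $\sigma_1^{\pm1}$, $(\sigma_1\sigma_2)^{\pm1}$ already meet all seven conjugacy classes. So the statement follows for both $b=e$ and $b=\tilde{\Delta}$ without either move.
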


\begin{proof}
By Proposition \ref{prop6-5}, we see that $F \coloneqq \mathcal{S}_3(a,b)$ is qdl-equivalent to 
$\mathcal{S}_3(c_0^{l_1} \tilde{\Delta}^{m_1}, c_0^{l_2} \tilde{\Delta}^{m_2})$ for some 3-braid $c_0$ and some integers $l_1, l_2, m_1, m_2$. 
By the relations (\ref{eq1121-2}) and (\ref{Q1}), $F$ is qdl-equivalent to $\mathcal{S}_3(c_1 \tilde{\Delta}^l, \tilde{\Delta}^m)$, where $c_1$ is a 3-braid and $l, m \in \mathbb{Z}$. 
By (\ref{Q3}), $F$ is qdl-equivalent to $\mathcal{S}_3(c_2, e)$ or $\mathcal{S}_3(c_2, \tilde{\Delta})$ for some 3-braid $c_2$. By (\ref{Q4}), $c_2$ can be replaced by $(\sigma_1^{\epsilon_1}) \sigma_2^{\epsilon_2} \sigma_1^{\epsilon_3} \sigma_2^{\epsilon_4}\cdots$, where $\epsilon_i \in \{+1, -1\}$ for each $i$. Since $d^{-1}\tilde{\Delta} \,d = \tilde{\Delta}$ for any 3-braid $d$, using (\ref{eq1121-1}) if necessary, we can assume that when $c_2$ consists of at most three letters, it is either $e$, $\sigma_1$, $\sigma_2$, $\sigma_1\sigma_2$, $\sigma_1 \sigma_2^{-1}$ or their inverses. 
Further, since $\sigma_1^{-1}\sigma_2\sigma_1=\sigma_2 \sigma_1 \sigma_2^{-1}$, by (\ref{eq1121-1}) we can identify the case $c_2=\sigma_2$ with $c_2=\sigma_1$. 

From now on we consider the case when $c_2$ consists of more than three letters. Using (\ref{eq1121-1}) if necessary, we can assume that $c_2=\sigma_1^{\epsilon_1} \sigma_2^{\epsilon_2} \sigma_1^{\epsilon_3} \sigma_2^{\epsilon_4}\cdots \sigma_2^{\epsilon_{2k}}$, where $k>1$  and $\epsilon_i \in \{+1, -1\}$ $(i=1,\ldots, k)$. 
For 3-braids $d_1$ and $d_2$, we denote $d_1 \sim_{qdl} d_2$ if $[d_1]=[d_2]$. 
If $c_2$ contains a sub-sequence $\sigma_i \sigma_j$ or $\sigma_i^{-1} \sigma_j^{-1}$ $(\{i,j\}=\{1,2\})$, then, by the braid relation $\sigma_i\sigma_j = \sigma_j \sigma_i \sigma_j \sigma_i^{-1}$ and  (\ref{Q4}), $c_2$ is replaced by a word with smaller number of letters. For example, 
\begin{eqnarray*}
\sigma_1 \sigma_2 \sigma_1^{-1} \sigma_2^{\pm1} &=& (\sigma_2 \sigma_1 \sigma_2 \sigma_1^{-1}) \sigma_1^{-1} \sigma_2^{\pm1} \\
&\ \sim_{qdl} \ & \sigma_2 \sigma_1 \sigma_2 \sigma_1 \sigma_2^{\pm1} =\sigma_2 \sigma_2 \sigma_1\sigma_2 \sigma_2^{\pm1} \\
&\sim_{qdl}& \sigma_2^{-1} \sigma_1 \sigma_2 \sigma_2^{\pm1}  \ \sim_{qdl} \ \sigma_2^{-1} \sigma_1 \sigma_2^{-1} \ \text{or}\ \sigma_2^{-1} \sigma_1. 
\end{eqnarray*}
Therefore 
we can assume that 
$\epsilon_1, \epsilon_2,\ldots, \epsilon_{2k}$ have alternating signs. 
We see that 
\begin{eqnarray*}
(\sigma_1 \sigma_2^{-1})^3 &=& \sigma_1 (\sigma_2^{-1} \sigma_1) (\sigma_2^{-1} \sigma_1) \sigma_2^{-1}\\
&=& \sigma_1 (\sigma_1 \sigma_2 \sigma_1^{-1} \sigma_2^{-1})
(\sigma_1 \sigma_2 \sigma_1^{-1} \sigma_2^{-1}) \sigma_2^{-1}
\\
&=& \sigma_1^2 \sigma_2 (\sigma_1^{-1} \sigma_2^{-1}
\sigma_1 \sigma_2) \sigma_1^{-1} \sigma_2^{-2}
\\
&=& \sigma_1^2 \sigma_2 (\sigma_2 \sigma_1^{-1})\sigma_1^{-1} \sigma_2^{-2} 
\\
&=& \sigma_1^2 \sigma_2^2 \sigma_1^{-2} \sigma_2^{-2}\\ 
 &\sim_{qdl}&   \sigma_1^{-1} \sigma_2^{-1} \sigma_1 \sigma_2 \\
 &=&  \sigma_2 \sigma_1^{-1}\\
 &=&(\sigma_1 \sigma_2^{-1})^{-1}. 
 \end{eqnarray*}
So, when $c_2$ consists of letters with alternate signs, $c_2$ is either $\sigma_1\sigma_2^{-1}$ or $(\sigma_1\sigma_2^{-1})^2$ and their inverses. 
\end{proof}

\begin{proof}[Proof of Theorem \ref{thm1-4}]
We use Theorem \ref{thm6-9}. 
For $F=\mathcal{S}_3(a,b)$ of type (2) of Theorem \ref{thm6-9}, 
 Proposition \ref{a-prop6-5} implies $\Phi_3(F)=3$. 

Let $F$ be of type (1) in Theorem \ref{thm6-9}. Note that for $F=\mathcal{S}_3(c^{\pm 1}, e)$, $\Phi_3(F)$ is the number of tri-colorings of the closure of  $c^{\pm 1}$. 
The cases of $\Phi_3(F)=27$ and $\Phi_3(F)=9$ follow from Proposition \ref{newprop5-1}. 
The other cases follow from Propositions \ref{prop5-2-28} and  \ref{prop5-2}. 
The number of tri-colorings can also be obtained by a direct computation of the rank of $A_c-I \bmod{3}$. 
\end{proof}

\section{Other results}\label{sec8}

\begin{theorem}\label{thm8-2}
Let $F$ be a surface-link. If  the quandle cocycle invariant $\Phi_3(F)(v) \in \mathbb{Z}[v,v^{-1}]/(v^3-1)$ does not have an integer value, then $F$ cannot be presented in the form of a torus-covering $T^2$-link of degree equal to or less than three. 
\end{theorem}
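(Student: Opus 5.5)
We argue by contraposition: assuming $F$ is presented as a torus-covering $T^2$-link of degree $n\le 3$, we show that $\Phi_3(F)(v)$ is an integer, i.e.\ a constant polynomial in $\mathbb{Z}[v,v^{-1}]/(v^3-1)$. The proof splits into the case $n=3$ and the cases $n\le 2$.

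For $n=3$, write $F=\mathcal{S}_3(a,b)$ with $ab=ba$. Theorem \ref{thm1-4} already asserts that $\Phi_3(\mathcal{S}_3(a,b))$ is the number of tri-colorings, namely $27$, $9$ or $3$. The argument behind it is worth recalling, since the degree $\le 2$ cases reduce to it. Theorem \ref{thm6-6} makes $\Phi_3$ constant on qdl-equivalence classes. Theorem \ref{thm6-9} reduces to $\mathcal{S}_3(c^{\pm1},e)$ or $\mathcal{S}_3(c^{\pm1},\tilde{\Delta})$. In the first case $b=e$, so no triple points arise when $ab$ is transformed to $ba$, and every coloring has weight $0$. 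In the second case Proposition \ref{a-prop6-5} shows that only trivial colorings occur, and their weights vanish because $\theta_3(s,s,t)=0$. Either way every coloring contributes $v^0$, so $\Phi_3(F)(v)=\#\mathrm{Col}_3(F)\in\mathbb{Z}$.

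For $n\le 2$, the simplest route is that a degree $n$ torus-covering $T^2$-link has a braid chart on $T$ with at most $n-1=1$ type of edge. Such a chart has no triple points, and so does the associated diagram, because triple points in the diagram of $\mathcal{S}_n(a,b)$ come from the relations $\sigma_i\sigma_j\sigma_i=\sigma_j\sigma_i\sigma_j$ with $|i-j|=1$, which require $n\ge 3$. Thus $\Phi_3(F;C)=0$ for every $C$, and $\Phi_3(F)=\#\mathrm{Col}_3(F)$. If this chart argument is not felt to be fully justified, a backup is available: adding a trivial sheet turns degree $n\le 2$ into degree $3$. Taking $\mathcal{S}_3(a\oplus e,b\oplus e)$ adds one unknotted, split torus component. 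Its colorings multiply the count by $3$ and contribute no weights, so integrality is preserved and the degree 3 case applies.

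The main obstacle is this degree $\le 2$ step: making rigorous that no triple point, or at least no nonzero weight, arises. I would handle it by the direct chart observation, citing the construction of diagrams from charts in \cite{N}, \cite{N5}. The remaining degree 3 step is a formal consequence of Theorem \ref{thm1-4}.
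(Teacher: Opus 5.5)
Your proposal is correct and follows the paper's own proof: in degree $\le 2$ the diagram has no triple points, so $\Phi_3(F)=\#\mathrm{Col}_3(F)$ is an integer, and degree $3$ is handled by Theorem \ref{thm1-4}. Your backup (adding a trivial split torus) is also valid, since it multiplies $\Phi_3$ by $3$ and $\mathbb{Z}[v,v^{-1}]/(v^3-1)$ is torsion-free, but it is not needed.
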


\begin{proof}
If a torus-covering $T^2$-link $F$ is of degree less than three, then $F$ has a diagram with no triple points; thus the quandle cocycle invariant $\Phi_3(F)(v)$ is an integer. 
Hence Theorem \ref{thm1-4} implies the required result. 
\end{proof}

We define the {\it torus-covering index} of a torus-covering $T^2$-link $F$, denoted by $\mathrm{tc} (F)$, as the smallest $n$ such that $F$ can be presented as a torus-covering $T^2$-link of degree $n$. 
We remark that for a torus-covering $T^2$-link $F$ with $N$ components, $\mathrm{tc}(F)\geq N$. 

\begin{corollary}
Let $k$ and $m$ be arbitrary integers such that  $k, m \not \equiv 0 \pmod{3}$. Then, the torus-covering $T^2$-link $\mathcal{S}_4(\sigma_1^2 \sigma_2^{3k} \sigma_3^2, \tilde{\Delta}^{m})$ has the torus-covering index $4$. 
\end{corollary}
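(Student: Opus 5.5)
The torus-covering index is at least the number of components, so the plan is to show that $F=\mathcal{S}_4(\sigma_1^2 \sigma_2^{3k} \sigma_3^2, \tilde{\Delta}^{m})$ has four components and then rule out degree $\leq 3$ with Theorem \ref{thm8-2}. First we check that the basis braids commute, which is immediate since $\tilde{\Delta}$ is central. Next we count components. The braid $b=\tilde{\Delta}^m$ is pure. The permutation of $a=\sigma_1^2\sigma_2^{3k}\sigma_3^2$ is that of $\sigma_2^{3k}$, which is a transposition of strands $2,3$ when $k$ is odd and trivial when $k$ is even. In the odd case $\mathcal{S}_4(a,b)$ therefore has only three components, so the remark $\mathrm{tc}(F)\geq N$ gives nothing. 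Hence the main line of argument must be that $\Phi_3(F)(v)$ is not an integer; Theorem \ref{thm8-2} then forces $\mathrm{tc}(F)\geq 4$, and the given presentation gives $\mathrm{tc}(F)\leq 4$.

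So the core step is computing $\Phi_3(F)$ for degree $n=4$ and $p=3$. Theorem \ref{thm6-2} in the even case handles only $\tilde{\Delta}^{pm}$, and here $3\nmid m$, so we cannot use it directly. Instead we use Theorem \ref{thm6-1}, which needs $(A_{\tilde{\Delta}})^m\equiv I \pmod 3$. That fails for $3\nmid m$ in general, since Proposition \ref{claim-1} only gives $A_{\tilde\Delta^3}\equiv I$ for even $n$. The plan is therefore to compute $A_{\tilde{\Delta}} \bmod 3$ explicitly for $n=4$, determine the common fixed space $\ker(A_a-I)\cap\ker(A_{\tilde\Delta^m}-I)$ from Proposition \ref{newprop5-1}, and then compute the cocycle weights directly.

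The weights come from the triple points created when $ab$ is transformed to $ba$, that is, when $\tilde{\Delta}^m$ is passed through $a$, as in \cite{N5}. In the spirit of Theorem \ref{thm6-1}, we expect the total weight for a coloring $C$ to be a multiple of the shadow invariant $\Psi_3^*(\hat a;C,\cdot)$. For the generators $\sigma_1^2,\sigma_3^2$ and $\sigma_2^{3k}$, the only crossings with nonzero Mochizuki weight should be those of $\sigma_2^{3k}$, which behave like $3k$ crossings contributing roughly $k\,x^2$ with $x$ the difference of colors. The expected outcome is that nontrivial colorings which restrict nontrivially to strands $2,3$ give $\Phi_3(F;C)=\pm km\,x^2\not\equiv 0 \pmod 3$. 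Then $\Phi_3(F)(v)$ contains a term $v^{\pm1}$, which a Gauss-sum count as in the proof of Theorem \ref{thm0304} confirms is not cancelled. The hypotheses $k,m\not\equiv0 \pmod 3$ are exactly what keep this coefficient nonzero.

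I expect the main obstacle to be this weight computation in the case where $\tilde{\Delta}^m$ is not $\equiv I$. The ingredients are a determination of which colorings survive, which should include colorings with strands $1,2$ and $3,4$ pairwise related, and a careful account of the triple points. If the direct computation proves unwieldy, the fallback is to use $F\sim\mathcal{S}_4(a,\tilde{\Delta}^{m+3j})$-type reductions. There is no such relation among the stated ones, but (E4) applied with the roles swapped via (E3) could adjust $m$. The aim of that reduction is to reach $m$ even with $\tilde{\Delta}^{m}$ acting trivially mod $3$ on the relevant colorings, and then invoke the $n$-odd-style formula by analogy.
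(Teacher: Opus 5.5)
Your overall route is the paper's: show $\Phi_3(F)(v)\notin\mathbb{Z}$, apply Theorem \ref{thm8-2}, and use the given presentation for $\mathrm{tc}(F)\le 4$. The gap is the step that carries all the content, namely exhibiting a tri-coloring of $F$ whose Mochizuki weight is nonzero mod $3$. You only state it as an expectation, and neither route you propose reaches it.

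\begin{itemize}
\item Theorem \ref{thm6-1} does not apply. For $n=4$ one has $A_{\tilde\Delta^m}\equiv I\pmod 3$ only when $3\mid m$.
\item The fallback is also unavailable. (E1)--(E4) replace the basis pair, up to conjugation, by another generating pair of the same subgroup $\langle a,\tilde\Delta^m\rangle\cong\mathbb{Z}^2$. No nontrivial power of $a$ is central (compare linking numbers), so the central elements of that subgroup are exactly the powers of $\tilde\Delta^m$. Hence $m$ can only change sign.
\item Even if you could reach some $m'$ with $A_{\tilde\Delta^{m'}}\equiv I\pmod 3$, you would have $3\mid m'$, and Theorem \ref{thm6-1} would then make every weight vanish.
\end{itemize}

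The paper closes this step by quoting \cite[Theorem 5.5]{N}, which gives $\Phi_3(\mathcal{S}_4(\sigma_1^2\sigma_2^3\sigma_3^2,\tilde\Delta^m))=3+6v^{2m}$. It then states the analogous calculation $\Phi_3(F)=3+6v^{2km}$.

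If you want to do the computation yourself, the missing observation is which colorings survive. Mod $3$, $A_{\sigma_2^{3k}}\equiv I$, so $\ker(A_a-I)=\{x_1=x_2,\ x_3=x_4\}$. Computing $A_{\tilde\Delta}=(A_{\sigma_1\sigma_2\sigma_3})^4$ gives
$\ker(A_{\tilde\Delta}-I)=\ker(A_{\tilde\Delta^m}-I)=\{x_1-x_2+x_3-x_4=0\}$ for $3\nmid m$. So $F$ has exactly nine tri-colorings, $(x,x,y,y)$, and each is fixed by $A_{\tilde\Delta}$ itself, not merely by $A_{\tilde\Delta^m}$.

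For such a coloring, each of the $m$ copies of $\tilde\Delta$ is commuted past the same colored copy of $a$. So the triple-point count underlying Theorem \ref{thm6-1} can be run coloring by coloring. The crossings of $\sigma_1^2$ and $\sigma_3^2$ join equally colored strands and contribute nothing. The $\sigma_2^{3k}$ crossings should give weight $\pm km(x-y)^2 \pmod 3$, consistent with $3+6v^{2km}$. This per-coloring version of Theorem \ref{thm6-1} is not among the stated results. You must justify it from the triple-point analysis in \cite{N,N5}, or replace it by the citation. As written, your argument is incomplete exactly here.

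Two minor remarks:
\begin{itemize}
\item No Gauss-sum argument is needed to rule out cancellation. The coefficients of $\Phi_3(F)(v)$ count colorings, so a single coloring of nonzero weight already gives $\Phi_3(F)(v)\notin\mathbb{Z}$.
\item Your component count already settles even $k$: then $F$ has four components, so $\mathrm{tc}(F)\ge 4$. The cocycle argument is needed only for odd $k$.
\end{itemize}
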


\begin{proof}
Put $F_k=\mathcal{S}_4(\sigma_1^2 \sigma_2^{3k} \sigma_3^2, \tilde{\Delta}^{m})$. 
In \cite[Theorem 5.5]{N}, we computed $\Phi_3(F_1)$ as 
\[
\Phi_3(\mathcal{S}_4(\sigma_1^2 \sigma_2^{3} \sigma_3^2, \tilde{\Delta}^{m}))(v)=3\sum_{i=0}^2 v^{2mi^2}=3+6v^{2m}
\]
in $\mathbb{Z}[v, v^{-1}]/(v^3-1)$. By a similar calculation, we have 
\[
\Phi_3(\mathcal{S}_4(\sigma_1^2 \sigma_2^{3k} \sigma_3^2, \tilde{\Delta}^{m}))(v)=3+6v^{2km}.
\]
Thus, if $k,m \not\equiv 0 \pmod{3}$, then $\Phi_3(F_k) \not\in \mathbb{Z}$; hence Theorem \ref{thm8-2} implies that under the assumption $k,m \not\equiv 0 \pmod{3}$, the torus-covering index $\mathrm{tc}(F_k)=4$. 
\end{proof}

\section*{Acknowledgements}
H.B. was supported by JSPS KAKENHI Grant Number JP25K23338 and Research Fellowship Promoting International Collaboration, The Mathematical Society of Japan. 
I.N. was partially supported by JST FOREST Program, Grant Number JPMJFR202U.


\begin{thebibliography}{0}

\bibitem{AS}
S. Asami, S. Satoh, 
An infinite family of non-invertible surfaces in 4space, 
{\it Bull. Lond. Math. Soc.} {\bf 37}, No. 2 (2005) 285-296.

\bibitem{CJKLS}
J.S. Carter, D. Jelsovsky, S. Kamada, L. Langford, M. Saito, 
Quandle cohomology and state-sum invariants of knotted curves and surfaces, 
{\it Trans. Amer. Math. Soc.} {\bf 355} (2003) 3947--3989.  

\bibitem{CKS}
J.S. Carter, S. Kamada, M. Saito, 
{\it Surfaces in 4-space}, Encyclopaedia of Mathematical Sciences 142, Low-Dimensional Topology III, Berlin, Springer-Verlag, 2004. 

\bibitem{CF}
R.H. Crowell, R.H. Fox, {\it Introduction to Knot Theory}, Ginn and Co., Boston, 1963.

\bibitem{EN}
M. Elhamdadi, S. Nelson, {\it Quandles --- an introduction to the algebra of knots}, Student Mathematical Library, 74. American Mathematical Society, Providence, RI, 2015. 

\bibitem{Fox}
R. H. Fox, A quick trip through knot theory, in {\it Topology of 3-Manifolds} ed. M. K. Fort, Jr. (Prentice-Hall, Englewood Cliffs, NJ, 1962), pp.~120--167.

\bibitem{Lang}
S. Lang, 
{\it Algebraic number theory}, 
Graduate Texts in Mathematics, 110. New York etc.: Springer-Verlag, XIII, 1986.

\bibitem{Joyce}
D. Joyce, A classical invariants of knots, the knot quandle, {\it J. Pure Appl. Algebra} {\bf 23} (1982) 137--160.



\bibitem{Kawauchi}
A. Kawauchi, {\it A Survey of Knot Theory}, 
Birkh\"{a}user Verlag, Basel, 1996. 


\bibitem{KL}
G.~S. Kopp and J.~C. Lagarias, Ray class groups and ray class fields for orders of number fields, {\it Essent. Number Theory} {\bf 4} (2025), no.~1, 1--65. 


\bibitem{Mochizuki}
T. Mochizuki, Some calculations of cohomology groups of finite Alexander quandles, {\it J. Pure Appl. Algebra} {\bf 179} (2003) 287--330.


\bibitem{N}
I. Nakamura,  
Surface links which are coverings over the standard torus, 
 {\it Algebr. Geom. Topol.} {\bf 11} (2011) 1497--1540. 
 
 \bibitem{N2}
I. Nakamura, 
Triple linking numbers and triple point numbers of certain $T^2$-links, 
{\it Topology Appl.} {\bf 159} (2012) 1439--1447.


\bibitem{N5}
I. Nakamura, 
Unknotting numbers and triple point cancelling numbers of torus-covering knots, {\it J. Knot Theory Ramifications} {\bf 22} (2013) 1350010. 


 
\end{thebibliography}
\end{document}